\pgfplotsset{compat=1.16}
\tikzset{
    labl/.style={ rotate=30, inner sep=.5mm}
}
\tikzset{
    labll/.style={ rotate=-30, inner sep=.5mm}
}
\tikzset{
    lab90/.style={ rotate=90, inner sep=.5mm}
}
\numberwithin{equation}{section}
\newtheorem{theorem}{Theorem}[section]
\newtheorem{lemma}[theorem]{Lemma}
\newtheorem{proposition}[theorem]{Proposition}
\newtheorem{definition-theorem}[theorem]{Definition-Theorem}
\newtheorem{definition-proposition}[theorem]{Definition-Proposition}
\theoremstyle{definition}
\newtheorem{definition}[theorem]{Definition}
\newtheorem{remark}[theorem]{Remark}
\newtheorem{example}[theorem]{Example}
\newtheorem{construction}[theorem]{Construction}
\newtheorem{notation}[theorem]{Notation}
\newcommand{\Hom}{\operatorname{Hom}\nolimits}
\newcommand{\RHom}{\mathbf{R}\strut\kern-.2em\operatorname{Hom}\nolimits}
\newcommand{\Image}{\operatorname{Im}\nolimits}
\newcommand{\Hasse}{\operatorname{Hasse}\nolimits}
\DeclareMathOperator{\rep}{\it {\rm rep}_k}
\DeclareMathOperator{\vect}{\mathsf{vect}_{\it k}}
\newcommand{\Int}[1]{\mathbb{I}(#1)}
\newcommand{\smat}[1]{ \left[\begin{smallmatrix} #1 \end{smallmatrix}\right] }
\newcommand{\n}{\emptyset}
\newcommand{\0}{0}
\newcommand{\reltoeq}{\trianglerighteq}
\newcommand{\relto}{\vartriangleright}
\newcommand{\opreltoeq}{\trianglelefteq}
\newcommand{\oprelto}{\vartriangleleft}
\newcommand{\Qf}[1]{\textgoth{#1}}
\def\Intp #1{\expandafter\Intp@i#1\@nil}
\def\Intp@i #1,#2\@nil{[\Qf{#1},\Qf{#2}]}
\def\itoi #1{\expandafter\itoi@i#1\@nil}
\def\itoi@i #1,#2,#3,#4\@nil{_{\Intp{#1,#2}}^{\Intp{#3,#4}}}
\def\tikz@delimiter#1#2#3#4#5#6#7#8{%
  \bgroup
    \pgfextra{\let\tikz@save@last@fig@name=\tikz@last@fig@name}%
    node[outer sep=0pt,inner sep=0pt,draw=none,fill=none,anchor=#1,at=(\tikz@last@fig@name.#2),#3]
    {%
      {\nullfont\pgf@process{\pgfpointdiff{\pgfpointanchor{\tikz@last@fig@name}{#4}}{\pgfpointanchor{\tikz@last@fig@name}{#5}}}}%
      \delimitershortfall\z@% as suggested by morbusg (maximum space not covered by a delimiter = 0)
      \resizebox*{!}{#8}{$\left#6\vcenter{\hrule height .5#8 depth .5#8 width0pt}\right#7$}%
    }
    \pgfextra{\global\let\tikz@last@fig@name=\tikz@save@last@fig@name}%
  \egroup%
}
\newcommand{\mattikz}[1]{
   \begin{tikzpicture}[
      baseline = (p.center), 
      ampersand replacement=\&,
      decoration={
        markings,%  switch on markings
        mark=%  actually add a mark
        at position 0.5
        with{
          \draw[-] (-2pt,-2pt) -- (2pt,2pt);
          \draw[-] (2pt,-2pt) -- (-2pt,2pt);
        }
      }]
      {#1}
    \end{tikzpicture}
}
\newcommand{\matheader}{
  \matrix[matrix of math nodes, column sep=0mm, row sep=0mm,
  inner sep = 0mm, left delimiter = {[},right delimiter = {]},
  every node/.append style={
    anchor=center,text depth = 0.375em,text
    height=0.875em,minimum width=1.25em}](p)
}
\newcommand{\matheaderwide}{
  \matrix[matrix of math nodes, column sep=0mm, row sep=0mm,
  inner sep = 0mm, left delimiter = {[},right delimiter = {]},
  every node/.append style={
    anchor=center,text depth = 0.75em,text
    height=0.875em,minimum width=2.4em}](p)
}
\algnewcommand{\LineComment}[1]{\State \(//\) #1}
\algnewcommand{\IfThenElse}[3]{% \IfThenElse{<if>}{<then>}{<else>}
  \State \algorithmicif\ #1\ \algorithmicthen\ #2\ \algorithmicelse\ #3}
\algnewcommand{\ForDo}[2]
{ %\For{<conditions>}{<dostuff>}
  \State \algorithmicforall\ #1\ \algorithmicdo\ #2
}
\newcommand{\StatexIndent}[1][3]{%
  \setlength\@tempdima{\algorithmicindent}%
  \Statex\hskip\dimexpr#1\@tempdima\relax}
\date{\today}
\begin{document}

\title{Bipath Persistence}

\date{\today}

\author{Toshitaka Aoki}
\address{Toshitaka Aoki,
  Graduate School of Human Development and Environment, Kobe University, 3-11 Tsurukabuto, Nada-ku, Kobe 657-8501 Japan}
\email{toshitaka.aoki@people.kobe-u.ac.jp}

\author{Emerson G.\ Escolar}
\address{Emerson G.\ Escolar,
  Graduate School of Human Development and Environment, Kobe University, 3-11 Tsurukabuto, Nada-ku, Kobe 657-8501 Japan}
\email{e.g.escolar@people.kobe-u.ac.jp}

\author{Shunsuke Tada}
\address{Shunsuke Tada,
  Graduate School of Human Development and Environment, Kobe University, 3-11 Tsurukabuto, Nada-ku, Kobe 657-8501 Japan}
\email{205d851d@stu.kobe-u.ac.jp}

\thanks{MSC2020: 55N31, 68W30, 16G20, 16Z05}
\thanks{Keywords: Persistence modules, Bipath persistence, Interval modules, Computational topology, Algorithms.}

\begin{abstract}
  In persistent homology analysis,
  interval modules play a central role in describing the birth and death of topological features across a filtration.
  In this work, we extend this setting, and propose the use of \emph{bipath persistent homology},
  which can be used to study the persistence of topological features
  across a pair of filtrations connected at their ends, to compare the two filtrations.
  In this setting, interval-decomposability is guaranteed,
  and we provide an algorithm for computing persistence diagrams for bipath persistent homology
  and discuss the interpretation of bipath persistence diagrams.
\end{abstract}

\maketitle
%\tableofcontents

\section{Introduction}

Persistent homology \cite{edelsbrunner2002topological, carlsson2005computing} has been successfully applied in diverse domains, such as material science \cite{hiraoka2016hierarchical}, 
evolutionary biology \cite{carlsson2013evolution}, 
and others \cite{sousbie1_2011persistent,sousbie2_2011persistent,aktas2019persistence}.
Let us recall how persistent homology arises in a typical way. 
For point cloud data,
one can construct a filtration of simplicial complexes
% $ S= \{S_{a} \subseteq  S_{b} \}_{a \leq b \in [n]}$ displayed by
\[
S\colon\  S_1\subseteq S_2 \subseteq \cdots \subseteq S_n
\]
% where $[n]:=\{1<\cdots< n\}$ a totally ordered set.
for example by using the \v{C}ech complex, the Vietoris-Rips complex, or the alpha complex construction.
More generally, one can consider a filtration of topological spaces.
% The filtration provides the persistence of topological features in the point cloud data under the filtration by intervals of the form $[b,d+1) =[b,d] \subseteq [n]$, where $b$ and $d$ are considered the birth and death of topological features respectively.
Applying the $q$th degree homology functor $H_q(-;k)$ (with coefficient field $k$)
to the filtration % $S= \{S_{a} \subseteq  S_{b} \}_{a \leq b \in [n]}$,
$S$,
we obtain a \emph{persistence module} 
% $H_q(S;k)= \{H_q(S_a ;k) \to H_q (S_b;k)\}_{a  \leq b  \in [n]}$ displayed by
\[
 H_q(S;k)\colon \  H_q(S_1 ;k) \to H_q(S_2;k) \cdots \to  H_q(S_n ;k) 
\]
where each linear map $H_q(S_a ;k) \to H_q (S_b;k)$
is induced by the inclusion map $S_a \xhookrightarrow{} S_b$.
% Algebraically, we can understand them by the objects of the functor category from poset category $[n]$ to the category of finite dimensional $k$-vector spaces. 
% We denote the functor category by $\rep([n])$. Gabriel’s theorem \cite{Gabriel72} asserts that every indecomposable module in $\rep([n])$ is an interval module. Especially,
A structure theorem (Gabriel’s theorem \cite{Gabriel72}, etc.) guarantees that
the persistence module $H_q(S;k)$ decomposes into interval modules.
Going back to the filtration, these intervals $[b,d)$ are then interpreted
as describing some topological feature's persistence (born at $b$, dies at $d$).
Thus, the 
interval modules (and interval-decomposition) play a central role in describing the birth and death of topological features
(e.g.\ connected components, rings, cavities, etc) of data, and is visualized as a  \emph{persistence diagram}.

In order to accomodate different diagrams of spaces arising from data,
the above standard setting has been naturally generalized in various ways,
for example, zigzag persistence \cite{carlsson2010zigzag}
or multiparameter persistence \cite{carlsson2009theory}.
It is a fact that zigzag persistence modules are also guaranteed to admit interval-decomposability, 
while multiparameter persistence modules are not in general.
For their motivation and applications, see
\cite{carlsson2009zigzag,tausz2011applications,myers2023temporal,mcdonald2023topological}
and \cite{carlsson2009theory,botnan2022introduction}, respectively, for example.

In this work, we propose a new method for studying the persistence of topological features
across a family of spaces along a bipath, or a pair of filtrations connected at their ends, as below:
\begin{equation*}
  S\colon \ 
  \begin{tikzcd}[row sep=0.1em,column sep = 1.4em, inner sep=0pt]
    & S_1 \rar[hookrightarrow] & S_2 \rar[hookrightarrow] & \cdots \rar[hookrightarrow] & S_n \ar[dr,hookrightarrow] & \\
    S_{\hat0} \ar[ur,hookrightarrow] \ar[dr,hookrightarrow] & & & &  & S_{\hat1} \\
    & S_{1'} \rar[hookrightarrow] & S_{2'} \rar[hookrightarrow] & \cdots \rar[hookrightarrow] & S_{m'} \ar[ur,hookrightarrow] &
  \end{tikzcd}.
\end{equation*}
We call such a diagram of spaces a \emph{bipath filtration}.
From its definition,
a bipath filtration
can be thought of as two filtrations    
sharing the same starting space $S_{\hat0}$
and ending space $S_{\hat1}$ but going through possibly different processes in the middle.

This leads us to consider persistence modules over bipath posets, or \emph{bipath persistence modules} for short.
We provide three motivations to consider bipath persistence modules.

First, and from a theoretical point of view,
the bipath persistence modules are a special class of persistence modules
that arises naturally when considering interval-decomposability.
This follows from the fact (see for example \cite[Theorem 1.3]{aoki2023summand})
that a bipath persistence module is guaranteed to be interval-decomposable,
and thus we can easily define its persistence diagram.

In fact, the result of \cite[Theorem 1.3]{aoki2023summand}, states
that for a finite connected poset $P$, every $P$-persistence module is interval decomposable
is equivalent to $P$ being either an $A$-type poset or a bipath poset.
Noting that
persistence modules over an $A$-type poset are exactly the zigzag persistence modules,
the result \cite[Theorem 1.3]{aoki2023summand}
says that apart from zigzag persistence (including classical persistence),
the only other setting for finite posets for which interval-decomposability is guaranteed is our setting of bipaths.
Thus, our second reason for considering bipaths is
that it fills a gap in the literature in exploiting interval-decomposability.

Finally, and perhaps most importantly,
bipath persistence modules can be used to study the persistence of topological features
across a pair of filtrations connected at their ends, to compare the two filtrations.
This setting can be compared to the setting of a filtration over a commutative ladder \cite{escolar2016persistence},
which connects two filtrations at every point. However, persistence modules over commutative ladders can be
very complicated for sufficiently long ladders \cite[Theorem~4]{escolar2016persistence},
in contrast to the bipath persistence modules which are always interval-decomposable.
Furthermore, bipath persistence modules are naturally obtained by restriction of multiparameter persistence modules
(see Example \ref{ex:embedding_bipath}), and thus can be thought of as a simpler version of multiparameter persistence.

In this paper, we propose methods for decomposing bipath persistence modules.
In Section~\ref{Section:Dim=0},
we provide a straightforward way for computing 
a decomposition by changing the bases of vector spaces consisting bipath persistence modules.
In Section~\ref{section:matrixmethod},
we provide our main algorithm, which uses the idea of so-called matrix problems methods
(see for example \cite[Chapter 1]{gabriel1992representations}, \cite{asashiba2019matrix})
to compute a decomposition.
We use column and row operations as in elementary linear algebra,
but only certain operations are deemed permissible as determined by the structure of
homomorphisms between intervals.
Using these permissible operations, we find a normal form corresponding to an interval decomposition of the input bipath persistence module.
Assuming that the bipath persistence module is already in a specific form,
or can be easily transformed to a such a form,
say for example starting from a bipath filtration of simplicial complexes,
the algorithm in Section~\ref{section:matrixmethod}
is more efficient (see the last part of Subsection~\ref{subsec:algosummary}).
Nevertheless,
we provide the method in Section~\ref{Section:Dim=0}
because of its simplicity and the fact that it 
highlights a connection with zigzag persistence. 

In addition, thanks to interval-decomposability we are able to define persistence diagrams for bipath persistence modules, which we call \emph{bipath persistence diagram}. 
We remark that our bipath persistence diagram naturally includes the standard persistence diagram. 
We define it in Definition~\ref{def:barcodeV}
and provide examples in Section \ref{sec:bipathPD}.

\subsection{Related work}

In \cite{burghelea2011persistence}, persistence of circle valued maps, which can be understood as quiver representations over a bipartite quiver of type $\Tilde{A}$, is studied. In the paper, algorithms for the decomposition of those representations are proposed. We remark that bipath persistence modules and quiver representations over $\Tilde{A}$ have different algebraic structures, and therefore, their algorithm is not applicable as-is to our bipath persistence modules.

On the other hand, all indecomposable modules over bipath posets are classified up to isomorphisms by using string combinatorics and it is shown that they are exactly interval modules by \cite{aoki2023summand} (see \cite{erdmann2006blocks} for basic knowledge of string algebras). 
In contrast to our method (and purpose) of this paper, such a classification result does not imply giving algorithms of decomposition into indecomposable modules.

%%% Local Variables:
%%% mode: latex
%%% TeX-master: "main"
%%% End:

\section{Preliminaries}
In this section we recall the notion of persistence modules. 
In particular, we study persistence modules over $A$-type posets and bipath posets
which naturally arise from the point of view of interval-decomposability (Theorem~\ref{thm:intervaldecomposable_iff}),
and discuss their persistence diagrams.

\subsection{Persistence modules}\label{sec:pers_modules}
We recall basic definitions of persistence modules  \cite{botnan2020decomposition, bubenik2014categorification, chazal2013structure}.  
Let $k$ be a field. 
Let $P$ be a (not necessarily finite) poset equipped with a partial order $\leq$. 
We regard a given poset $P$ as a category in such a way that its objects are elements of $P$ and 
there exists a unique morphism from $a$ to $b$ whenever $a\leq b$. 
For a poset $P$, we denote the category of (covariant) functors from $P$ to the category $\vect$ of finite dimensional $k$-vector spaces by $\rep(P)$, which is abelian. 
Each object of $\rep(P)$ is called a \emph{$P$-persistence module}. 
More explicitly, objects and morphisms of this category are given as follows. 
A $P$-persistence module $V$ is a correspondence such that
\begin{itemize}
\item it associates each element $a \in P$ to a finite dimensional $k$-vector space $V(a)$, and
\item it associates each relation $a\leq b$ in $P$ to a $k$-linear map $V(a \leq b)\colon V(a) \to V(b)$ such that 
  $V(a \leq a)=1_{V(a)}$ and 
  $V(b\leq c)\circ V(a\leq b)=V(a\leq c)$.
\end{itemize}

For two $P$-persistence modules $V$ and $W$, 
a morphism $f\colon V \to W$ is a family of $k$-linear maps $f_a : V(a) \to W(a)$ indexed by $a \in P$ and satisfying 
$f_b \circ V(a \leq b) = W(a \leq b)\circ f_a$ 
for every relation $a\leq b$ in $P$, that is, the following diagram is commutative.
\[
\xymatrix{
    V(a) \ar[rr]^-{V(a \leq b)} \ar[d]^{f_a}
    & &V(b) \ar[d] ^{f_b}
    \\
    W(a) \ar[rr]^-{W(a \leq b)}
    & \ar@{}[u]|{\circlearrowright} &W(b)
}
\]
A morphism $f$ is called an \emph{isomorphism} if
all the $f_a$'s are isomorphisms of $k$-vector spaces.
If there is an isomorphism $f: V\rightarrow W$, we write $V\cong W$. 

A \emph{direct sum} $V\oplus W$ of two $P$-persistence modules $V$ and $W$ is provided by 
\begin{equation*}
    (V\oplus W)(a) := V(a) \oplus W(a) \quad \text{and} \quad 
    (V\oplus W)(a\leq b) := 
    \begin{bmatrix}
        V(a\leq b) & 0 \\ 
        0 & W(a\leq b)  
    \end{bmatrix}. 
\end{equation*}
We say that $V$ is \emph{decomposable} if it can be written as a direct sum of non-zero objects, and \emph{indecomposable} otherwise. 
The Krull-Schmidt theorem asserts that every $P$-persistence module decomposes into indecomposable objects uniquely, up to isomorphism and order of summands.

In the rest of this subsection, let $P$ be a finite poset. 
We denote by $\Hasse(P)$ the \emph{Hasse quiver}  of $P$, that is, a quiver whose set of vertices is $P$, and having an arrow $a \to b$ for $a,b\in P$ if and only if $a < b$ and there is no $z\in P$ such that $a < z < b$. 
We remark that any finite poset is determined by its Hasse quiver. We also remark that it suffices to specify the vector spaces $V(a)$ for all $a \in P$ and the linear maps $V(a \leq b)$ for arrows $a \to b$ in $\Hasse(P)$ to uniquely specify a $P$-persistence module $V$.

Now, we define intervals an interval modules over finite posets. 

\begin{definition}\label{def:conv_interval}
    Let $I\subseteq P$ be a subset. 
    We say that $I$ is 
    \begin{itemize}
        \item \emph{convex} if, for any $a, b \in I$ and $z \in P$, $a < z < b$ implies $z \in  I$. 
        \item \emph{connected} if, for any $a, b \in I$, there exists a sequence $a=z_0,z_1.\ldots,z_l=b$ of elements of $I$ such that $z_{i-1}$ and $z_{i}$ are comparable for every $i\in \{1,\ldots,l\}$.  
        \item \emph{interval} if it is convex and connected in $P$. 
    \end{itemize}
    We denote by $\mathbb{I}(P)$ the set of intervals of $P$. 
\end{definition}

\begin{definition}\label{def:intervalmod}
    For an interval $I\in \mathbb{I}(P)$, 
    let $k_I$ be a $P$-persistence module defined as follows.
    \begin{equation}
        k_I(a) =
        \begin{cases}
        k & \text{if $a\in I$}, \\
        0 & \text{otherwise,}
        \end{cases}\quad \quad
        k_I(a\leq b) =
        \begin{cases}
        1_k & \text{if $a,b\in I$}, \\
        0 & \text{otherwise.}
        \end{cases}
    \end{equation}

    A $P$-persistence module $M$ is said to be \emph{interval} if $M\cong k_I$ for some interval $I \in \mathbb{I}(P)$, 
    and \emph{interval-decomposable} if it is isomorphic to a direct sum of finite copies of some interval modules.
    Clearly, all interval modules are indecomposable. 
\end{definition}

In \cite{aoki2023summand}, they classified finite posets whose persistence module are always interval-decomposable. 
This is done by using $A$-type posets and bipath posets defined as follows.   

\begin{definition} \label{def:ab_posets}
\begin{enumerate}[\rm (1)]
    \item For an integer $n>0$, let $A_n(a)$ be a poset whose Hasse quiver is of the form 
\begin{equation} \label{eq:A-type}
    A_n(a)\colon \ 1 \longleftrightarrow 2  \longleftrightarrow \cdots \cdots \longleftrightarrow n,
\end{equation}
where $\leftrightarrow$ is either $\rightarrow$ or $\leftarrow$ as assigned by the orientation $a$. 
We call them \emph{$A$-type posets}. 

The \emph{equioriented} $A$-type poset $A_n(e)$ is provided by an orientation 
    \begin{equation} \label{eq:equioriented}
       A_n(e)\colon \  1 \longrightarrow 2 \longrightarrow \cdots \cdots \longrightarrow n. 
    \end{equation}
\item For two integers $n,m>0$, let $B_{n,m}$ be a poset given by adding two distinguished points $\hat{0}$ and $\hat{1}$ to a disjoint union of $A_n(e)$ and $A_m(e)$ so that $\hat{0}$ (resp., $\hat{1}$) is a global minimum (resp., maximum). 
By definition, the Hasse quiver of $B_{n,m}$ is given by
\begin{equation}\label{eq:HasseB}
B_{n,m}\colon \ 
\begin{tikzcd}[row sep=0.1em, column sep = 1.2em, inner sep=0pt]
& 1  \rar & 2  \rar & \cdots \rar & n \ar[dr] & \\
\hat{0} \ar[ur] \ar[dr] & & & &  & \hat{1} \\
& 1' \rar & 2' \rar & \cdots \rar & m' \ar[ur] &
\end{tikzcd}    
\end{equation}
where we label elements of $A_{n}(e)$ (resp., $A_{m}(e)$) by $1,\ldots,n$ (resp., $1',\ldots,m'$). 
We call $B_{n,m}$ a \emph{bipath poset}. 
\end{enumerate}
\end{definition}

\begin{theorem}{\rm \cite[Theorem 1.3]{aoki2023summand}} 
\label{thm:intervaldecomposable_iff}
    For a finite connected poset $P$,   
    the following conditions are equivalent.
    \begin{enumerate}[\rm (a)]
        \item Every $P$-persistence module is interval-decomposable. 
        \item $P$ is either an $A$-type poset or a bipath poset. 
    \end{enumerate}
\end{theorem}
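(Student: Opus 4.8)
The plan is to prove the two implications of the equivalence separately; in the direction (b)$\Rightarrow$(a) I would treat $A$-type posets and bipath posets by rather different means, and in (a)$\Rightarrow$(b) I would argue by contraposition. First, let $P=A_n(a)$. Since $\Hasse(P)$ is a path, hence a tree, the incidence algebra $kP$ carries no relations and is the path algebra of a Dynkin quiver of type $\mathbb{A}_n$. By Gabriel's theorem \cite{Gabriel72} its indecomposable modules are in bijection with the positive roots of $\mathbb{A}_n$, which are exactly the $0/1$ vectors supported on a contiguous segment $\{i,\dots,j\}$; the corresponding indecomposable is the interval module $k_{[i,j]}$. Hence every $A_n(a)$-persistence module is interval-decomposable.

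Next, let $P=B_{n,m}$. Here $kP$ is not hereditary: its quiver is $\Hasse(B_{n,m})$, a cycle, and it carries the single commutativity relation identifying the two full branch-paths $\hat{0}\rightsquigarrow\hat{1}$. I would first observe that $kB_{n,m}$ is a \emph{special biserial} algebra---every vertex of $\Hasse(B_{n,m})$ has in- and out-degree at most $2$ (only $\hat{0}$ and $\hat{1}$ attaining $2$), and along each arrow at most one composite survives the relation---and then invoke the classification of indecomposable modules over special biserial algebras (string modules, band modules, and finitely many biserial projective--injectives), carrying out string combinatorics adapted to the non-monomial commutativity relation. Concretely: (i) no band module occurs, since a band would be a reduced cyclic walk, hence would traverse the whole Hasse cycle and contain both full branch-paths, which the associated string-algebra quotient excludes; (ii) the unique exceptional (non-string) biserial projective--injective is $k_{B_{n,m}}$, which is an interval module; and (iii) every string module has support a proper sub-arc of the cyclic Hasse quiver omitting both full branch-paths---exactly the constraint the commutativity relation imposes---and each such sub-arc is a convex connected subset, i.e.\ an interval, of $B_{n,m}$. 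Together these give that every indecomposable is an interval module; this is the technical heart of the direction (b)$\Rightarrow$(a).

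For (a)$\Rightarrow$(b): given a convex connected subposet $I\subseteq P$, extension by zero is an exact, fully faithful functor $\rep(I)\to\rep(P)$---convexity of $I$ is precisely what makes it a functor---and, since any interval of $P$ contained in $I$ restricts to an interval of $I$, it sends non-interval indecomposables to non-interval indecomposables. So it suffices to show that a finite connected poset $P$ that is neither $A$-type nor bipath contains a convex connected subposet admitting a non-interval indecomposable, and I would split on the shape of $\Hasse(P)$. If every vertex has degree at most $2$, then $\Hasse(P)$ is a single path or a single cycle: a path forces $P$ to be $A$-type; a cycle with exactly one source (equivalently one sink) decomposes into two directed paths and forces $P$ to be a bipath poset; and a cycle with at least two sources admits neither a shortcut edge nor a pair of parallel paths (either would force a unique source and sink), so $kP$ is the path algebra of an acyclic orientation of a cycle, a tame hereditary algebra of type $\widetilde{\mathbb{A}}$, and has infinitely many, hence non-interval, indecomposables. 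If instead some vertex $a$ has at least three incident arrows, one exhibits a convex connected subposet whose incidence algebra is the path algebra of $\mathbb{D}_4$ (or $\widetilde{\mathbb{D}}_4$): this is immediate when $a$ has three Hasse-neighbours on the same side (take $a$ together with those neighbours inside the convex order ideal or filter generated by $a$), and otherwise one replaces the naive $\mathbb{D}_4$-shaped set by its convex hull and iterates, reducing to a finite list of critical convex connected subposets checked individually. Since $k\mathbb{D}_4$ has an indecomposable of dimension vector $(2,1,1,1)$ with the $2$ at the branch vertex, which is not thin and hence not an interval module, this yields the required non-interval indecomposable; and in the path and one-source-cycle cases $P$ would already be $A$-type or bipath, contrary to hypothesis.

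I expect the main obstacle to be part (iii) of the bipath case: running the string/band machinery cleanly when the defining ideal is generated by the non-monomial commutativity relation, so as to prove rigorously that this relation excludes every string, band, or biserial indecomposable beyond the interval modules. A secondary difficulty is the bookkeeping in the high-degree-vertex case of (a)$\Rightarrow$(b), where extra comparabilities in $P$ can destroy convexity of the naive $\mathbb{D}_4$-shaped subposet and force the convex-hull and critical-subposet analysis.
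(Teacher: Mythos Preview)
The paper does not give its own proof of this theorem: it is stated with a citation to \cite[Theorem~1.3]{aoki2023summand} and used as a black box (see also the Related Work paragraph, which says the cited reference classifies the indecomposables over bipath posets ``using string combinatorics''). So there is no proof here to compare your proposal against; what you have written is a sketch of how the cited result itself might be established.

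On the substance of your sketch: the direction (b)$\Rightarrow$(a) is sound. The $A$-type case via Gabriel is standard, and for the bipath case your identification of $kB_{n,m}$ as special biserial is correct (vertex degrees are at most $2$, and condition (SB2) holds since only $\hat{0}$ and $\hat{1}$ branch and they are a source and a sink), so the Butler--Ringel/Wald--Waschb\"usch classification applies. Your points (i)--(iii) correctly explain why only interval modules survive: the associated string algebra kills both full branch-paths, so no cyclic reduced walk (band) exists; the unique non-uniserial projective-injective is $k_{B}$ itself; and the remaining strings are proper sub-arcs of the Hasse cycle, i.e.\ intervals. This matches the approach the paper attributes to \cite{aoki2023summand}.

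The direction (a)$\Rightarrow$(b) has a genuine soft spot where you yourself flag it. The cycle-with-$\geq 2$-sources case is fine (your counting argument---finitely many intervals, infinitely many indecomposables for tame hereditary $\widetilde{\mathbb{A}}$---is clean). But in the degree-$\geq 3$ case with mixed neighbours (say two covers $b_1,b_2$ and a cocover $c$ of $a$), the set $\{c,a,b_1,b_2\}$ need not be convex: any $z\neq a$ with $c<z<b_i$ breaks it, and passing to the convex hull can produce a subposet whose incidence algebra is no longer a path algebra of $\mathbb{D}_4$. Your proposed remedy (``take the convex hull and iterate, reducing to a finite list of critical convex subposets checked individually'') is a strategy, not an argument; carrying it out is exactly the combinatorial bookkeeping that makes this direction nontrivial, and you should expect it to require a genuine case analysis rather than a one-line reduction.
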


\subsection{Persistence diagrams} 
\label{sec:bipath_modules}

In this section, we discuss the persistence diagrams which give a practical tool for persistent homology.

\subsubsection{Zigzag persistence modules}
Firstly, we study an $A$-type poset $A := A_n(a)$, where $a$ is an arbitrary orientation. 
$A$-persistence modules are known as \emph{zigzag persistence modules}.  
From its definition, intervals of $A$ are exactly the subsets of the form $[b,d]:=\{b,b+1,\ldots,d\}$ for some $1\leq b \leq d \leq n$. Namely, 
\begin{equation}\label{eq:Atype_intervals}
    \mathbb{I}(A) = \{[b,d] \mid 1\leq b \leq d \leq n\}. 
\end{equation}

For $1\leq b \leq d \leq n$, the interval $[b,d]$ and the corresponding interval module $k_{[a,b]}$ can be described in the following picture, where $\leftrightarrow$ is either $\rightarrow$ or $\leftarrow$ as assigned by the orientation $a$.
\begin{eqnarray*}
\begin{tikzpicture}
    \coordinate (b) at (0,0);
    \coordinate (d) at (6,0);
    \coordinate (x) at (1,0);
    \coordinate (z) at (0,-0.8); 
    \coordinate (dd) at ($(d)+ -1*(x)$);
    \draw[line width = 0.25mm] ($(b) + -2*(x)$)--($(dd) + 2*(x)$); 
    \node (lcdots) at ($(b) + -2.5*(x)$) {$\cdots$}; 
    \node (rcdots) at ($(dd) + 2.5*(x)$) {$\cdots$}; 
    \node at ($(b)+(-0,0)$) [above]{\small$b$}; 
    \node at (dd) [above]{\small$d$}; 
    \draw[line width = 0.5mm] (b)--(dd); 
    \fill[] (b)circle(0.8mm);
    \fill[] (dd)circle(0.8mm);
    \node (p) at ($(b) + (-4.2,0)$) {$[b,d]$:};
    \node (pp) at ($(b) + (-4.2,0) + (z)$) {$k_{\mbox{\small$[b,d]$}}$:};
    \node (bk) at ($(b)+(z)$) {$k$}; 
    \node (dk) at ($(d)+(z)$) {$0$}; 
    \node (d-1k) at ($(d) + -1*(x) + (z)$) {$k$}; 
    \node (b-1k)  at ($(b)+ -1*(x) + (z)$) {$0$};
    \node (b+1k)  at ($(b)+ (x) + (z)$) {$k$};
    \node (d+1k) at ($(d) + (x) + (z)$) {}; 
    \node (b+2k)  at ($(b)+ 2*(x) + (z)$) {};
    \node (d-2k) at ($(d) + -2*(x)+(z)$) {$k$}; 
    \node (d-3k) at ($(d) + -3*(x)+(z)$) {}; 
    \node (c) at ($(bk)!0.5!(d-1k)$) {$\cdots$}; 
    \node (d+2k) at ($(d) + 2*(x)+(z)$) {}; 
    \node (b-2k) at ($(b) + -2*(x)+(z)$) {}; 
    \node (d+3k) at ($(dd) + 2.5*(x) +(z)$) {$ \cdots$}; 
    \node (b-3k) at ($(b) + -2.5*(x) +(z)$) {$\cdots$}; 
    
    \draw[<->, ] (d-1k)--(dk);
    \draw[<->, ] (b-1k)--(bk);
    \draw[<->, ] (dk)--(d+1k);
    \draw[<->, ] (b-2k)--(b-1k);
    %\draw[->, ] (d+1k)--(d+2k);
    
    \draw[<->, ] (bk)--node[above]{\footnotesize$1_k$}(b+1k);
    \draw[<->, ] (d-2k)--node[above]{\footnotesize$1_k$}(d-1k);
    \draw[<->, ] (b+1k)--node[above]{\footnotesize$1_k$}(b+2k);
    \draw[<->, ] (d-3k)--node[above]{\footnotesize$1_k$}(d-2k);
\end{tikzpicture}
\end{eqnarray*}

%That is, we put the $1$-dimensional vector spaces $k$ from $b$ to $d$, the identity map $1_k$ between them, and $0$ elsewhere. 
As we mentioned before, every $A$-persistence module $V$ is interval-decomposable. Thus, there exists an isomorphism 
\begin{equation} \label{eq:str_Atype}
    V \cong \bigoplus_{I\in \mathbb{I}(A)} k_{I}^{m(I)} 
\end{equation}
in $\rep(A)$, where $m(I)$ denotes the multiplicity of the interval module $k_I$ as its summands. Notice that such a decomposition is unique up to isomorphism and order of summands.
In this case, we define the \emph{persistence diagram} of $V$ by a multiset 
\begin{equation*}
    \{\text{$I \in \mathbb{I}(A)$ with multiplicity $m(I)$}\}.  
\end{equation*}
When $A$ is equioriented, we call it the \emph{standard persistence diagram}.  

Using the description \eqref{eq:Atype_intervals}, 
this admits a realization in the plane $\mathbb{R}^2$ by
(see \cite{carlsson2010zigzag,otter2017roadmap,oudot2017persistence} for example)
\begin{equation*}
    \{\text{$(a,b) \in \mathbb{R}^2$ for $I=[a,b]$ with multiplicity $m(I)$}\}. 
\end{equation*}

\subsubsection{Bipath persistence modules} \label{sec:bipath_pd}
Next, we study persistence modules over bipath posets (or \emph{bipath persistence modules} for short). 
Let $B:=B_{n,m}$ be a bipath poset as in Definition~\ref{def:ab_posets}(2) for fixed $n,m>0$. 
In terms of its Hasse quiver, we recall that $V\in \rep(B)$ is determined by a diagram of vector spaces and linear maps  
\[    
    \begin{tikzcd}[row sep=0.1em, column sep=4.5em, inner sep=0pt]
      & V(1) \rar{V(1\leq 2)} & V(2) \rar{V(2\leq 3)} & \cdots \rar{V(n-1\leq n)} & V(n) \ar[dr]{}{V(n\leq \hat{1})} & \\ 
      V(\hat{0}) \ar[ur]{}{V(\hat{0} \leq 1)} 
      \ar[dr]{}{V(\hat{0}\leq 1')} & & & & & V(\hat{1}) \\    
      & V(1') \rar{V(1'\leq 2')} & V(2') \rar{V(2'\leq 3')} & \cdots \rar{V(m-1' \leq m')} & V(m') \ar[ur]{}{V(m'\leq \hat{1})} &
    \end{tikzcd}
\]
where they need to satisfy the commutative relation
\begin{equation}\label{eq:commutativity}
    V(n\leq \hat{1}) \cdots V(1\leq 2) V(\hat{0}\leq 1) =
    V(m'\leq \hat{1}) \cdots V(1'\leq 2') V(\hat{0}\leq 1'). 
\end{equation}

To define its persistence diagram, 
we begin with enumerating all intervals over $B$. 
Now, let $U$ and $D$ be the upper path and the lower path of $B$ respectively.
\begin{equation}\label{eq:UandD}
    U \colon \ \hat{0} \to 1 \to 2 \to \cdots \to n \to \hat{1} \quad \text{and} \quad     
    D \colon \ \hat{0} \to 1' \to 2' \to \cdots \to m' \to \hat{1}. 
\end{equation}

\begin{definition} \label{def:LRUD}
With the above notation, we define the following sets of intervals of $B$. 
\begin{itemize}
    \item The set $\mathbb{L}(B)$ consists of 
    $[\hat{0},t] \cup [\hat{0},s]$ for some $t\in U$ and $s\in D$ with $s,t\neq \hat{1}$. 
    \item The set $\mathbb{R}(B)$ consists of 
    $[s, \hat{1}] \cup [t, \hat{1}]$ for some $s\in U$ and $t\in D$ with $s,t\neq \hat{0}$. 
    \item The set $\mathbb{U}(B)$ consists of 
    $[s, t]$ for some $s,t\in U$ with $s,t\not \in \{\hat{0},\hat{1}\}$. 
    \item The set $\mathbb{D}(B)$ consists of 
    $[t,s]$ for some $s,t\in D$ with $s,t\not \in \{\hat{0},\hat{1}\}$. 
\end{itemize}
Here, the symbol $[a,b]$ means the interval considered in the $A$-type poset $U$ or $D$. 
If an interval $J$ of $B$ is given by one of the above form using $s$ and $t$, then we write $J= \langle s,t \rangle$. 
\end{definition}

Our notation $\langle s,t \rangle$ for intervals can be understood 
as the walk from $s$ to $t$ in a clockwise direction in the Hasse quiver of $B$. 
See Table~\ref{tab:intervalsB}. 
For instance, the interval $[t,s]$ of the lower path $D$ corresponds to the interval $\langle s,t \rangle$ of $B$.
We explain our reasoning for this in Remark~\ref{rem:interpret}, after introducing the persistence diagram.

\begin{table}[h]
\renewcommand{\arraystretch}{1.5}
    \begin{tabular}{cccccccc}
        $B$ & \hspace{-2mm}$\mathbb{L}(B)$ & \hspace{-2mm}$\mathbb{R}(B)$ & \hspace{-2mm} $\mathbb{U}(B)$& \hspace{-2mm}$\mathbb{D}(B)$  \\ 
        \begin{tikzpicture}[baseline = 0mm, scale = 0.9]
        \coordinate (x) at (0.4,0);
        \coordinate (y) at (0,0.6); 
        \coordinate (0) at ($0*(x) + 0*(y)$);
        \coordinate (1) at ($1*(x) + 1*(y) - 0.1*(x)$);
        \coordinate (n) at ($5*(x) + 1*(y)$);
        \coordinate (11) at ($6*(x) + 0*(y) - 0.1*(x)$);
        \coordinate (1') at ($1*(x) + -1*(y) - 0.1*(x)$);
        \coordinate (m') at ($5*(x) + -1*(y)$);
        \draw (0)--(1)--(n)--(11)--(m')--(1')--cycle;
        \node[left] at (0) {\footnotesize$\hat{0}$};
        \node[right] at (11) {\footnotesize$\hat{1}$}; 
        \node[above] at (1) {\footnotesize$1$}; 
        \node[above] at (n) {\footnotesize$n$}; 
        \node[below] at (1') {\footnotesize$1'$}; 
        \node[below] at (m') {\footnotesize$m'$}; 
        \draw[line width = 0.7mm] (0)--(1)--(n)--(11)--(m')--(1')--cycle;
        \end{tikzpicture} 
        & \hspace{-4mm}
        \begin{tikzpicture}[baseline = 0mm, scale =0.9]
        \coordinate (x) at (0.4,0);
        \coordinate (y) at (0,0.6); 
        \coordinate (0) at ($0*(x) + 0*(y)$);
        \coordinate (1) at ($1*(x) + 1*(y) - 0.1*(x)$);
        \coordinate (n) at ($5*(x) + 1*(y)$);
        \coordinate (11) at ($6*(x) + 0*(y) - 0.1*(x)$);
        \coordinate (1') at ($1*(x) + -1*(y) - 0.1*(x)$);
        \coordinate (m') at ($5*(x) + -1*(y)$);
        \draw (0)--(1)--(n)--(11)--(m')--(1')--cycle;
        
        \coordinate (s) at ($3*(x) + -1*(y)$);
        \coordinate (t) at ($3*(x) + 1*(y)$);
        \fill (s)circle(0.7mm)node[below]{$s$};
        \fill (t)circle(0.7mm)node[above]{$t$};
        \draw[line width = 0.7mm] (s)--(1')--(0)--(1)--(t);
        \node[left] at (0) {\footnotesize$\hat{0}$};
        \node[right] at (11) {\footnotesize$\hat{1}$}; 
        \node[above] at (1) {\footnotesize$1$}; 
        \node[above] at (n) {\footnotesize$n$}; 
        \node[below] at (1') {\footnotesize$1'$}; 
        \node[below] at (m') {\footnotesize$m'$}; 
        
        \end{tikzpicture} 
        & \hspace{-4mm}
        \begin{tikzpicture}[baseline = 0mm, scale =0.9]
        \coordinate (x) at (0.4,0);
        \coordinate (y) at (0,0.6); 
        \coordinate (0) at ($0*(x) + 0*(y)$);
        \coordinate (1) at ($1*(x) + 1*(y) - 0.1*(x)$);
        \coordinate (n) at ($5*(x) + 1*(y)$);
        \coordinate (11) at ($6*(x) + 0*(y) - 0.1*(x)$);
        \coordinate (1') at ($1*(x) + -1*(y) - 0.1*(x)$);
        \coordinate (m') at ($5*(x) + -1*(y)$);
        \draw (0)--(1)--(n)--(11)--(m')--(1')--cycle;
        
        \coordinate (s) at ($3*(x) + 1*(y)$);
        \coordinate (t) at ($3*(x) + -1*(y)$);
        \fill (s)circle(0.7mm)node[above]{$s$};
        \fill (t)circle(0.7mm)node[below]{$t$};
        \draw[line width = 0.7mm] (s)--(n)--(11)--(m')--(t);
        \node[left] at (0) {\footnotesize$\hat{0}$};
        \node[right] at (11) {\footnotesize$\hat{1}$}; 
        \node[above] at (1) {\footnotesize$1$}; 
        \node[above] at (n) {\footnotesize$n$}; 
        \node[below] at (1') {\footnotesize$1'$}; 
        \node[below] at (m') {\footnotesize$m'$}; 
        
        \end{tikzpicture} 
        & \hspace{-4mm}
        \begin{tikzpicture}[baseline = 0mm, scale =0.9]
        \coordinate (x) at (0.4,0);
        \coordinate (y) at (0,0.6); 
        \coordinate (0) at ($0*(x) + 0*(y)$);
        \coordinate (1) at ($1*(x) + 1*(y) - 0.1*(x)$);
        \coordinate (n) at ($5*(x) + 1*(y)$);
        \coordinate (11) at ($6*(x) + 0*(y) - 0.1*(x)$);
        \coordinate (1') at ($1*(x) + -1*(y) - 0.1*(x)$);
        \coordinate (m') at ($5*(x) + -1*(y)$);
        \draw (0)--(1)--(n)--(11)--(m')--(1')--cycle;
        
        \coordinate (s) at ($2*(x) + 1*(y)$);
        \coordinate (t) at ($4*(x) + 1*(y)$);
        \fill (s)circle(0.7mm)node[above]{$s$};
        \fill (t)circle(0.7mm)node[above]{$t$};
        \draw[line width = 0.7mm] (s)--(t);
        \node[left] at (0) {\footnotesize$\hat{0}$};
        \node[right] at (11) {\footnotesize$\hat{1}$}; 
        \node[above] at (1) {\footnotesize$1$}; 
        \node[above] at (n) {\footnotesize$n$}; 
        \node[below] at (1') {\footnotesize$1'$}; 
        \node[below] at (m') {\footnotesize$m'$}; 
        
        \end{tikzpicture} 
        & \hspace{-4mm}
        \begin{tikzpicture}[baseline = 0mm, scale =0.9]
        \coordinate (x) at (0.4,0);
        \coordinate (y) at (0,0.6); 
        \coordinate (0) at ($0*(x) + 0*(y)$);
        \coordinate (1) at ($1*(x) + 1*(y) - 0.1*(x)$);
        \coordinate (n) at ($5*(x) + 1*(y)$);
        \coordinate (11) at ($6*(x) + 0*(y) - 0.1*(x)$);
        \coordinate (1') at ($1*(x) + -1*(y) - 0.1*(x)$);
        \coordinate (m') at ($5*(x) + -1*(y)$);
        \draw (0)--(1)--(n)--(11)--(m')--(1')--cycle;
        
        \coordinate (s) at ($4*(x) + -1*(y)$);
        \coordinate (t) at ($2*(x) + -1*(y)$);
        \fill (s)circle(0.7mm)node[below]{$s$};
        \fill (t)circle(0.7mm)node[below]{$t$};
        \draw[line width = 0.7mm] (t)--(s);
        \node[left] at (0) {\footnotesize$\hat{0}$};
        \node[right] at (11) {\footnotesize$\hat{1}$}; 
        \node[above] at (1) {\footnotesize$1$}; 
        \node[above] at (n) {\footnotesize$n$}; 
        \node[below] at (1') {\footnotesize$1'$}; 
        \node[below] at (m') {\footnotesize$m'$}; 
        
        \end{tikzpicture} 
    \end{tabular}
    \caption{All intervals of $B$. 
    %Here, each hexagon describes to the Hasse quiver \eqref{eq:HasseB}. 
    The left most one is $B$ itself. For each 
    $\mathbb{X} \in \{\mathbb{L},\mathbb{R},\mathbb{U},\mathbb{D}\}$, 
    we describe intervals $J=\langle s,t \rangle\in \mathbb{X}(B)$ by thick lines. 
    The corresponding interval modules are given by putting 
    the $1$-dimensional vector spaces $k$ on $J$, 
    the identity map $1_k$ between them, and $0$ elsewhere. 
    }
    \label{tab:intervalsB}
\end{table}

Under the above notation, we have the following result. 

\begin{proposition} \label{prop:str_bipathmodule}
Let $B:= B_{n,m}$ be a bipath poset. 
\begin{enumerate}[\rm (1)]
    \item We have 
    $\mathbb{I}(B) = \{B\} \sqcup \mathbb{L}(B) \sqcup \mathbb{R}(B)\sqcup \mathbb{U}(B)\sqcup \mathbb{D}(B)$. 
    \item For any $V\in \rep(B)$, we have a decomposition 
    $V = V_{B} \oplus V_{\mathbb{L}} \oplus V_{\mathbb{R}} \oplus V_{\mathbb{U}}\oplus V_{\mathbb{D}}$, 
    where 
    \begin{equation*}
        V_{B} \cong k_B^{m(B)} \quad \text{and} \quad 
        V_{\mathbb{X}} \cong \bigoplus_{J\in \mathbb{X}(B)} k_J^{m(J)} \ \ 
        \text{for 
        $\mathbb{X}\in 
        \{\mathbb{L},\mathbb{R},\mathbb{U},\mathbb{D}\}.$}
    \end{equation*}
\end{enumerate}
\end{proposition}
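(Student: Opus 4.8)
The plan is to treat the two parts in order: part (1) is a purely combinatorial classification of the convex connected subsets of the Hasse quiver of $B = B_{n,m}$, and part (2) follows formally from part (1) together with Theorem~\ref{thm:intervaldecomposable_iff} and Krull--Schmidt.

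For part (1) I would argue in three steps. \emph{Step 1: each listed set consists of intervals.} For $J\in\mathbb{L}(B)\cup\mathbb{R}(B)$ connectedness is clear because $J$ is a union of two chains meeting at $\hat 0$ or $\hat 1$, and for $J\in\mathbb{U}(B)\cup\mathbb{D}(B)\cup\{B\}$ it is a single chain (or all of $B$); convexity in every case rests on the elementary observation that for comparable $a<b$ in $B$, the elements $z$ with $a<z<b$ all lie on a single path $U$ or $D$ \emph{unless} $a=\hat 0$ or $b=\hat 1$, in which cases one checks the finitely many possibilities directly. \emph{Step 2: pairwise disjointness.} This is read off from three bits of data: whether the subset contains $\hat 0$, whether it contains $\hat 1$, and --- to separate $\mathbb{U}(B)$ from $\mathbb{D}(B)$ --- whether its (nonempty) portion away from $\hat 0,\hat 1$ sits on the upper or the lower path. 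So $\{B\}$, $\mathbb{L}(B)$, $\mathbb{R}(B)$, $\mathbb{U}(B)$, $\mathbb{D}(B)$ are pairwise disjoint.

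\emph{Step 3} is the substantive one: every $I\in\mathbb{I}(B)$ occurs in the list. I would split on whether $\hat 0\in I$ and whether $\hat 1\in I$. If both hold, convexity forces $I=B$, since every other element $z$ of $B$ satisfies $\hat 0<z<\hat 1$. If neither holds, then $I$ is contained in $(U\setminus\{\hat 0,\hat 1\})\cup(D\setminus\{\hat 0,\hat 1\})$, and because no interior point of $U$ is comparable to an interior point of $D$, connectedness forces $I$ to lie entirely on one path; being convex and connected there, it is a contiguous sub-chain, hence lies in $\mathbb{U}(B)$ or $\mathbb{D}(B)$. If $\hat 0\in I$ but $\hat 1\notin I$, I would show $I\cap U$ and $I\cap D$ are each down-closed sub-chains through $\hat 0$: the only elements of $B$ lying strictly between $\hat 0$ and a point $j\neq\hat 1$ of $U$ are $1,\dots,j-1$ on $U$, so convexity gives $I\cap U=[\hat 0,t]$ and likewise $I\cap D=[\hat 0,s]$ with $s,t\neq\hat 1$; since $B=U\cup D$ as vertex sets, $I=[\hat 0,t]\cup[\hat 0,s]\in\mathbb{L}(B)$. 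The case $\hat 1\in I$, $\hat 0\notin I$ is symmetric and yields $\mathbb{R}(B)$. Combining Steps 1--3 gives $\mathbb{I}(B)=\{B\}\sqcup\mathbb{L}(B)\sqcup\mathbb{R}(B)\sqcup\mathbb{U}(B)\sqcup\mathbb{D}(B)$.

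For part (2), Theorem~\ref{thm:intervaldecomposable_iff} applies since $B$ is a bipath poset, so any $V\in\rep(B)$ satisfies $V\cong\bigoplus_{I\in\mathbb{I}(B)}k_I^{m(I)}$, with the multiplicities $m(I)$ well defined by Krull--Schmidt. Partitioning the index set as in part (1), I would collect the summands into five blocks, set $V_B:=k_B^{m(B)}$ and $V_{\mathbb{X}}:=\bigoplus_{J\in\mathbb{X}(B)}k_J^{m(J)}$ for $\mathbb{X}\in\{\mathbb{L},\mathbb{R},\mathbb{U},\mathbb{D}\}$, and read off $V\cong V_B\oplus V_{\mathbb{L}}\oplus V_{\mathbb{R}}\oplus V_{\mathbb{U}}\oplus V_{\mathbb{D}}$. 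I do not expect a genuine obstacle here; the only place that needs real care is Step 3 (and the convexity check in Step 1), namely handling comparabilities that pass through the shared extrema $\hat 0,\hat 1$ and the degenerate intervals such as singletons and chains of the form $[\hat 0,t]$ --- all of which the ``strictly-between'' observation is meant to dispatch uniformly.
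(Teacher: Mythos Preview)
Your proof is correct. The paper's own proof consists solely of the sentence ``It is immediate from \cite[Proposition~5.5 and Corollary~5.7]{aoki2023summand}'', so it outsources both the enumeration of intervals and the interval-decomposability to an external reference. You instead give a direct, self-contained argument: for part~(1) you run the four-way case split on membership of $\hat0$ and $\hat1$, which is exactly the right organizing principle and handles all degenerate cases (e.g.\ the singleton $\{\hat0\}$ sits in $\mathbb{L}(B)$ as $t=s=\hat0$) without difficulty; for part~(2) you invoke Theorem~\ref{thm:intervaldecomposable_iff} and Krull--Schmidt, which is precisely what the cited corollary does. The advantage of your route is that it makes the paper more self-contained and shows that part~(1) is elementary combinatorics requiring no representation theory, while the paper's citation keeps the exposition shorter at the cost of a dependency. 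There is no gap.
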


\begin{proof}
    It is immediate from \cite[Proposition~5.5 and Corollary~5.7]{aoki2023summand}. 
\end{proof}

Thanks to interval-decomposability, we can define a persistence diagram for every bipath persistence module as follows. 

\begin{definition}\label{def:barcodeV}
    Suppose that a bipath persistence module $V$ is decomposed as in Proposition~\ref{prop:str_bipathmodule}(2). 
    The \emph{persistence diagram} of $V$ is defined to be a multiset 
    \begin{eqnarray*}
        \mathcal{D}(V) &:=& \{\text{$J\in \mathbb{I}(B)$ with multiplicity $m(J)$}\} \\ 
        &=& \mathcal{D}(V_{B}) \sqcup \mathcal{D}(V_{\mathbb{L}}) \sqcup \mathcal{D}(V_{\mathbb{R}}) \sqcup \mathcal{D}(V_{\mathbb{U}}) \sqcup \mathcal{D}(V_{\mathbb{D}}). 
    \end{eqnarray*}
    % Notice that this definition is independent of the decomposition. 
\end{definition}

It is natural to ask a visualization of persistence diagrams of bipath persistence modules similar to the case of $A$-type posets. 
To do this, we adopt the following convention for it in this paper.  

\bigskip \noindent
\underline{{\bf Visualization of persistence diagrams.}} 
We describe the persistence diagram $\mathcal{D}(V)$ of a given bipath persistence module $V$ using Figure \ref{fig:bipathPD}. 
Namely, we use the diagram provided by the following procedure: 
\begin{itemize} 
\item Plot points at the upper left corner of the Figure \ref{fig:bipathPD} with multiplicity $m(B)$, to correspond to the intervals $B$.

\item Plot points $(s,t)$ with multiplicity $m(\langle s,t \rangle)$ to correspond to the intervals $\langle s,t \rangle$.
\end{itemize}
We will denote it by $p(\mathcal{D}(V))$. 
By definition, it can be divided into $5$ components as 
\begin{equation}%\label{eq:bipath_Vpd}
        p(\mathcal{D}(V)) = p(\mathcal{D}(V_{B})) \sqcup p(\mathcal{D}(V_{\mathbb{L}})) \sqcup p(\mathcal{D}(V_{\mathbb{R}})) \sqcup p(\mathcal{D}(V_{\mathbb{U}})) \sqcup p(\mathcal{D}(V_{\mathbb{D}})),  
    \end{equation}
where the corresponding regions are explained and visualized in Figure~\ref{fig:bipathPD}. 

\begin{figure}[htbp]
\begin{center}
\includegraphics[width=120mm]{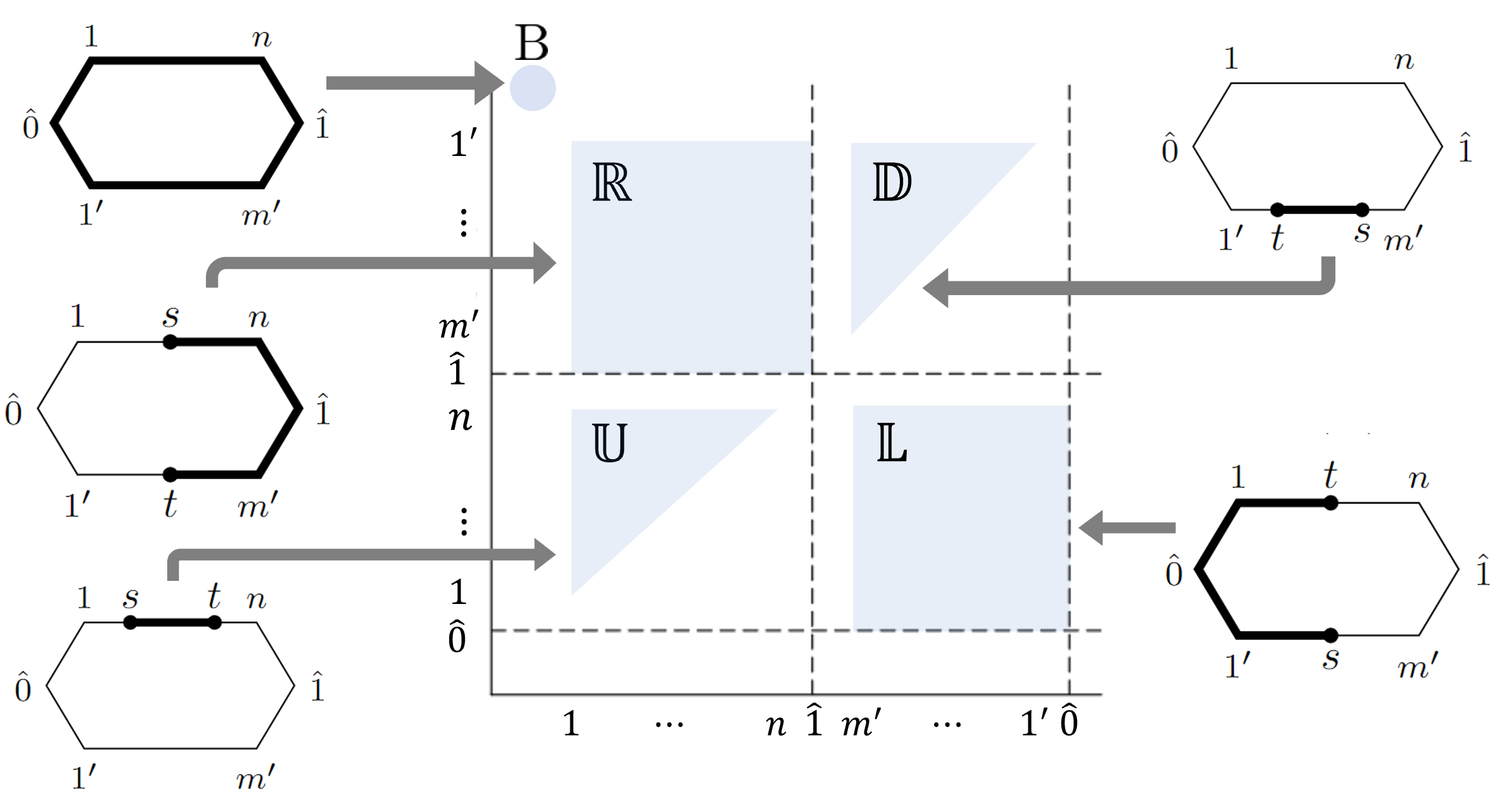}
\caption{A diagram for a visualization of persistence diagrams. 
The interval $B$ corresponds to region $\rm B$ (i.e., the point) in the upper left corner, 
and each interval $\langle s,t \rangle\in \mathbb{X}(B)$
corresponds to a point $(s,t)$ in region $\mathbb{X}$
for $\mathbb{X} \in \{ \mathbb{L},\mathbb{R},\mathbb{U},\mathbb{D}\}$. 
The points in the  other regions (in white) do not correspond to any intervals. 
Each point has to be considered with the multiplicity when we visualize a given persistence diagram. 
We note that region $\mathbb{U}$ can be thought as a standard persistence diagram. On the other hand, the interval denoted $\langle s, t \rangle$ associated to the point $(s,t)$ in region $\mathbb{D}$
corresponds to a birth $t$ and death $s$ in the lower path.
This notation for $\mathbb{D}$ is opposite to the one for the standard persistence diagrams,
and we explain our reasoning in Remark~\ref{rem:interpret}(1).}
\label{fig:bipathPD}
\end{center}
\end{figure}

\begin{remark}
  \label{rem:interpret}
  We provide the following observations on interpreting bipath persistence diagrams,
  together with some reasoning explaining why we laid out our visualization as we have.
  \begin{enumerate}
  \item In each region, points to the left and/or above correspond to intervals with longer lengths.

    Assuming we fix the ordering for the points of the upper path as $1,2,\hdots,n$,
    this forces us to order the points of the lower path as $m',\hdots,2',1'$ for this to hold.
    Once this ordering is fixed,
    this forces us to denote the intervals $[t,s]$ of the lower path by $\langle s, t \rangle$ as we have,
    to have this hold for region $\mathbb{D}$.
    
  \item Across regions, the following regions can be thought of as ``connected''
    (taken to mean the existence of pairs of intervals with lengths differing by just $1$).
    First, $\mathbb{U}$ and $\mathbb{R}$ (and $\mathbb{R}$ and $\mathbb{D}$) are connected across their shared boundaries
    in Figure~\ref{fig:bipathPD}.
    Furthermore, the left boundary of $\mathbb{U}$ and the right boundary of $\mathbb{L}$ are connected,
    and lower boundary of $\mathbb{L}$ and the upper boundary of $\mathbb{D}$ are connected.
    Finally, the interval $B$ is connected to the upper left corner of $\mathbb{R}$,
    and also to the upper left corner of $\mathbb{L}$.
    
    That is, removing the interval $B$, the bipath persistence diagram naturally lives on a torus,
    with the valid regions forming a cycle
    $ \cdots \relbar \mathbb{U} \relbar \mathbb{R} \relbar \mathbb{D} \relbar \mathbb{L} \relbar \cdots $.
    For example, starting from an interval in $\mathbb{U}$,
    we can progressively lengthen it to the right until it reaches $\hat1$ and extends to the lower path.
    We can then start shortening it from the left until it is in $\mathbb{D}$. Then, lengthening it again, we can make it reach
    $\hat0$ and beyond, giving an interval in $\mathbb{L}$. Shortening it again gives an interval in $\mathbb{U}$; and so on.
  \end{enumerate}
\end{remark}

In Section~\ref{sec:bipathPD}, we will compute the persistence diagram together with its realization for a small example. 

\subsection{Bipath filtrations}
\label{sec:bipath_filtrations}
For a poset $P$, we say that a \emph{$P$-filtration} is a functor $S \colon P \to \mathsf{Top}$ such that $S(a\leq b) \colon S(a) \hookrightarrow S(b)$ is an inclusion map for every $a\leq b$ in $P$, where $\mathsf{Top}$ is the category of topological spaces. 
Applying the $q$th homology functor to $S$ (with coefficient field $k$), we obtain a $P$-persistence module $H_q(S;k)$ if it is pointwise finite (i.e., $H_q(S;k)(a)$ is finite dimension for all $a\in P$). 
That is, 
$H_q(S;k)(a):=H_q(S(a);k)$ is the $q$th homology group of the space $S(a)$, and 
each linear map $H_q(S;k)(a\leq b) \colon H_q(S(a);k) \to H_q(S(b);k)$ is induced by the inclusion map $S(a) \hookrightarrow S(b)$. 

Among others, a $B_{n,m}$-filtration (a bipath filtration) 
can be illustrated as follows:
\begin{equation*}
  S\colon 
  \begin{tikzcd}[row sep=0.1em,column sep = 1.4em, inner sep=0pt]
    & S_1 \rar[hookrightarrow] & S_2 \rar[hookrightarrow] & \cdots \rar[hookrightarrow] & S_n \ar[dr,hookrightarrow] & \\
    S_{\hat0} \ar[ur,hookrightarrow] \ar[dr,hookrightarrow] & & & &  & S_{\hat1} \\
    & S_{1'} \rar[hookrightarrow] & S_{2'} \rar[hookrightarrow] & \cdots \rar[hookrightarrow] & S_{m'} \ar[ur,hookrightarrow] &
  \end{tikzcd}.
\end{equation*}

%%$q$th persistence diagram 

If $H_q(S;k)$ is pointwise finite, then we obtain $\mathcal{D}_q(S):= \mathcal{D}(H_q(S;k))$ and call it the \emph{$q$th bipath persistence diagram} of $S$ (with coefficient field $k$). 
Notice that it admits a realization $p(\mathcal{D}(S))$ in the plane 
as in Section~\ref{sec:bipath_pd}.

Now, we explain how bipath persistence modules (and hence bipath persistence diagrams) arise from $P$-filtrations, including multiparameter filtrations (the case $P=\mathbb{R}^d$). 

\begin{example}\label{ex:embedding_bipath}
Let $P$ be a poset and $B:=B_{n,m}$ a bipath poset for some $n,m>0$. 
Regarding as a functor, an order embedding $f\colon B\hookrightarrow P$ naturally induces a functor $f^{\ast} \colon \rep(P)\to \rep (B)$ called the \emph{restriction functor}, where $f^{\ast}(V)\in \rep(B)$ is defined by   
\begin{equation*}
    f^{\ast}(V)(a) := V(f(a)) \quad \text{and} \quad 
    f^{\ast}(V)(a\leq b) := V(f(a)\leq f(b))
\end{equation*}
for any $P$-persistence module $V$. 

Let $S\colon P\to \mathsf{Top}$ be a $P$-filtration. 
On one hand, applying the $q$th degree homology functor, one obtains a $P$-persistence module $H_q(S;k)$ whenever it is pointwise finite. 
Then, it gives a bipath persistence module $f^{\ast}(H_q(S;k))$ via the restriction functor. 
On the other hand, we have a bipath filtration $S\circ  f\colon B\to \mathsf{Top}$ as a composition. 
Then, it induces a bipath persistence module $H_q(S\circ f;k)$ by taking homology. 
Since both constructions are functorial, we have an isomorphism 
\begin{equation*}
    f^{\ast}(H_q(S;k)) \cong H_q(S\circ f;k)     
\end{equation*}
in $\rep(B)$. Finally, we obtain its persistence diagram via the interval-decomposition. 

The above construction yields large family of $q$th bipath persistence diagrams starting from a given $P$-filtration $S$ by changing order embeddings.
For more on invariants defined using order embeddings, see \cite{amiot2024invariants}.
\end{example}

%%% Local Variables:
%%% mode: latex
%%% TeX-master: "main"
%%% End:

\section {Decomposition of bipath persistence modules} 
\label{Section:Dim=0}
In this section, we study the structure of bipath persistence modules. 
As a result, we give a conceptually simple method for the decomposition. 
In particular, we will give a construction of submodules which provide an internal direct sum for it. 
Note that in Section~\ref{section:matrixmethod} we give a potentially more efficient algorithm for computing decompositions using ideas from matrix problems.

Throughout this section, let $V$ be a bipath persistence module over $B:=B_{n,m}$. 
We recall from Proposition~\ref{prop:str_bipathmodule}(2) that $V$ is decomposed as $V=V_B\oplus V_{\mathbb{L}} \oplus V_{\mathbb{R}} \oplus V_{\mathbb{U}} \oplus V_{\mathbb{D}}$. 
Thus, we describe each summand in order to obtain the (interval-)decomposition. 

Our strategy is the following. 

\begin{itemize}
\item In Subsection~\ref{sec:first_step}, 
we will construct submodules $Y,W \subseteq V$ and prove that 
they provide an internal direct sum $V = Y\oplus W$ such that 
$Y=V_B$. 
Furthermore, we compute the multiplicity $m(B)$ of $k_B$ by 
$m(B) = {\rm rank} V(\hat{0}\leq \hat{1})$ in this case.

\item In Subsection~\ref{sec:second_step}, 
we decompose a complement $W$ in the previous step. 
In fact, we will construct submodules $W',W'',Z \subseteq W$ 
and prove that they satisfy $W = W'\oplus W''\oplus Z$, $W' = W_{\mathbb{L}} = V_{\mathbb{L}}$ and 
$W'' = W_{\mathbb{R}} = V_{\mathbb{R}}$. 
Notice that constructions of $W'$ and $W''$ are the dual to each other. 

\item The remaining part $Z$ satisfies $Z(\hat{0}) = Z(\hat{1})=0$. 
Then, we find that the restriction $Z'$ and $Z''$ of $Z$ to the upper path $U$ and the lower path $D$ in \eqref{eq:UandD} coincides with 
$Z_{\mathbb{U}} = V_{\mathbb{U}}$ and $Z_{\mathbb{D}} = V_{\mathbb{D}}$ respectively. 
%Notice that they can be regarded as persistence modules over the equioriented $A$-type quivers. 
\end{itemize}

Consequently, we obtain a decomposition 
\begin{equation}\label{eq:VtoLRUD}
    V=V_B\oplus V_{\mathbb{L}} \oplus V_{\mathbb{R}} \oplus V_{\mathbb{U}} \oplus V_{\mathbb{D}} = Y\oplus W'\oplus W''\oplus Z' \oplus Z''.
\end{equation}
It is worth mentioning that each summand $W'$, $W''$, $Z'$, $Z''$ can be regarded as zigzag persistence modules. 
Hence, one can use methods for zigzag persistence modules in order to decompose them.

\subsection{A construction of $V_B$ and its complement}
\label{sec:first_step} 
For a bipath persistence module $V$, we will construct submodules $Y$ and $W$ with the desired property. 

If we have $V(\hat{0} \leq \hat{1})=0$, 
then let $Y := 0$ and $W := V$. 
Otherwise, we define $Y$ and $W$ in the following way. 
By our assumption, we have a non-zero linear map 
\begin{equation*}
V(\hat{0}\leq \hat{1})\colon V(\hat{0}) \to V(\hat{1}). 
\end{equation*}
We regard its image $N := \Image V(\hat{0} \leq \hat{1})$ as a subspace of $V(\hat{0})$ via the retraction $\iota \colon N \to V(\hat{0})$ such that $V(\hat{0}\leq \hat{1}) \circ \iota = 1_{N}$. 
Then, let $Y$ be a submodule of $V$ provided by 
$Y(a):= V(\hat{0}\leq a)(N)$ for all $a\in B$. 
Moreover, we take a complement $\overline{Y}(a)$ of $Y(a)$ in $V(a)$ as  $k$-vector spaces for every $a\in B$. 
Then, let $W$ be a $B$-persistence module defined by $W(a) := \overline{Y}(a)$ and $W(a\leq b) := \pi_b \circ V(a\leq b) \circ \iota_a$, where 
$\iota_a \colon W(a) \to V(a)$ and $\pi_a \colon V(a) \to W(a)$ are canonical inclusions and projections respectively. 
By definition, for every arrow $a\to b$ in $\Hasse(B)$, 
we have the following commutative diagram of vector spaces and linear maps
\begin{equation}\label{eq:repmat1}
\begin{split}
\xymatrix@C=70pt@R=35pt{ 
V(a) 
    \ar[r]^{V(a\leq b)} \ar@{=}[d] & 
V(b) 
    \ar@{=}[d] \\ 
Y(a) \oplus W(a)
    \ar[r]^-{\begin{bsmallmatrix} 
    Y(a\leq b) &  g_{a,b} \\ 0 & W(a\leq b) \\ 
    \end{bsmallmatrix}} 
    \ar@{}[ur]|\circlearrowleft
& 
Y(b) \oplus W(b). 
}
\end{split}
\end{equation}
Thus, we obtain an exact sequence 
\begin{equation}\label{eq:exactseq1}
    0\to Y \to V \to W \to 0    
\end{equation} 
in $\rep(B)$.

Our claim is the following. 

\begin{proposition}\label{prop:basechange1}
    We have an automorphism $\rho \colon V \xrightarrow{\sim} V$ such that $\sigma(Y) = Y$ and $V = Y \oplus \rho(W)$ in $\rep(B)$. 
    In particular, the exact sequence \eqref{eq:exactseq1} splits. 
    Moreover, we have $Y \cong k_B^{{\rm rank} V(\hat{0}\leq \hat{1})}$ and $W(\hat{0}\leq \hat{1})=0$. 
\end{proposition}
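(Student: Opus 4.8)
The plan is to produce an explicit automorphism $\rho$ of $V$ that straightens out the off-diagonal maps $g_{a,b}$ appearing in \eqref{eq:repmat1}, thereby splitting \eqref{eq:exactseq1}. First I would observe that, by construction, $Y$ is the submodule generated by $N \subseteq V(\hat0)$ under the structure maps out of $\hat0$, and that $Y(\hat0\le\hat1)$ is an isomorphism $Y(\hat0)\xrightarrow{\sim} Y(\hat1)$ (both are identified with $N$ via $\iota$ and $V(\hat0\le\hat1)\circ\iota = 1_N$). The key point is that $V(\hat0\le\hat1)$ factors through $Y$: since $N = \Image V(\hat0\le\hat1)$ and $Y(\hat1) = V(\hat0\le\hat1)(N) = N$ (using that $V(\hat0\le\hat1)|_N$ is the identity on $N$), we get $\Image V(\hat0\le\hat1) = Y(\hat1)$, so $W(\hat0\le\hat1) = \pi_{\hat1}\circ V(\hat0\le\hat1)\circ\iota_{\hat0} = 0$. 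This already disposes of the last assertion $W(\hat0\le\hat1)=0$, and $Y\cong k_B^{\rank V(\hat0\le\hat1)}$ follows because $\dim N = \rank V(\hat0\le\hat1)$ and $Y$ is, at every vertex, the image of $N$ under a composite of structure maps each of which restricts to an isomorphism onto its image of $N$ along the unique directed walk from $\hat0$ — so every $Y(a)$ has dimension $\dim N$ and every $Y(a\le b)$ is an isomorphism, i.e.\ $Y$ is a direct sum of $\dim N$ copies of $k_B$.

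Next I would build $\rho$. Working along the Hasse quiver, I want to replace the complement $W(a)\hookrightarrow V(a)$ by a corrected complement $W'(a)$ so that the structure maps become block-diagonal. Because $B$ has the two directed paths $U$ and $D$ from $\hat0$ to $\hat1$, and $Y(a\le b)$ is invertible for every arrow, one can define $\rho$ vertex by vertex: set $\rho_{\hat0} = 1$, and having defined a suitable $\rho_a$ propagate along each arrow $a\to b$ using the invertibility of $Y(a\le b)$ to kill $g_{a,b}$. Concretely, on $Y(b)\oplus W(b)$ one uses the automorphism $\begin{bsmallmatrix}1 & h_b\\ 0 & 1\end{bsmallmatrix}$ for an appropriate $h_b\colon W(b)\to Y(b)$; the commutativity square forces a recursion $h_b = Y(a\le b)h_a + g_{a,b}$ along arrows, i.e.\ $h_b$ accumulates the $g$'s transported by the invertible $Y$-maps. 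The only place this could conflict is at $\hat1$, where two such recursions (one along $U$, one along $D$) must agree — this is where the commutativity relation \eqref{eq:commutativity} for $V$ enters, guaranteeing that the two accumulated corrections at $\hat1$ coincide, so $h_{\hat1}$ is well defined. Having the $h_a$, set $\rho_a = \begin{bsmallmatrix}1 & -h_a\\ 0 & 1\end{bsmallmatrix}$ (with respect to $V(a) = Y(a)\oplus W(a)$); this is an automorphism of each $V(a)$, fixes $Y$ pointwise, is compatible with structure maps by the recursion, hence is an automorphism $\rho\colon V\xrightarrow{\sim} V$ with $\rho(Y) = Y$ and with $\rho(W)$ now a genuine submodule complement, giving $V = Y\oplus\rho(W)$ and splitting \eqref{eq:exactseq1}.

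The main obstacle I expect is the consistency check at $\hat1$: one must verify that transporting the off-diagonal data $g_{a,b}$ along the upper path and along the lower path produces the same map $W(\hat1)\to Y(\hat1)$, so that $\rho_{\hat1}$ is well defined. This is not automatic from the local commutativity squares alone — it genuinely uses \eqref{eq:commutativity}. A clean way to handle it is to avoid the arrow-by-arrow recursion and instead define $h_a$ directly by a closed formula: identify $Y(a)$ with $N$ via the (invertible) transport along, say, the unique walk from $\hat0$, and characterize $\rho(W)(a)$ as $\{\,w - s_a(w) : w\in W(a)\,\}$ where $s_a\colon W(a)\to Y(a)$ is chosen so that $V(\hat0\le a)$ applied to the corrected complement lands correctly; then the two-path ambiguity is absorbed into the single well-defined subspace $Y(a) = V(\hat0\le a)(N)$, and one only needs to check that the corrected complements $\{\rho(W)(a)\}_a$ are closed under all structure maps, which reduces to a direct computation using \eqref{eq:repmat1}. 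I would also note $\sigma$ in the statement is a typo for $\rho$. The remaining verifications — that $\rho_a$ is bijective, that $\rho$ commutes with structure maps, and that $Y(a)\cap\rho(W)(a) = 0$ with $Y(a)+\rho(W)(a) = V(a)$ — are routine linear algebra once the subspaces are in hand.
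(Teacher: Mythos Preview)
Your overall strategy coincides with the paper's: produce upper-triangular automorphisms $\rho_a = \begin{bsmallmatrix}1 & *\\ 0 & 1\end{bsmallmatrix}$ at each vertex so that the off-diagonal blocks $g_{a,b}$ vanish, and invoke \eqref{eq:commutativity} to reconcile the two paths at the branch point. The paper does this via Lemma~\ref{lem:delete_ast}, changing basis at the \emph{source} of each arrow (which uses exactly the invertibility of the outgoing $Y(a\le b)$), working backward toward $\hat0$, and leaving the single arrow $\hat0\to 1'$ for commutativity to handle.

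Your explicit forward recursion $h_b = Y(a\le b)h_a + g_{a,b}$, however, is not well-typed: the right-hand side is a map $W(a)\to Y(b)$, whereas $h_b$ must be a map $W(b)\to Y(b)$. The actual condition for $\rho(W)$ to be a submodule (with $\rho_a=\begin{bsmallmatrix}1 & -h_a\\0&1\end{bsmallmatrix}$) is $h_b\circ W(a\le b) = Y(a\le b)\circ h_a - g_{a,b}$, and this does \emph{not} determine $h_b$ from $h_a$, since $W(a\le b)$ need be neither injective nor surjective. What does work is the \emph{backward} recursion $h_a = Y(a\le b)^{-1}\bigl(h_b\, W(a\le b) + g_{a,b}\bigr)$ starting from $h_{\hat1}=0$; this is where the invertibility of $Y(a\le b)$ actually enters. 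The consistency check is then at $\hat0$, not $\hat1$: one must show that the two values of $h_{\hat0}$ obtained along $U$ and along $D$ agree. A short telescoping computation gives $Y(\hat0\le\hat1)\,h_{\hat0}$ equal to the off-diagonal block of the composite $V(\hat0\le\hat1)$ written in block form, and that block is path-independent by \eqref{eq:commutativity}. This is precisely the direction the paper takes (phrased via elementary operations rather than an explicit $h$). With this reversal your plan goes through; the alternative closed-formula sketch you offer is too vague as stated and would require the same correction to be made precise. Your observation that $\sigma$ is a typo for $\rho$ is correct.
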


Therefore, we may take $Y = V_B$ in the decomposition of $V$. 
The automorphism $\rho$ in the statement can be obtained by using the following elementary result in linear algebra. 

\begin{lemma}[see {\cite[Section~3.5]{Artin91}} for example]
\label{lem:delete_ast}
Suppose that
    \begin{equation}\label{eq:matseq}
U_1\oplus U_2 
\xrightarrow{
\left[
\begin{matrix}
S  &  M \\ 
0 & T \\ 
\end{matrix}
\right]
}
U_1'\oplus U_2' 
\xrightarrow{
\left[
\begin{matrix}
S' & M' \\ 
0 & T' \\ 
\end{matrix}
\right]
}
U_1''\oplus U_2''
\end{equation}
is a sequence of linear maps given by block matrices. 
If the number of rows of the matrix $S'$ equals its rank, then the matrix $M'$ can be reduced to the zero matrix by applying the following operation finitely many times: 
Now, let $u_1,\ldots,u_s$ be a basis of $U_1'$. 
\begin{itemize} 
    \item If $v_1,\ldots,v_t$ is a basis of $U_2'$, then for some $1\leq i \leq s$, $1\leq j \leq t$ and a scalar $\lambda \in k$, replace an element $v_j$ with $v_j + \lambda u_i$ to obtain a new basis of $U_2'$. 
\end{itemize}

%\begin{itemize} 
%    \item {\color{blue} Add scalar multiples of the elements of the basis of $U'_1$ to the basis of $U'_2$.}
%\end{itemize}
Notice that the above operation corresponds to the elementary column (resp., row) operations of the right (resp., left) block matrix in \eqref{eq:matseq}.
In particular, it does not change the matrices $S$, $T$, $S'$, and $T'$. 
\end{lemma}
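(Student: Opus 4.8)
The plan is to reduce the statement to a one-line observation about the column space of $S'$, and then to make precise how the prescribed change of basis acts on the matrices in \eqref{eq:matseq}.

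First I would pin down the effect of the permitted operation. Replacing the basis vector $v_j$ of $U_2'$ by $v_j + \lambda u_i$ is the change of basis of $U_1'\oplus U_2'$ given by the elementary unitriangular matrix $P = \begin{bmatrix} I & \lambda E \\ 0 & I \end{bmatrix}$, where $E$ is the $(\dim U_1')\times(\dim U_2')$ matrix with a single $1$ in position $(i,j)$. Since this is a change of the \emph{source} basis of the right-hand map and of the \emph{target} basis of the left-hand map, the right block matrix gets right-multiplied by $P$ and the left block matrix gets left-multiplied by $P^{-1} = \begin{bmatrix} I & -\lambda E \\ 0 & I \end{bmatrix}$. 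Computing these two products directly gives: on the right, $S'$ and $T'$ are unchanged, and the only change is that $\lambda$ times the $i$-th column of $S'$ is added to the $j$-th column of $M'$ (no other column of $M'$ is touched, and $T'$ is untouched because the lower-left block of the right matrix is zero); on the left, $S$ and $T$ are unchanged and only $M$ is affected. This step already justifies the parenthetical remark about elementary column/row operations and the final sentence of the statement.

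Next I would invoke the hypothesis: the number of rows of $S'$ equals $\operatorname{rank} S'$ exactly means $S'\colon U_1'\to U_1''$ is surjective, i.e.\ the columns of $S'$ span $U_1''$. Writing $m'_1,\dots,m'_t$ for the columns of $M'$ and $s'_1,\dots,s'_s$ for the columns of $S'$, for each $j$ there are scalars $\lambda_{1j},\dots,\lambda_{sj}\in k$ with $m'_j = \sum_{i=1}^{s}\lambda_{ij}\,s'_i$. Then, fixing $j$ and applying for $i=1,\dots,s$ the operation with scalar $-\lambda_{ij}$ (that is, replacing the current $v_j$ by $v_j - \lambda_{ij} u_i$), the first step shows that after these $s$ operations the $j$-th column of $M'$ has become $m'_j - \sum_{i=1}^{s}\lambda_{ij}s'_i = 0$, while $S'$, $T'$, and every other column of $M'$ are unchanged. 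Doing this for $j=1,\dots,t$ uses at most $st$ operations, hence finitely many, and reduces $M'$ to the zero matrix.

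The argument is essentially bookkeeping, so the crux is really the first step: one must check carefully that the prescribed basis change induces exactly the column operation ``add a multiple of a column of $S'$ to a column of $M'$'' on the right matrix, while leaving $S'$, $T'$ (and, on the left, $S$, $T$) intact. Once that is verified, the fact that the columns of $M'$ can then be cleared one after another without interfering with one another or with the blocks to be preserved is immediate.
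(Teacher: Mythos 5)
The paper itself gives no proof of this lemma, only a pointer to Artin's \emph{Algebra}, Section~3.5, so there is nothing in the text to compare against; your argument correctly fills that gap. Your computation is the standard one: the prescribed basis change is right-multiplication of the right-hand block matrix by the unitriangular matrix $P=\begin{bmatrix} I & \lambda E \\ 0 & I \end{bmatrix}$ and left-multiplication of the left-hand block matrix by $P^{-1}$, which you correctly observe adds $\lambda$ times column $i$ of $S'$ to column $j$ of $M'$ while leaving $S,T,S',T'$ (and all other columns of $M'$) untouched; surjectivity of $S'$ (i.e.\ its columns spanning $U_1''$) then lets you clear the columns of $M'$ one at a time, and different columns do not interfere because each operation affects only one column of $M'$. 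This is exactly the intended elementary-linear-algebra argument, and I see no gaps.
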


\begin{proof}[Proof of Proposition~\ref{prop:basechange1}]
    By the definition of $Y$, the $k$-linear map $Y(a\leq b)$ is an isomorphism for every $a,b\in B$. 
    Thus, we clearly have $Y\cong k_B^{{\rm rank} V(\hat{0}\leq \hat{1})}$, 
    where we use ${\rm rank}V(\hat{0}\leq \hat{1}) = {\rm rank}Y(\hat{0}\leq \hat{1})$. 
    
    In the diagram \eqref{eq:repmat1}, if we consider the representation matrices (with fixed bases), we can apply Lemma~\ref{lem:delete_ast} repeatedly for reducing $g_{a,b}$ to $0$ by changing basis of $V(a)$, except at $(a,b) = (\hat{0},1')$. 
    After that, we can deduce $g_{\hat{0},1'}=0$ by the commutativity \eqref{eq:commutativity}. 
    Consequently, they give rise to an automorphism $\rho \colon V\to V$ and a decomposition $V = Y \oplus \rho(W)$ as desired. 
    
    The latter assertion is clear from our construction. 
\end{proof}

\subsection{Constructions of other summands}
\label{sec:second_step}
Let $W$ be a bipath persistence module satisfying $W(\hat{0}\leq \hat{1})=0$ (e.g. we may take $\rho(W)$ for $V$ of the previous subsection).

We define a submodule $W'$ of $W$ by letting $W'(a) := W(\hat{0}\leq a)(W(\hat{0}))$ for all $a\in B$. 
In addition, let $X$ be a $B$-persistence module such that 
$X(a)$ is a complement of $W'(a)$ in $W(a)$ and $X(a\leq b):=\pi'_b \circ W(a\leq b) \circ \iota'_a$, where $\iota_a' \colon X(a) \to W(a)$ and $\pi_a' \colon X(a) \to W(a)$ are the canonical inclusion and projection respectively. 
From our construction, for every arrow $a\to b$ in $\Hasse(B)$, we have a commutative diagram 
\begin{equation}
\begin{split}
\xymatrix@C=70pt@R=35pt{ 
W(a) 
    \ar[r]^{V(a\leq b)} \ar@{=}[d] & 
W(b) 
    \ar@{=}[d] \\ 
W'(a) \oplus X(a)
    \ar[r]^-{\begin{bsmallmatrix} 
W'(a \leq b) &  g_{a,b}' \\ 0 & X(a\leq b) \\ 
    \end{bsmallmatrix}} 
    \ar@{}[ur]|\circlearrowleft
& 
W'(b) \oplus X(b) 
}
\end{split}
\end{equation}
and an exact sequence \begin{equation}\label{eq:exactseq2}
    0\to W' \to W \to X \to 0 
\end{equation} 
in $\rep(B)$. 

We have the following result.  

\begin{proposition}\label{prop:basechange2}
    We have an automorphism $\sigma \colon W \xrightarrow{\sim} W$ such that $\sigma(W') = W'$ and $W = W'\oplus \sigma(X)$ in $\rep(B)$. 
    Thus, the exact sequence \eqref{eq:exactseq2} splits.
    Moreover, they satisfy $W'(\hat{1})=0$ and $X(\hat{0})=0$. 
\end{proposition}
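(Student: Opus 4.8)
The plan is to mimic the argument of Proposition~\ref{prop:basechange1}, but now in the "dual" direction, building the submodule $W'$ from the bottom (the vertex $\hat 0$) rather than needing to kill an image. First I would note that $W'(\hat 1) = 0$ and $X(\hat 0) = 0$ are immediate from the construction: since $W(\hat 0 \leq \hat 1) = 0$ by hypothesis, $W'(\hat 1) = W(\hat 0 \leq \hat 1)(W(\hat 0)) = 0$; and $X(\hat 0)$ is a complement of $W'(\hat 0) = W(\hat 0 \leq \hat 0)(W(\hat 0)) = W(\hat 0)$ in $W(\hat 0)$, hence is zero. The content is therefore the splitting of \eqref{eq:exactseq2} inside $\rep(B)$, which is not automatic even though it splits pointwise.

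The main step is to reduce the off-diagonal blocks $g'_{a,b}$ to zero by an automorphism of $W$. I would apply Lemma~\ref{lem:delete_ast} arrow-by-arrow along the Hasse quiver of $B$, processing the arrows $\hat 0 \to 1 \to 2 \to \cdots \to n \to \hat 1$ and $\hat 0 \to 1' \to \cdots \to m' \to \hat 1$ in order starting from $\hat 0$. The key observation making the lemma applicable here is that for each arrow $a \to b$, the map $W'(a \leq b) \colon W'(a) \to W'(b)$ is \emph{surjective} by construction (since $W'(b) = W(\hat 0 \leq b)(W(\hat 0)) = W(a \leq b)\big(W(\hat 0 \leq a)(W(\hat 0))\big) = W(a \leq b)(W'(a))$), so in the notation of Lemma~\ref{lem:delete_ast} the matrix $S'$ corresponding to $W'(a \leq b)$ has rank equal to its number of rows; this lets us clear $g'_{a,b}$ by changing the basis of $X(a)$ (equivalently, of $W(a)$) without disturbing the blocks already normalized. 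Processing outward from $\hat 0$, each change of basis at a vertex $a$ only affects the arrow going \emph{out} of $a$ that we are currently treating, so the procedure is consistent until we reach the two arrows $n \to \hat 1$ and $m' \to \hat 1$ into the sink. As in Proposition~\ref{prop:basechange1}, we can directly normalize one of these — say $g'_{n, \hat 1}$ — by Lemma~\ref{lem:delete_ast}, and then the remaining block $g'_{m', \hat 1}$ is forced to vanish by the commutativity relation \eqref{eq:commutativity} (both composites into $W(\hat 1)$ agree, and after normalization the upper-path composite has vanishing off-diagonal part, forcing the lower-path one to as well). Collecting these basis changes gives the automorphism $\sigma \colon W \to W$ with $\sigma(W') = W'$ and $W = W' \oplus \sigma(X)$, which yields the splitting of \eqref{eq:exactseq2}.

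The subtle point — and the one I would be most careful about — is the bookkeeping at the sink $\hat 1$: one must check that after clearing all off-diagonal blocks along the interior of both paths, the commutativity constraint \eqref{eq:commutativity} really does force $g'_{m',\hat 1} = 0$ rather than merely constraining it. This works because once $g'_{a,b} = 0$ for every interior arrow on the upper path and for $n \to \hat 1$, the $(W', X)$-block decomposition of the full upper composite $W(n \leq \hat 1)\cdots W(\hat 0 \leq 1)$ is block-upper-triangular with zero off-diagonal entry, and the same composite computed along the lower path must match it block-entrywise; since all lower interior blocks are already zero, the only surviving off-diagonal contribution is through $g'_{m',\hat 1}$ composed with the (surjective, hence nonzero on the relevant subspace) lower-path maps, forcing $g'_{m',\hat 1}$ itself to be zero. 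One also has to observe that the basis changes used do not interfere across the two paths — they don't, since before reaching $\hat 1$ each operates at a distinct interior vertex — so no circular dependency arises. Everything else (that $\sigma$ is a well-defined morphism of persistence modules, that $\sigma(W')=W'$, that the resulting direct sum is internal) is routine and parallels Proposition~\ref{prop:basechange1} verbatim, so I would state it briefly and refer back.
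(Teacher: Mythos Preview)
There is a genuine gap in your processing order. When you apply Lemma~\ref{lem:delete_ast} to the arrow $a \to b$ (viewing it as the right-hand map in the lemma, with $S' = W'(a\leq b)$ and $M' = g'_{a,b}$), the basis change takes place at the \emph{source} vertex $a$, and --- as the lemma itself records --- this acts by column operations on the outgoing matrix \emph{and} row operations on the incoming one. In particular the off-diagonal block $g'_{c,a}$ of the preceding arrow $c \to a$ gets altered (multiples of rows of $X(c\leq a)$ are added into it). Your claim that ``each change of basis at a vertex $a$ only affects the arrow going out of $a$'' is therefore false, and processing outward from $\hat0$ will in general destroy blocks already cleared.

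The fix --- and what the paper means by ``a similar discussion to a proof of Proposition~\ref{prop:basechange1}'' --- is to process in the opposite direction, from $\hat1$ back toward $\hat0$, so that the arrow whose off-diagonal may be disturbed is always one not yet treated. The only potential conflict is then at $\hat0$, where two outgoing arrows share the same source, exactly as in Proposition~\ref{prop:basechange1}. Here, however, it dissolves: since $X(\hat0)=0$ (as you already noted), both $g'_{\hat0,1}$ and $g'_{\hat0,1'}$ are maps out of the zero space and vanish automatically, so no separate commutativity argument is required. Your commutativity discussion at $\hat1$ is both misplaced and ineffective: the blocks $g'_{n,\hat1}$ and $g'_{m',\hat1}$ have codomain $W'(\hat1)=0$ and are already zero; and even ignoring that, the off-diagonal of the lower-path composite from $\hat0$ to $\hat1$ factors through $X(\hat0\leq m')$, which vanishes because $X(\hat0)=0$, so the relation \eqref{eq:commutativity} gives no constraint on $g'_{m',\hat1}$ at all.
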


\begin{proof}
    By the definition of $W$, the map $W'(a\leq b) \colon W'(a)\to W'(b)$  is surjective for every $a\to b$ in $\Hasse(B)$. 
    Then, we can use Lemma~\ref{lem:delete_ast} and get the former assertion by a similar discussion to a proof of Proposition \ref{prop:basechange1}. The latter assertion is obvious from the former one. 
\end{proof}

In the above situation, we have $W' = W_{\mathbb{L}}$. 
Moreover, we can apply the dual construction to $\sigma(X)$ to obtain a decomposition $W = W' \oplus \sigma(X) = W' \oplus W'' \oplus Z$ such that $W'' = W_{\mathbb{R}}$ and $Z(\hat{0}) = Z(\hat{1})=0$ by the dual statement of Proposition~\ref{prop:basechange2}. 
For $Z$, the restrictions $Z'$ and $Z''$ to the upper path $U$ and the lower path $D$ give a decomposition of $Z$, that is, $Z=Z'\oplus Z''$ satisfying $Z'=Z_{\mathbb{U}}$ and $Z''=Z_{\mathbb{D}}$ respectively.

Now, we go back to the decomposition of $V$. 
As a summary of this section, we have the following.

\begin{proposition}
    Let $Y, W',W'',Z',Z''$ be bipath persistence modules defined for $V$ in Subsections 
    ~\ref{sec:first_step} and \ref{sec:second_step}. 
    Then, they give the desired decomposition of $V$ in \eqref{eq:VtoLRUD}. 
\end{proposition}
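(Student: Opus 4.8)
The plan is to chain the splittings of Subsections~\ref{sec:first_step} and~\ref{sec:second_step} into a single internal direct sum of $V$, and then to identify each summand by determining which interval modules can occur in it. The narrative following Propositions~\ref{prop:basechange1} and~\ref{prop:basechange2} already records these identifications ($W' = W_{\mathbb{L}}$, $W'' = W_{\mathbb{R}}$, and the analogous statements for $Z'$, $Z''$), so the main task for this proposition is to check that everything fits together and to make those identifications precise; I would nevertheless verify the support facts from scratch, since that is where the content lies.

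\textbf{Assembling the decomposition.} Applying Proposition~\ref{prop:basechange1} to $V$ yields an automorphism $\rho$ with $V = Y \oplus \rho(W)$ and $\rho(W)(\hat{0}\le\hat{1}) = 0$. Feeding $\rho(W)$ into the construction of Subsection~\ref{sec:second_step}, Proposition~\ref{prop:basechange2} gives $\rho(W) = W' \oplus \sigma(X)$ with $W'(\hat{1}) = 0$ and $\sigma(X)(\hat{0}) = 0$, and the dual of Proposition~\ref{prop:basechange2} applied to $\sigma(X)$ gives $\sigma(X) = W'' \oplus Z$ with $W''(\hat{0}) = 0$ and $Z(\hat{0}) = Z(\hat{1}) = 0$. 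Since $Z(\hat{0}) = Z(\hat{1}) = 0$, the commutativity relation~\eqref{eq:commutativity} holds vacuously for $Z$, so $Z$ is exactly the data of a representation of the equioriented $A$-type quiver on $\{1,\dots,n\}$ together with one on $\{1',\dots,m'\}$; restricting to the upper and lower paths $U$ and $D$ of~\eqref{eq:UandD} then yields the decomposition $Z = Z' \oplus Z''$ of Subsection~\ref{sec:second_step}. Stacking these splittings gives an internal direct sum $V = Y \oplus W' \oplus W'' \oplus Z' \oplus Z''$ of submodules of $V$.

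\textbf{Identifying the summands.} By Proposition~\ref{prop:basechange1}, $Y \iso k_B^{\rank V(\hat{0}\le\hat{1})}$, which involves only the interval $B$. For $W'$: it is interval-decomposable by Proposition~\ref{prop:str_bipathmodule}; the condition $W'(\hat{1}) = 0$ rules out interval summands containing $\hat{1}$ (so $B$ and all of $\mathbb{R}(B)$ are excluded); and $W'$ is generated in degree $\hat{0}$, i.e.\ $W'(\hat{0}\le a)$ is surjective for all $a$, which for an interval summand $k_J$ forces $\hat{0}\in J$ (so $\mathbb{U}(B)$ and $\mathbb{D}(B)$ are excluded). Hence $W'$ is a direct sum of $k_J$ with $J$ containing $\hat{0}$ but not $\hat{1}$, and by the enumeration in Definition~\ref{def:LRUD} together with Proposition~\ref{prop:str_bipathmodule}(1) these $J$ are exactly the elements of $\mathbb{L}(B)$. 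Dually, the construction of $W''$ gives $W''(\hat{0}) = 0$ and $W''$ cogenerated in degree $\hat{1}$, and the dual argument shows $W''$ is a sum of $k_J$ with $J \in \mathbb{R}(B)$. Finally, $Z'$ is supported on $\{1,\dots,n\}$ and $Z''$ on $\{1',\dots,m'\}$, so their interval summands are the convex connected subsets of these two chains inside $B$, namely the intervals $[b,d]$ avoiding $\hat{0}$ and $\hat{1}$; these are precisely the elements of $\mathbb{U}(B)$ and $\mathbb{D}(B)$ respectively.

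\textbf{Conclusion and main obstacle.} Thus $V = Y \oplus W' \oplus W'' \oplus Z' \oplus Z''$ is an interval decomposition whose five summands involve, respectively, only intervals from $\{B\}$, $\mathbb{L}(B)$, $\mathbb{R}(B)$, $\mathbb{U}(B)$, $\mathbb{D}(B)$. Since these classes partition $\mathbb{I}(B)$ by Proposition~\ref{prop:str_bipathmodule}(1), and the interval decomposition of $V$ is unique up to isomorphism (Krull--Schmidt), comparing with the grouping in Proposition~\ref{prop:str_bipathmodule}(2) forces $Y \iso V_B$, $W' \iso V_{\mathbb{L}}$, $W'' \iso V_{\mathbb{R}}$, $Z' \iso V_{\mathbb{U}}$, $Z'' \iso V_{\mathbb{D}}$, so these submodules realize the decomposition~\eqref{eq:VtoLRUD}. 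The assembling step is routine bookkeeping; the part that takes real work is the identification step, in particular getting the dual construction of $W''$ set up correctly and verifying the two combinatorial facts used above — that an interval of $B$ contains $\hat{0}$ but not $\hat{1}$ if and only if it lies in $\mathbb{L}(B)$ (and dually for $\mathbb{R}(B)$), and that the interval summands of $Z'$ and $Z''$ are exactly $\mathbb{U}(B)$ and $\mathbb{D}(B)$ — both of which follow directly from Definition~\ref{def:LRUD}.
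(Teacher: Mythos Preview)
Your proposal is correct and follows the same approach as the paper: chain the splittings of Propositions~\ref{prop:basechange1} and~\ref{prop:basechange2} (and its dual) to obtain the internal direct sum, and then identify each summand by the interval types it can contain. The paper's own proof is a one-liner that defers entirely to the narrative preceding the proposition (where the identifications $W'=W_{\mathbb L}$, $W''=W_{\mathbb R}$, $Z'=Z_{\mathbb U}$, $Z''=Z_{\mathbb D}$ are asserted); you have simply supplied the verification of those assertions explicitly, using interval-decomposability plus the support constraints, which is exactly what is needed to justify them.
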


\begin{proof}
    All modules in the statement can be taken as submodules of $V$ satisfying the desired properties. 
\end{proof}

\section{Main algorithm: Matrix Method }\label{section:matrixmethod}

\newcommand{\upperp}{U}
\newcommand{\lowerp}{D}
\newcommand{\upperX}{X}
\newcommand{\lowerX}{Y}
\newcommand{\lefts}{\ell}
\newcommand{\rights}{r}
\newcommand{\centers}{c}
\newcommand{\others}{o}

In this section, we use the idea of matrix problems methods
(see for example \cite[Chapter~1]{gabriel1992representations}, \cite{asashiba2019matrix})
to provide an algorithm for computing
interval decompositions of bipath persistence modules,
independent of the decomposition given in Section~\ref{Section:Dim=0}. 
Note that matrix problems can be understood within the theoretical framework
of the representation theory of bocses \cite{crawley1988tame,rojter2006matrix}
or modules over ditalgebras \cite{bautista2009differential},
but to keep the necessary theoretical background to a minimum
we do not use these formulations.

We first give an overview of the idea for this algorithm.
We fix a bipath poset $B:=B_{n,m}$ and a bipath persistence module $V$ over $B$. 
Recall that $V$ is described by a diagram 
\[    
    \begin{tikzcd}[row sep=0.1em, column sep=4.5em, inner sep=0pt]
      & V(1) \rar{V(1\leq 2)} & V(2) \rar{V(2\leq 3)} & \cdots \rar{V(n-1\leq n)} & V(n) \ar[dr]{}{V(n\leq \hat{1})} & \\ 
      V(\hat{0}) \ar[ur]{}{V(\hat{0} \leq 1)} 
      \ar[dr]{}{V(\hat{0}\leq 1')} & & & & & V(\hat{1}). \\    
      & V(1') \rar{V(1'\leq 2')} & V(2') \rar{V(2'\leq 3')} & \cdots \rar{V(m-1' \leq m')} & V(m') \ar[ur]{}{V(m'\leq \hat{1})} &
    \end{tikzcd}
\]

For the upper path $\upperp$ and the lower path $\lowerp$ in \eqref{eq:UandD}, we restrict $V$ to obtain persistence modules $V_\upperp$ and $V_\lowerp$ over $\upperp$ and $\lowerp$ respectively. 
\begin{equation*}
    \begin{tikzcd}[row sep=0.5em, column sep=4.5em, inner sep=0pt]
    V_U \colon \ V(\hat{0}) \ar[r]{}{V(\hat{0} \leq 1)} & V(1) \rar{V(1\leq 2)} & V(2) \rar{V(2\leq 3)} & \cdots \rar{V(n-1\leq n)} & V(n) \ar[r]{}{V(n\leq \hat{1})} &  V(\hat{1}), \\ 
    V_D \colon \ V(\hat{0}) \ar[r]{}{V(\hat{0} \leq 1')} & V(1') \rar{V(1'\leq 2')} & V(2') \rar{V(2'\leq 3')} & \cdots \rar{V(m-1' \leq m')} & V(m') \ar[r]{}{V(m'\leq \hat{1})} &  V(\hat{1}).
    \end{tikzcd}
\end{equation*}

Since $U$ and $D$ are the equioriented $A$-type posets, both are interval-decomposable as in Section \ref{sec:bipath_modules}. 
That is, there exist isomorphisms of persistence modules 

\begin{eqnarray*}
  \begin{tikzcd}
    V_\upperp \rar{\phi} & \bigoplus\limits_{I \in \Int{\upperp}} k_{I}^{m_\upperp(I)} =: \upperX
  \end{tikzcd}
  & (\text{in } \rep(\upperp)), \\
  \begin{tikzcd}
    V_\lowerp \rar{\psi} & \bigoplus\limits_{I \in \Int{\lowerp}} k_{I}^{m_\lowerp(I)} =: \lowerX
  \end{tikzcd}
  & (\text{in } \rep(\lowerp)),
\end{eqnarray*}
which gives interval decompositions for $V_\upperp$ and $V_\lowerp$ independently of each other.
However, $V_\upperp$ and $V_\lowerp$ share the same vector space at $\hat0$ and $\hat1$,
and we may not be able to combine $\phi$ and $\psi$ as-is in order to obtain
an isomorphism of $V$ with an interval-decomposable representation of $\rep(B)$. 

Thus, we consider two change-of-basis matrices:
one (which we call $\Lambda$ below) at $\hat0$ for the vector space $V_\upperp(\hat0) = V_\lowerp(\hat0)$
connecting the basis specified by $\phi_{\hat0}$ and the basis specified by $\psi_{\hat0}$
and another one (which we call $\Gamma$ below) at $\hat1$ for the vector space
$V_\upperp(\hat1) = V_\lowerp(\hat1)$ defined similarly.
Using elementary matrix operations we transform them in a way (coinciding with changes of bases)
to get bases for all the vector spaces of $V$ coinciding with an interval-decomposition.
While $\Lambda$ and $\Gamma$ are defined as matrices,
we declare certain elementary operations as not permissible,
and perform only the permissible elementary operations. This is to ensure that
the interval decompositions already obtained for the upper and lower paths
are not destroyed.

That is, the above defines a matrix problem $M(\phi, \psi)$
which is given as a block matrix together with certain permissible operations.
Solving this matrix problem (i.e.\ transforming it to a standard form) corresponds with
computing an interval decomposition of $V$.
This ends the high-level overview of the algorithm.

\subsection{Defining the block matrix problem} 
Keeping the above notation, we consider the isomorphisms of vector spaces
$\Lambda := \psi_{\hat0} \phi_{\hat0}^{-1}$ at $\hat0$
and
$\Gamma := \psi_{\hat1} \phi_{\hat1}^{-1}$ at $\hat1$,
which makes the diagrams
\[
  \begin{tikzcd}
    V_\upperp(\hat0) \ar[d,equal] \rar{\phi_{\hat0}} & \upperX(\hat0) \dar{\Lambda := \psi_{\hat0} \phi_{\hat0}^{-1}}\\
    V_\lowerp(\hat0) \rar{\psi_{\hat0}} & \lowerX(\hat0)
  \end{tikzcd}
  \quad \text{ and } \quad 
  \begin{tikzcd}
    V_\upperp(\hat1) \ar[d,equal] \rar{\phi_{\hat1}} & \upperX(\hat1) \dar{\Gamma := \psi_{\hat1} \phi_{\hat1}^{-1}}\\
    V_\lowerp(\hat1) \rar{\psi_{\hat1}} & \lowerX(\hat1)
  \end{tikzcd}
\]
commute.
These form the front and back faces of the following cube:
\begin{equation}
  \label{eq:bigcube}
  \begin{tikzcd}
    V_\upperp(\hat0) \ar[dd,equal] \ar[rr,pos=0.2]{}{\phi_{\hat0}} && \upperX(\hat0) \ar[dd]{}{\Lambda} \ar[drrr,sloped,pos=0.2]{}{\upperX(\hat0\leq\hat1)}  \\
    && & V_\upperp(\hat1) \ar[rr,pos=0.2]{}{\phi_{\hat1}} \ar[ulll,swap,sloped,pos=0.8,crossing over,<-]{}{V_\upperp(\hat0\leq\hat1)}  && \upperX(\hat1) \ar[dd]{}{\Gamma}\\
    V_\lowerp(\hat0) \ar[rr,pos=0.2]{}{\psi_{\hat0}} \ar[drrr,swap,sloped,pos=0.2]{}{V_\lowerp(\hat0\leq\hat1)}  && \lowerX(\hat0) \ar[drrr,sloped,pos=0.2]{}{\lowerX(\hat0\leq\hat1)} \\
    && & V_\lowerp(\hat1) \ar[uu,equal,crossing over] \ar[rr,pos=0.2]{}{\psi_{\hat1}} && \lowerX(\hat1)
  \end{tikzcd}
\end{equation}
The left face commutes because $V_\upperp(\hat0\leq\hat1) = V(\hat0\leq\hat1) = V_\lowerp(\hat0\leq\hat1)$.
The top and bottom faces commutes because $\phi$ and $\psi$ are morphisms of representations.
Thus, the right face
\begin{equation}
  \label{eq:leftrightlink}
  \begin{tikzcd}[column sep=3em,]
    \upperX(\hat0) \rar{\upperX(\hat0\leq\hat1)} \dar{\Lambda} & \upperX(\hat1) \dar{\Gamma}\\
    \lowerX(\hat0) \rar{\lowerX(\hat0\leq\hat1)} & \lowerX(\hat1)
  \end{tikzcd}
\end{equation}
also commutes.

As we will need it later, and to give some theoretical framework for our manipulations,
we define the poset

\begin{equation}
  \label{diag:extendedbipath}
  B' \colon
  \begin{tikzcd}[inner sep=0pt]
    \hat{0} \dar \rar & 1 \rar & 2 \rar & \cdots \rar & n \rar  & \hat{1}' \dar \\
    \hat{0}' \rar & 1' \rar & 2' \rar & \cdots \rar & m' \rar  &  \hat{1}
  \end{tikzcd},
\end{equation}
which contains $B$ as a subposet,
and consider the inclusion functor
\[
  \iota \colon \rep (B) \hookrightarrow \rep (B')
\]
where for $V \in \rep (B)$, we set
\begin{align*}
  \iota(V)(a) & := V(a) \text{ for } a\in B, \\
  \iota(V)(x') & := V(x) \text{ for } x \in \{\hat{0},\hat{1}\}, \\  
  \iota(V)(\hat0 \leq \hat0') & := 1_{V(\hat0)},\\
  \iota(V)(\hat1' \leq \hat1) & := 1_{V(\hat1)},\\
\end{align*}
and a similar definition for morphisms $f \colon V\rightarrow W$ in $\rep (B)$.

  \begin{lemma}
    \label{lem:formalism0}
    The functor $\iota$ as defined above is a fully faithful $k$-linear functor.
  \end{lemma}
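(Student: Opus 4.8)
The statement to prove is Lemma~\ref{lem:formalism0}: that the functor $\iota \colon \rep(B) \hookrightarrow \rep(B')$ is a fully faithful $k$-linear functor. The plan is to verify the three requirements in turn: well-definedness (i.e., that $\iota(V)$ really is an object of $\rep(B')$ and $\iota(f)$ a morphism), $k$-linearity, and full faithfulness.

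First I would check that $\iota$ is well-defined on objects. Given $V \in \rep(B)$, the data of $\iota(V)$ assigns vector spaces to all vertices of $B'$ and linear maps to all arrows in $\Hasse(B')$; since $B'$ is determined by its Hasse quiver, I only need to confirm that the composites of these maps along any two directed paths in $B'$ with the same source and target agree. The only new arrows in $B'$ compared with $B$ are $\hat0 \to \hat0'$ and $\hat1' \to \hat1$, and $\iota(V)$ assigns identity maps to both. The one nontrivial commutativity relation to check is the one coming from the two paths from $\hat0$ to $\hat1$ in $B'$: the upper path $\hat0 \to 1 \to \cdots \to n \to \hat1' \to \hat1$ and the lower path $\hat0 \to \hat0' \to 1' \to \cdots \to m' \to \hat1$. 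Since the maps at $\hat0 \to \hat0'$ and $\hat1' \to \hat1$ are identities, both composites reduce precisely to the two sides of the commutativity relation~\eqref{eq:commutativity} satisfied by $V$, so they agree. A similar (even more trivial) check handles morphisms: for $f \colon V \to W$ in $\rep(B)$, the naturality squares of $\iota(f)$ at the two new arrows are automatically commutative because the horizontal maps are identities and $\iota(f)_{\hat0'} = f_{\hat0}$, $\iota(f)_{\hat1'} = f_{\hat1}$ by definition. Functoriality ($\iota(1_V) = 1_{\iota(V)}$ and $\iota(g \circ f) = \iota(g)\circ\iota(f)$) and $k$-linearity of $f \mapsto \iota(f)$ are immediate since $\iota$ acts componentwise as the identity on the underlying linear maps at vertices of $B$ and duplicates them at $\hat0', \hat1'$.

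For faithfulness: if $\iota(f) = \iota(g)$ then in particular $f_a = \iota(f)_a = \iota(g)_a = g_a$ for all $a \in B$, so $f = g$. For fullness: given a morphism $h \colon \iota(V) \to \iota(W)$ in $\rep(B')$, define $f_a := h_a$ for $a \in B$; the naturality squares of $h$ over the arrows of $\Hasse(B)$ that lie inside $B$ are exactly the naturality squares required for $f$ to be a morphism $V \to W$ in $\rep(B)$, so $f$ is well-defined, and $\iota(f) = h$ follows once one observes that $h_{\hat0'}$ is forced to equal $h_{\hat0} = f_{\hat0}$ (by naturality over $\hat0 \to \hat0'$, whose map is the identity on both $\iota(V)$ and $\iota(W)$) and similarly $h_{\hat1'} = h_{\hat1} = f_{\hat1}$, matching the definition of $\iota(f)$.

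There is no real obstacle here; the lemma is essentially a bookkeeping verification. The only point requiring a moment's care is the commutativity check for $\iota(V)$ being a valid $B'$-persistence module, where one must recognize that the single relation in $B'$ unwinds to precisely the relation~\eqref{eq:commutativity} defining $\rep(B)$ — so the correspondence is tight, and this is also what makes the functor full rather than merely faithful.
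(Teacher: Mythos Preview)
Your proof is correct. The paper states this lemma without proof, treating it as a routine verification, so you have supplied the details the authors left to the reader; your argument follows exactly the expected direct check of well-definedness, $k$-linearity, faithfulness, and fullness, with the one substantive observation being that the single commutativity constraint in $B'$ unwinds to relation~\eqref{eq:commutativity}.

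One small phrasing point: when you verify fullness and write ``the naturality squares of $h$ over the arrows of $\Hasse(B)$ that lie inside $B$'', note that the arrows $n \to \hat1$ and $\hat0 \to 1'$ of $\Hasse(B)$ are \emph{not} arrows of $\Hasse(B')$; they factor there through $\hat1'$ and $\hat0'$ respectively. What you actually use is that $B$ is a full subposet of $B'$, so every relation $a \leq b$ in $B$ is a relation in $B'$, and naturality of $h$ for that relation (together with $\iota(V)(n\leq\hat1) = V(n\leq\hat1)$, etc.) gives what you need. This is implicit in your argument but worth stating cleanly.
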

  Then, the right face of Diagram~\eqref{eq:bigcube} is simply the
  representation $\iota(V)$ restricted to the corners $\hat0, \hat0', \hat1, \hat1'$ of $B'$,
  and the above argument shows the following.

\begin{lemma}
  \label{lem:formalism1}
  The isomorphisms $\phi$ and $\psi$
  induce an isomorphism $f \colon \iota(V) \rightarrow Z$ in $\rep (B')$, 
  where $Z$ is the $B'$-persistence module given by 
  \[
  \begin{tikzcd}[inner sep=0pt]
    \upperX(\hat{0}) \dar[swap]{\Lambda} \rar  & \upperX(1) \rar  & \cdots \rar & \upperX(n) \rar  & \upperX(\hat{1}) \dar{\Gamma} \\
    \lowerX(\hat{0}') \rar  & \lowerX(1') \rar  & \cdots \rar & \lowerX(m') \rar  &  \lowerX(\hat{1}').
  \end{tikzcd}
  \]
\end{lemma}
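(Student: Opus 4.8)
The plan is to verify directly that the data we already have — the isomorphisms $\phi \colon V_\upperp \to \upperX$ in $\rep(\upperp)$ and $\psi \colon V_\lowerp \to \lowerX$ in $\rep(\lowerp)$ — assemble into a morphism $f \colon \iota(V) \to Z$ in $\rep(B')$, and then to observe that it is an isomorphism because it is so pointwise. First I would define $f$ componentwise: on the upper row of $B'$, namely at the vertices $\hat0, 1, 2, \dots, n$, set $f$ equal to the corresponding component of $\phi$ (using $f_{\hat 0} := \phi_{\hat 0}$, $f_i := \phi_i$); on the lower row, at the vertices $\hat 0', 1', \dots, m', \hat 1'$, set $f$ equal to the corresponding component of $\psi$ (using that $\iota(V)(\hat 0') = V(\hat 0) = V_\lowerp(\hat 0)$ and $\iota(V)(\hat 1') = V(\hat 1) = V_\lowerp(\hat 1)$), so $f_{\hat 0'} := \psi_{\hat 0}$, $f_{\hat 1'} := \psi_{\hat 1}$, $f_{j'} := \psi_{j'}$; and at the remaining vertex $\hat 1$ of the upper row, set $f_{\hat 1} := \phi_{\hat 1}$.

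Next I would check the commuting-square condition for $f$ over every arrow in $\Hasse(B')$. For the arrows lying entirely within the upper path $\hat 0 \to 1 \to \cdots \to n \to \hat1$, this is exactly the statement that $\phi$ is a morphism in $\rep(\upperp)$; for the arrows lying within the lower path $\hat 0' \to 1' \to \cdots \to m' \to \hat 1'$, it is the statement that $\psi$ is a morphism in $\rep(\lowerp)$. The only new arrows of $B'$ are $\hat0 \to \hat0'$ and $\hat1' \to \hat1$. Over $\hat0 \to \hat0'$, the relevant square has $\iota(V)(\hat 0 \leq \hat 0') = 1_{V(\hat 0)}$ on top and $Z(\hat 0 \leq \hat 0') = \Lambda$ on the bottom, with $\phi_{\hat 0}$ on the left and $\psi_{\hat 0}$ on the right; commutativity $\psi_{\hat 0} \circ 1_{V(\hat 0)} = \Lambda \circ \phi_{\hat 0}$ is precisely the definition $\Lambda := \psi_{\hat 0}\phi_{\hat 0}^{-1}$. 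The arrow $\hat 1' \to \hat 1$ is handled identically using $\Gamma := \psi_{\hat 1}\phi_{\hat 1}^{-1}$. Finally, since each $f_a$ is a component of $\phi$ or $\psi$, hence an isomorphism of vector spaces, $f$ is an isomorphism in $\rep(B')$.

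I do not expect a serious obstacle here; the content of the lemma is essentially bookkeeping, and the only point requiring a moment's care is confirming that $Z$ as drawn really is a well-defined $B'$-persistence module — i.e.\ that the square \eqref{eq:leftrightlink} commutes, so that the two composites $\upperX(\hat 0) \to \lowerX(\hat 1')$ agree. But that commutativity has already been established in the discussion preceding the lemma (it is the right face of the cube \eqref{eq:bigcube}), so I would simply cite it. One could alternatively phrase the whole argument more abstractly via Lemma~\ref{lem:formalism0}: the fully faithful functor $\iota$ lets us treat $\iota(V)$ as a representation of $B'$, and the cube \eqref{eq:bigcube} exhibits the required isomorphism with $Z$ by construction; I would keep the explicit componentwise check as the primary argument since it is short and self-contained.
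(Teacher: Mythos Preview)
Your approach is correct and is essentially the same as the paper's, which simply remarks that ``the proof is essentially the commutativity of Diagram~\eqref{eq:bigcube}''; your componentwise definition of $f$ and the square-by-square verification make that remark explicit. One labeling slip to correct: in the poset $B'$ of \eqref{diag:extendedbipath} the \emph{upper} row ends at $\hat 1'$ and the \emph{lower} row ends at $\hat 1$, so the right assignments at the right-hand end are $f_{\hat 1'} := \phi_{\hat 1}$ and $f_{\hat 1} := \psi_{\hat 1}$ (you have these swapped); with that fix, the checks over $n \to \hat 1'$, $m' \to \hat 1$, and $\hat 1' \to \hat 1$ go through exactly as you describe via $\phi$, $\psi$, and $\Gamma = \psi_{\hat 1}\phi_{\hat 1}^{-1}$.
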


Note that the representation $Z$ (restricted to the corners $\hat0, \hat0', \hat1, \hat1'$) is the right face of Diagram~\eqref{eq:bigcube},
and the proof is essentially the commutativity of Diagram~\eqref{eq:bigcube}.

In the rest of this subsection until the start of subsection~\ref{subsec:normal_form_to_decompo},
we will define a block matrix problem and provide an algorithm for computing its normal form.
While we will not directly refer to the above formalization,
the block matrix problem and computing its normal form corresponds
to finding an isomorphism of $\iota(V)$ with a representation $Z$ in $\rep (B')$
with $\Lambda$ and $\Gamma$ essentially identity maps (in fact, permutation maps).
From such a form we can read out an interval decomposition of $\iota(V)$, and thus of $V$.
This connection will be explained in Subsection~\ref{subsec:normal_form_to_decompo}.

We partition the direct summands of $\upperX := \bigoplus\limits_{I \in \Int{\upperp}} k_{I}^{m(I)}$
as $\upperX = \upperX_\lefts \oplus \upperX_\centers \oplus \upperX_\rights \oplus \upperX_\others$
where $\upperX_\centers$ consists of all terms that are nonzero at $\hat0$ and $\hat1$,
$\upperX_\lefts$ consists all terms that are nonzero at $\hat0$ and zero at $\hat1$,
$\upperX_\rights$ consists all terms that are zero at $\hat0$ and nonzero at $\hat1$,
and
$\upperX_\others$ are the remaining terms.
We also define the decomposition
$\lowerX = \lowerX_\lefts \oplus \lowerX_\centers \oplus \lowerX_\rights \oplus \lowerX_\others$
similarly.
The intervals in $\upperX_\others$ and $\lowerX_\others$ do not intersect $\hat0$ nor $\hat1$,
and thus are themselves also (isomorphic to) intervals for the original bipath persistence module $V$.

Thus, commutative Diagram~\eqref{eq:leftrightlink} becomes
\begin{equation}
  \label{eq:leftrightlink2}
  \begin{tikzcd}[ampersand replacement=\&, column sep=3.5em,]
    \upperX_\lefts(\hat0) \oplus \upperX_\centers(\hat0)
    \rar{\smat{0&E\\0&0}}
    \dar{\Lambda}
    \&
    \upperX_\centers(\hat1) \oplus \upperX_\rights(\hat1) \dar{\Gamma}\\
    \lowerX_\lefts(\hat0) \oplus \lowerX_\centers(\hat0)
    \rar{\smat{0&E\\0&0}}
    \&
    \lowerX_\centers(\hat1) \oplus \lowerX_\rights(\hat1)
  \end{tikzcd}
\end{equation}
where
\begin{itemize}
\item we identify the linear maps
  with their matrix representations
  relative to the standard basis given by the decomposition of
  $\upperX(\hat0)$,
  $\upperX(\hat1)$,
  $\lowerX(\hat0)$, and
  $\lowerX(\hat1)$,
\item $0$ and $E$ are block matrices of appropriate sizes, and
\item $\upperX(\hat0\leq\hat1)$ and $\lowerX(\hat0\leq\hat1)$ take on the given forms
  due to the definition of
  $\upperX_\lefts, \upperX_\centers,\upperX_\rights,\upperX_\others$ and of
  $\lowerX_\lefts, \lowerX_\centers,\lowerX_\rights,\lowerX_\others$.
\end{itemize}
Then, writing $\Lambda$ and $\Gamma$ in block matrix form
\begin{eqnarray*}
  \Lambda =
  \begin{bmatrix}
    \pi_\lefts \Lambda \iota_\lefts & \pi_\lefts \Lambda \iota_\centers \\
    \pi_\centers \Lambda \iota_\lefts & \pi_\centers \Lambda \iota_\centers
  \end{bmatrix}\colon
  \upperX_\lefts(\hat0) \oplus \upperX_\centers(\hat0)
  \rightarrow
  \lowerX_\lefts(\hat0) \oplus \lowerX_\centers(\hat0) \\
  \Gamma =
  \begin{bmatrix}
    \pi_\centers \Gamma \iota_\centers & \pi_\centers \Gamma \iota_\rights \\
    \pi_\rights \Gamma \iota_\centers & \pi_\rights \Gamma \iota_\rights
  \end{bmatrix}\colon
  \upperX_\centers(\hat1) \oplus \upperX_\rights(\hat1)
  \rightarrow
  \lowerX_\centers(\hat1) \oplus \lowerX_\rights(\hat1)
\end{eqnarray*}
the commutativity of Diagram~\eqref{eq:leftrightlink2}
implies that
\begin{equation}
  \label{eq:zero}
  \pi_\centers \Lambda \iota_\lefts = 0,
  \pi_\rights \Gamma \iota_\centers = 0,
  \text{ and }
  \pi_\centers \Lambda \iota_\centers  = \pi_\centers \Gamma \iota_\centers
\end{equation}
where $\pi$ and $\iota$ indicate the appropriate projections and inclusions.
That is, we have block diagonal forms
\begin{eqnarray}
  \label{eq:lambdablock}
  \Lambda =
  \begin{bmatrix}
    \pi_\lefts \Lambda \iota_\lefts & \pi_\lefts \Lambda \iota_\centers \\
    0 & \pi_\centers \Lambda \iota_\centers
  \end{bmatrix}:
  \upperX_\lefts(\hat0) \oplus \upperX_\centers(\hat0)
  \rightarrow
  \lowerX_\lefts(\hat0) \oplus \lowerX_\centers(\hat0) \\
  \label{eq:gammablock}
  \Gamma =
  \begin{bmatrix}
    \pi_\centers \Gamma \iota_\centers & \pi_\centers \Gamma \iota_\rights \\
    0 & \pi_\rights \Gamma \iota_\rights
  \end{bmatrix}:
  \upperX_\centers(\hat1) \oplus \upperX_\rights(\hat1)
  \rightarrow
  \lowerX_\centers(\hat1) \oplus \lowerX_\rights(\hat1)
\end{eqnarray}

We will need the following property later.
\begin{lemma}
  \label{lem:invertible}
  The blocks
  $\pi_\lefts \Lambda \iota_\lefts$,
  $\pi_\centers \Lambda \iota_\centers  = \pi_\centers \Gamma \iota_\centers$,
  and
  $\pi_\rights \Gamma \iota_\rights$
  in Equations~\eqref{eq:lambdablock},\ \eqref{eq:gammablock} are invertible.
\end{lemma}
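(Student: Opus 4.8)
The plan is to exhibit each of the three blocks as a composite of isomorphisms, using the fact that $\Lambda$ and $\Gamma$ are themselves isomorphisms together with their block-triangular shapes in \eqref{eq:lambdablock} and \eqref{eq:gammablock}. First I would observe that a block upper-triangular matrix $\smat{A & B \\ 0 & C}$ between direct sums is invertible if and only if both diagonal blocks $A$ and $C$ are invertible; this is elementary linear algebra. Applying this to $\Lambda$ (which is an isomorphism of vector spaces, being $\psi_{\hat0}\phi_{\hat0}^{-1}$) and its form in \eqref{eq:lambdablock} immediately gives that $\pi_\lefts \Lambda \iota_\lefts$ and $\pi_\centers \Lambda \iota_\centers$ are invertible. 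Similarly, applying it to $\Gamma = \psi_{\hat1}\phi_{\hat1}^{-1}$ and its form in \eqref{eq:gammablock} gives that $\pi_\centers \Gamma \iota_\centers$ and $\pi_\rights \Gamma \iota_\rights$ are invertible. Finally, the equality $\pi_\centers \Lambda \iota_\centers = \pi_\centers \Gamma \iota_\centers$ recorded in \eqref{eq:zero} identifies the two middle blocks, so there is nothing further to check for that one.

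The only slightly delicate point is justifying that $\Lambda$ and $\Gamma$ are genuinely block \emph{upper}-triangular with respect to the displayed ordering of summands, i.e.\ that the lower-left blocks vanish. But this is exactly the content of the first two equations in \eqref{eq:zero}, namely $\pi_\centers \Lambda \iota_\lefts = 0$ and $\pi_\rights \Gamma \iota_\centers = 0$, which were derived from the commutativity of Diagram~\eqref{eq:leftrightlink2}: the horizontal maps there are the nilpotent blocks $\smat{0 & E \\ 0 & 0}$ coming from the definitions of the $\upperX_\bullet$ and $\lowerX_\bullet$ pieces, and chasing a general element through the square forces those off-diagonal entries of $\Lambda$ and $\Gamma$ to be zero. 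Since that computation has already been carried out in the text preceding the lemma, I would simply cite \eqref{eq:zero} and \eqref{eq:lambdablock}--\eqref{eq:gammablock} rather than redo it.

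I do not expect a genuine obstacle here: the lemma is a bookkeeping consequence of facts already assembled. The one thing to be careful about is matching the direction of the maps and the labelling of projections/inclusions $\pi_\lefts,\pi_\centers,\pi_\rights,\iota_\lefts,\iota_\centers,\iota_\rights$ correctly, so that ``diagonal block of an invertible triangular matrix'' is applied to the right blocks; once the shapes in \eqref{eq:lambdablock} and \eqref{eq:gammablock} are taken as given, the argument is a two-line invocation of the triangular-matrix fact applied twice.
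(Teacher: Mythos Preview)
Your approach is essentially the same as the paper's---use the block upper-triangular shapes in \eqref{eq:lambdablock} and \eqref{eq:gammablock} together with the invertibility of $\Lambda$ and $\Gamma$---but there is a genuine gap, and it is not where you think. The ``elementary linear algebra'' fact you invoke, that a block upper-triangular map
\[
\begin{bmatrix} A & B \\ 0 & C \end{bmatrix}\colon V_1\oplus V_2 \longrightarrow W_1\oplus W_2
\]
is an isomorphism if and only if the diagonal blocks $A$ and $C$ are isomorphisms, is \emph{false} unless one knows in advance that $\dim V_1=\dim W_1$ (equivalently $\dim V_2=\dim W_2$). For a counterexample, take $V_1=k$, $V_2=k$, $W_1=k^2$, $W_2=0$, with $A(v)=(v,0)$, $B(v)=(0,v)$, $C=0$; the total map is the identity on $k^2$ but neither $A$ nor $C$ is an isomorphism. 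In general one only gets that $A$ is injective and $C$ is surjective.

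So the delicate point is not the vanishing of the lower-left blocks (that is indeed already recorded in \eqref{eq:zero}), but the squareness of the diagonal blocks. The paper supplies exactly this: it checks that
\[
\dim \upperX_\centers(\hat0)=\operatorname{rank}\upperX(\hat0\leq\hat1)
=\operatorname{rank}V(\hat0\leq\hat1)
=\operatorname{rank}\lowerX(\hat0\leq\hat1)=\dim \lowerX_\centers(\hat0),
\]
using that $\upperX\cong V_\upperp$ and $\lowerX\cong V_\lowerp$ share the same internal map $V(\hat0\leq\hat1)$; this forces $\dim \upperX_\lefts(\hat0)=\dim \lowerX_\lefts(\hat0)$ and similarly at $\hat1$. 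Once the diagonal blocks are square, the paper finishes with the determinant identity $\det\Lambda=\det(\pi_\lefts\Lambda\iota_\lefts)\det(\pi_\centers\Lambda\iota_\centers)$, which is equivalent to your triangular-matrix argument. Add the dimension-matching step and your proof is complete.
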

\begin{proof}
  We have
  \[
    \left(\dim \upperX_\centers(\hat0) = \dim \upperX_\centers(\hat1)\right) = \dim\Image \upperX(\hat0 \leq \hat1)
    =
    \dim\Image \lowerX(\hat0 \leq \hat1) = \left(\dim \lowerX_\centers(\hat0) = \dim \lowerX_\centers(\hat1)\right),
  \]
  and thus the blocks in question are square matrices. Furthermore,
  the block diagonal form implies that
  $\det\Lambda = \det(\pi_\lefts \Lambda \iota_\lefts) \det(\pi_\centers \Lambda \iota_\centers)$
  and
  $\det\Gamma = \det(\pi_\centers \Gamma \iota_\centers)\det(\pi_\rights \Gamma \iota_\rights)$.
  The result follows from the invertibility of $\Lambda$ and $\Gamma$.
\end{proof}

We gather the two block matrices of Equations~\eqref{eq:lambdablock},\ \eqref{eq:gammablock}
into a single block matrix
\begin{equation}
  \label{eq:blockmatrix}
  \begin{bmatrix}
    \pi_\lefts \Lambda \iota_\lefts & \pi_\lefts \Lambda \iota_\centers & \\
    0  & \pi_\centers \Lambda \iota_\centers = \pi_\centers \Gamma \iota_\centers & \pi_\centers \Gamma \iota_\rights \\
    & 0 & \pi_\rights \Gamma \iota_\rights
  \end{bmatrix}
\end{equation}
where the upper-right block and the lower-left block are not defined.
This can be interpreted as a notational convenience,
or alternatively can be thought of as a block matrix with some blocks left always empty or ``strongly zero''.
When we perform matrix row or column operations on such block matrices,
strongly zero blocks remain zero no matter what operations are done to them,
in contrast to zero blocks that may become nonzero.
To simplify the notation, and denote the possible block statuses as we perform matrix operations,
we adapt the notation of \cite{asashiba2019matrix}.
\begin{notation}
  We write
  \begin{itemize}
  \item $*$ for unprocessed blocks,
  \item $\n$ for strongly zero blocks,
  \item $E$ for identity blocks (of appropriate sizes),
    and
  \item $0$ for zero blocks (of appropriate sizes).
  \end{itemize}
  The blocks marked as $\n$, $E$, and $0$ are considered processed.
\end{notation}
The single block matrix will then be expressed as
\begin{equation}
  \label{eq:blockmatrix2}
  \mattikz{
    \matheader{
      * \& * \& \n \\
      0 \& * \& *  \\
      \n \& 0 \& *  \\
    };
    % \node[right=6pt] (LR) at (p-1-3.east |- p-2-3.south east) {};
    % \rowarrow{[<->](LR |- p-2-3.east)}{(LR |- p-1-3.east)};
  }.
\end{equation}

Next, let us consider the permissible matrix operations that preserve the interval-decomposed form
of the upper and lower parts.
We reuse the permissible operations discussed in \cite[{Subsection~3.3 and Definition~6}]{asashiba2019matrix}.
Before going into the details, we note the following differences with \cite{asashiba2019matrix}.
\begin{enumerate}
\item As opposed to \cite{asashiba2019matrix} which considers block matrices consisting of morphisms between
  representations of an $A$-type poset, $\Lambda$ and $\Gamma$ are simply linear maps between vector spaces.
  However, since $\Lambda$ and $\Gamma$ are attached to representations of $A$-type poset
  ($\upperp$ and $\lowerp$, at $\hat0$ and $\hat1$), we use only the operations permitted by this additional structure.
\item In contrast to \cite{asashiba2019matrix} which considers all intervals,
  $\Lambda$ and $\Gamma$ are attached at $\hat0$ and $\hat1$ respectively,
  and thus we only need to consider the structure of intervals of the form $k_{[\hat0, ?]}$ and $k_{[?, \hat1]}$. These are the injective and projective indecomposable representations of an $A$-type poset.
\end{enumerate}

Let $Q$ be either $\upperp$ or $\lowerp$ (an $A$-type poset).
To treat both cases in parallel to facilitate the explanation, we write
\[
  Q:
  \begin{tikzcd}[row sep=0.1em, outer sep=0pt]
    \hat0 =: \Qf{1} \rar & \Qf{2} \rar & \cdots \rar & \Qf{n}-\Qf{1} \rar & \Qf{n} := \hat1
  \end{tikzcd}
\]
where $\Qf{n}$ is $n+2$ for $\upperp$ and $m+2$ for $\lowerp$.
That is, $\Qf{1} := \hat0$, $\Qf{n} := \hat1$, and the intermediate numbers represent the points in-between.

Then, it is well known
that $\dim\Hom(k_I,k_J)$ is either $0$ or $1$ for intervals $I,J \in \Int{Q}$ (see for example \cite[Lemma~1]{asashiba2019matrix}).
Following \cite{asashiba2019matrix} we write
\[
  I \reltoeq J \text{ if and only if } \dim\Hom(k_I,k_J) \neq 0
\]
and write $I \relto J$ whenever $I \reltoeq J$ and $I \neq J$.
We also fix the following morphism
$f\itoi{a,b,c,d} \colon  k_{\Intp{a,b}} \rightarrow k_{\Intp{c,d}}$ defined by
\begin{equation*}
  \left(f\itoi{a,b,c,d}\right)_{\Qf{i}} =
  \begin{cases}
    1_k & \text{for } \Qf{i} \in \Intp{a,b} \cap \Intp{c,d}, \\
    0 & \text{otherwise}
  \end{cases}
\end{equation*}
for each pair of intervals $\Intp{a,b}$,
$\Intp{c,d}$ with $\Intp{a,b} \reltoeq \Intp{c,d}$.
If $\Intp{a,b} \reltoeq \Intp{c,d}$,
$\Intp{c,d} \reltoeq \Intp{e,f}$, \emph{and} $\Intp{a,b} \reltoeq \Intp{e,f}$, then
it is immediate that
\[
  f\itoi{a,b,e,f} = f\itoi{c,d,e,f} f\itoi{a,b,c,d}.
\]
Note that $\reltoeq$ defines a reflexive and antisymmetric (but in general non-transitive) relation on $\Int{Q}$.

Restricted to the intervals $[?, \Qf{n}]$ corresponding to the projective representations
and the intervals $[\Qf{1}, ?]$ corresponding to the injective representations, we have
\[
  [\Qf{n}, \Qf{n}] \relto [\Qf{n}-\Qf{1}, \Qf{n}] \relto \cdots \relto [\Qf{2}, \Qf{n}]
  \relto [\Qf{1}, \Qf{n}]
  \relto [\Qf{1}, \Qf{n}-\Qf{1}] \relto \cdots \relto [\Qf{1}, \Qf{2}] \relto [\Qf{1}, \Qf{1}].
\]
or
\begin{equation}
  \label{eq:order}
  \Intp{1,1}
  % [\Qf{1}, \Qf{1}]
  \oprelto
  [\Qf{1}, \Qf{2}]
  \oprelto
  \cdots
  \oprelto
  [\Qf{1}, \Qf{n}-\Qf{1}]
  \oprelto
  [\Qf{1}, \Qf{n}]
  \oprelto
  [\Qf{2}, \Qf{n}]
  \oprelto
  \cdots
  \oprelto
  [\Qf{n}-\Qf{1}, \Qf{n}]
  \oprelto
  [\Qf{n}, \Qf{n}]
  .
\end{equation}
Due to the lack of transitivity, this is not a total order. However, we can extend it to a total order.
Then, to be able to easily describe the permissible operations for our matrix problem, we now impose
an order on the bases for the domains (columns) and codomains (rows) of $\Lambda$ and $\Gamma$ by following the above total order.

In more detail (in the new notation for points of the poset $Q = \upperp \text{ or } \lowerp$),
recall that $\Lambda \colon \upperX(\Qf{1}) \rightarrow \lowerX(\Qf{1})$
where
$\upperX(\Qf{1}) = \upperX_\lefts(\Qf{1}) \oplus \upperX_\centers(\Qf{1})$
with $\upperX_\lefts$ a direct sum of interval modules $k_{[\Qf{1}, ?]}$ but not equal to $k_{[\Qf{1}, \Qf{n}]}$,
and $\upperX_\centers$ a direct sum of interval modules $k_{[\Qf{1}, \Qf{n}]}$.
Now, $\upperX_\lefts(\Qf{1})$ is just a direct sum of copies of the vector space $k$, each
coming from the vector space $k$ at point $\Qf{1}$ of some representation $k_{[\Qf{1}, ?]}$.
We order the standard basis by the order~\eqref{eq:order} on the corresponding intervals $[\Qf{1}, ?]$ from whence they came.
Note that each interval $[\Qf{1}, ?]$ can appear with multiplicity in $\upperX_\lefts$.
We do not impose any particular order within basis elements corresponding to the same interval.
The order for the basis of the other vector spaces coming from $\upperX$ (and $\lowerX$) are defined similarly (i.e.~with reference to the order~\eqref{eq:order}).

We can decorate the rows and columns of the block matrix~\eqref{eq:blockmatrix}(its abbreviated form \eqref{eq:blockmatrix2})
to indicate the original intervals which they correspond to. Note once again that each
interval can appear multiple times, and thus each interval denotes multiple rows (or columns).
\begin{equation}
  \label{eq:labelledblockmatrix}
  \mattikz{
    \matheaderwide{
      *           \& \cdots \& *      \&                * \& \phantom{*} \& \phantom{*} \& \phantom{*} \\
      \vdots      \& \ddots \& \vdots \& \vdots           \&             \&             \&  \\
      *           \& \cdots \& *      \&                * \&             \&             \&  \\
      0           \& \cdots \& 0      \& *                \& *           \& \cdots      \& * \\
      \phantom{*} \&        \&        \& 0                \& *           \& \cdots      \& * \\
      \phantom{*} \&        \&        \& \vdots           \& \vdots      \& \ddots      \& \vdots \\
      \phantom{*} \&        \&        \& 0                \& *           \& \cdots      \& * \\
    };
    \node[anchor = south east] (p-0-0) at (p-2-1.west |- p-1-1.north west) {};
    \foreach[count = \i] \v in {
      \({[\Qf1,\Qf1]}\),
      \({\vphantom{\Qf{1}}\smash{\vdots}\;\;}\),
      \({[\Qf{1},\Qf{n}{-}\Qf{1}]}\),
      \({[\Qf{1},\Qf{n}]}\),
      \({[\Qf{2},\Qf{n}]}\),
      \({\vphantom{\Qf{1}}\smash{\vdots}\;\;}\),
      \({[\Qf{n},\Qf{n}]}\)
      }
    {
      \node[left=3.5ex] at (p-\i-1) {\v};
    }
    \foreach[count = \i] \v in {
      \({[\Qf1,\Qf1]}\),
      \({\cdots}\),
      \({[\Qf{1},\Qf{n}{-}\Qf{1}]}\),
      \({[\Qf{1},\Qf{n}]}\),
      \({[\Qf{2},\Qf{n}]}\),
      \({\cdots}\),
      \({[\Qf{n},\Qf{n}]}\)
      }
      {
       \node[above=1.5ex] at (p-1-\i) {\v};
     }
  }.
\end{equation}

\begin{definition}
  \label{defn:permissible}
Following \cite[Definition~6]{asashiba2019matrix}, we declare as \emph{permissible} the following elementary row or column operations on the block matrix~\eqref{eq:blockmatrix2}.
\begin{enumerate}
\item All elementary operations within rows (or columns) with the same label are permissible.
\item If $[\Qf{a},\Qf{b}] \oprelto [\Qf{c},\Qf{d}]$
  then additions of multiples of columns labelled $[\Qf{a},\Qf{b}]$ to columns labelled $[\Qf{c},\Qf{d}]$ are permissible.
\item (Dually,) if $[\Qf{a},\Qf{b}] \oprelto [\Qf{c},\Qf{d}]$
  then additions of multiples from rows labelled $[\Qf{c},\Qf{d}]$ to rows labelled $[\Qf{a},\Qf{b}]$ are permissible.
\end{enumerate}
\end{definition}

Let us recap the above considerations as one Definition.
\begin{definition}
  \label{defn:matrixproblem}
  Let $V \in \rep(B)$ and
  $\phi \colon V_\upperp \rightarrow  \bigoplus_{I \in \Int{\upperp}} k_{I}^{m_\upperp(I)}$,
  $\psi  \colon V_\lowerp \rightarrow \bigoplus_{I \in \Int{\lowerp}} k_{I}^{m_\lowerp(I)}$
  be isomorphisms to interval decompositions of the upper and lower parts of $V$.
  We call the block matrix~\eqref{eq:labelledblockmatrix} (the block matrix~\eqref{eq:blockmatrix} with the order of its rows and columns imposed by $\oprelto$), together with the the permissible operations in Definition~\ref{defn:permissible},
  a \emph{block matrix problem associated to the pair $(\phi,\psi)$}.
\end{definition}
Note that in the above definition, there can be multiple different block matrix problems associated to the same $(\phi, \psi)$. These arise because we have not fixed any particular order among the rows (and columns) with the same interval labels. Up to permutations of rows and columns with the same label, there is essentially only one block matrix problem associated to $(\phi, \psi)$. We denote this by $M(\phi,\psi)$, allowing for some ambiguity on the order of the rows and columns with the same labels.

\subsection{Obtaining a normal form}
\label{subsec:normalform}
Next, we perform permissible row and column operations on the block matrix of $M(\phi,\psi)$
to transform it to a normal form. 
We show that this normal form is a block matrix problem of a pair $(\phi',\psi')$,
and that we can then combine the isomorphisms $\phi'$ and $\psi'$ to get an interval decomposition of $V$.

The block with rows and columns labelled $[\Qf{1},\Qf{n}]$ (the central block) is nothing but
$\pi_\centers \Lambda \iota_\centers  = \pi_\centers \Gamma \iota_\centers$, which was shown to be invertible in
Lemma~\ref{lem:invertible}. Using elementary row and column operations within the rows labelled $[\Qf{1},\Qf{n}]$ and the columns labelled $[\Qf{1},\Qf{n}]$, we can transform it to an identity matrix $E$, giving the following form for the block matrix.
\[
  \mattikz{
    \matheaderwide{
      *           \& \cdots \& *      \&                * \& \phantom{*} \& \phantom{*} \& \phantom{*} \\
      \vdots      \& \ddots \& \vdots \& \vdots           \&             \&             \&  \\
      *           \& \cdots \& *      \&                * \&             \&             \&  \\
      0           \& \cdots \& 0      \& E                \& *           \& \cdots      \& * \\
      \phantom{*} \&        \&        \& 0                \& *           \& \cdots      \& * \\
      \phantom{*} \&        \&        \& \vdots           \& \vdots      \& \ddots      \& \vdots \\
      \phantom{*} \&        \&        \& 0                \& *           \& \cdots      \& * \\
    };
    \node[anchor = south east] (p-0-0) at (p-2-1.west |- p-1-1.north west) {};
    \foreach[count = \i] \v in {
      \({[\Qf1,\Qf1]}\),
      \({\vphantom{\Qf{1}}\smash{\vdots}\;\;}\),
      \({[\Qf{1},\Qf{n}{-}\Qf{1}]}\),
      \({[\Qf{1},\Qf{n}]}\),
      \({[\Qf{2},\Qf{n}]}\),
      \({\vphantom{\Qf{1}}\smash{\vdots}\;\;}\),
      \({[\Qf{n},\Qf{n}]}\)
    }
    {
      \node[left=3.5ex] at (p-\i-1) {\v};
    }
    \foreach[count = \i] \v in {
      \({[\Qf1,\Qf1]}\),
      \({\cdots}\),
      \({[\Qf{1},\Qf{n}{-}\Qf{1}]}\),
      \({[\Qf{1},\Qf{n}]}\),
      \({[\Qf{2},\Qf{n}]}\),
      \({\cdots}\),
      \({[\Qf{n},\Qf{n}]}\)
    }
    {
      \node[above=1.5ex] at (p-1-\i) {\v};
    }
  }.
\]

Then, using elementary column operations adding appropriate multiples
of the columns labelled $[\Qf{1},\Qf{n}]$ to the columns labelled
$[\Qf{2},\Qf{n}], \hdots, [\Qf{n},\Qf{n}]$,
we can zero out all the blocks to the right of the central $E$.
Similarly, using elementary row operations adding appropriate multiples
of the rows labelled $[\Qf{1},\Qf{n}]$ to the rows labelled
$[\Qf{1},\Qf{n}{-}\Qf{1}], \hdots, [\Qf{1},\Qf{1}]$,
we can zero out all the blocks above the central $E$.
Note that these are all permissible operations.
The block matrix becomes
\[
  \mattikz{
    \matheaderwide{
      *           \& \cdots \& *      \& 0      \& \phantom{*} \& \phantom{*} \& \phantom{*} \\
      \vdots      \& \ddots \& \vdots \& \vdots \&             \&             \&  \\
      *           \& \cdots \& *      \& 0      \&             \&             \&  \\
      0           \& \cdots \& 0      \& E      \& 0           \& \cdots      \& 0 \\
      \phantom{*} \&        \&        \& 0      \& *           \& \cdots      \& * \\
      \phantom{*} \&        \&        \& \vdots \& \vdots      \& \ddots      \& \vdots \\
      \phantom{*} \&        \&        \& 0      \& *           \& \cdots      \& * \\
    };
    \node[anchor = south east] (p-0-0) at (p-2-1.west |- p-1-1.north west) {};
    \foreach[count = \i] \v in {
      \({[\Qf1,\Qf1]}\),
      \({\vphantom{\Qf{1}}\smash{\vdots}\;\;}\),
      \({[\Qf{1},\Qf{n}{-}\Qf{1}]}\),
      \({[\Qf{1},\Qf{n}]}\),
      \({[\Qf{2},\Qf{n}]}\),
      \({\vphantom{\Qf{1}}\smash{\vdots}\;\;}\),
      \({[\Qf{n},\Qf{n}]}\)
    }
    {
      \node[left=3.5ex] at (p-\i-1) {\v};
    }
    \foreach[count = \i] \v in {
      \({[\Qf1,\Qf1]}\),
      \({\cdots}\),
      \({[\Qf{1},\Qf{n}{-}\Qf{1}]}\),
      \({[\Qf{1},\Qf{n}]}\),
      \({[\Qf{2},\Qf{n}]}\),
      \({\cdots}\),
      \({[\Qf{n},\Qf{n}]}\)
    }
    {
      \node[above=1.5ex] at (p-1-\i) {\v};
    }
  }.
\]

At this point, we note that we can perform operations on
(the rows and columns of)
the group of blocks above-left of $E$
and on (the rows and columns of)
the group of blocks below-right of $E$
without affecting the other group (i.e.\ independently of each other).
Thus, we consider the subproblem
\[
  \mattikz{
    \matheaderwide{
      *           \& \cdots \& *      \\
      \vdots      \& \ddots \& \vdots \\
      *           \& \cdots \& *      \\
    };
    \node[anchor = south east] (p-0-0) at (p-2-1.west |- p-1-1.north west) {};
    \foreach[count = \i] \v in {
      \({[\Qf1,\Qf1]}\),
      \({\vphantom{\Qf{1}}\smash{\vdots}\;\;}\),
      \({[\Qf{1},\Qf{n}{-}\Qf{1}]}\)
    }
    {
      \node[left=3.5ex] at (p-\i-1) {\v};
    }
    \foreach[count = \i] \v in {
      \({[\Qf1,\Qf1]}\),
      \({\cdots}\),
      \({[\Qf{1},\Qf{n}{-}\Qf{1}]}\)
    }
    {
      \node[above=1.5ex] at (p-1-\i) {\v};
    }
  }
\]
where left-to-right block operations and below-to-above block operations
are permissible (Definition~\ref{defn:permissible}),
and
where by Lemma~\ref{lem:invertible} the entire matrix is invertible.
In Definition~\ref{defn:permissible}, elementary row (resp.\ column) operations
among rows (resp.\ columns) with the same label are all permissible,
but we do not need all of them (see the next Lemma).
% That is, the grouping of rows (resp.\ columns) can be forgotten.

The following is from basic linear algebra.
\begin{lemma}
  \label{lem:topermutation}
  Let $A$ be an invertible matrix.
  Then, using only the elementary operations of 
  \begin{enumerate}
  \item multiplying a row (or column) by a nonzero scalar,
  \item adding a multiple of a row to a row above it, and
  \item adding a multiple of a column to a column to its right,
  \end{enumerate}
  $A$ can be transformed to a permutation matrix.
\end{lemma}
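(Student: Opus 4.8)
The plan is to reduce $A$ to a permutation matrix by processing it column by column (equivalently, by pivoting from the bottom-left and working upward/rightward). The key observation is that the allowed operations — scaling a row or column, adding a multiple of a row to a \emph{higher} row, adding a multiple of a column to a column \emph{to its right} — are exactly what is needed to clear entries using a pivot placed at the ``extreme'' position of its column.

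First I would argue the following: since $A$ is invertible, its last row is nonzero, so there is a rightmost column, say column $j$, whose entry in the last row is nonzero. Using rightward column additions I can move (a scalar multiple of) column $j$ into every column to its right to clear their last-row entries, and then scale column $j$ so that its last-row entry is $1$. Next, using upward row additions — adding multiples of the last row to rows above it — I clear all entries of column $j$ except the last. At this point column $j$ is a standard basis vector $e_{\text{(last row)}}$, and the last row has a single nonzero entry (a $1$ in column $j$). I claim no subsequent operation among the remaining rows/columns will disturb this: later I only use rows strictly above the last row (so the last row is never modified as a source or target of an addition except scaling, which I will not need again), and I only add columns to columns strictly to the left of... — more carefully, I should order the induction so that once a pivot column is fixed, it is skipped entirely. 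Concretely, after fixing column $j$ and the last row, delete that row and column and recurse on the resulting $(n-1)\times(n-1)$ invertible submatrix, which still satisfies the same constraints (the forbidden directions are inherited). This produces, after $n$ steps, a matrix with exactly one $1$ in each row and column and zeros elsewhere, i.e.\ a permutation matrix.

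The one point requiring care — and the main obstacle — is verifying that the recursion is legitimate: namely that clearing column $j$ and row $n$ as above, and then never again touching them, is consistent with the permitted operations on the remaining submatrix. This works because (i) column operations only add \emph{into} columns to the right of the source, so when we later pick a pivot column to the left of $j$, we never need to use column $j$ as a source into it, and adding other columns into (the already-cleared) column $j$ is simply not performed; and (ii) row operations only add \emph{into} rows above the source, so once row $n$ is the bottom row and is cleared except for its pivot, it is never used as a target, and we only use it as a source when clearing its own column $j$, which is already done. Thus each fixed pivot is genuinely frozen, and the induction on the size of the matrix goes through. The base case $n = 1$ is trivial: a $1\times 1$ invertible matrix is a nonzero scalar, which we rescale to $1$.
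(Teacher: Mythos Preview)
Your overall strategy is sound and essentially dual to the paper's (the paper iterates over columns left to right, choosing for each the \emph{lowest} nonzero row as pivot; you iterate over rows bottom to top, choosing a pivot column). However, your pivot choice is backward: you select the \emph{rightmost} nonzero entry in the last row, but since the allowed column additions go only \emph{rightward}, a pivot can only clear entries to its right, never to its left. With your choice the entries $A_{n,1},\ldots,A_{n,j-1}$ are never touched, so your claim that ``the last row has a single nonzero entry (a $1$ in column $j$)'' is false in general, and the recursion does not freeze row $n$.

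Concretely, take $A = \left[\begin{smallmatrix} 1 & 0 \\ 1 & 1 \end{smallmatrix}\right]$. Your procedure picks $j=2$ (rightmost nonzero in row $2$), finds nothing to clear to the right, scales trivially, and finds column $2$ already equal to $e_2$. The recursion on the $1\times 1$ minor $(1)$ terminates immediately, and the full matrix is returned unchanged---not a permutation matrix. The fix is simply to choose the \emph{leftmost} nonzero entry in the current bottom row; then entries to its left are already zero, rightward column additions clear the rest of the row, and upward row additions clear the column, so both are genuinely frozen. With that correction your argument goes through and matches the paper's approach up to interchanging the roles of rows and columns.
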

For completeness, we provide an algorithm in Algorithm~\ref{algo:topermutation}, which
we call \textproc{OneWayReduce} because only operations in one direction (one-way) are allowed.
\begin{algorithm}[H]
  \caption{Reduction of an invertible $d\times d$ matrix $A$}
  \label{algo:topermutation}
  \begin{algorithmic}[1]
    \Procedure{OneWayReduce}{$A$}
    \For{$\texttt{col} \gets 1, 2, \hdots, d$}
    \State $\texttt{row} \gets \max\{i \mid A_{i, \texttt{col}} \neq 0 \}$ \label{line:topermutation_rowchoice}
    \State Multiply row $\texttt{row}$ by $(A_{\texttt{row}, \texttt{col}})^{-1}$
    \State Use entry at $(\texttt{row},\texttt{col})$ to zero out all entries to its right using column operations.
    \State Use entry at $(\texttt{row},\texttt{col})$ to zero out all entries above it using row operations.
    \State \Return $A$
    \EndFor
    \EndProcedure
  \end{algorithmic}
\end{algorithm}
In Algorithm~\ref{algo:topermutation}, the fact that $A$ is invertible guarantees that
the set $\{i \mid A_{i, \texttt{col}} \neq 0 \}$ in line~\ref{line:topermutation_rowchoice}
is always nonempty, and thus $\texttt{row}$ is well-defined (a valid row number).
Clearly, Algorithm~\ref{algo:topermutation} terminates,
and returns a matrix that satisfies the following property:
each column has exactly one entry ``$1$'', with all other entries ``$0$'',
and the ``$1$'' entries of different columns are never in the same row.
That is, a permutation matrix is returned.

The block matrix becomes
\[
  M' =
  \mattikz{
    \matheaderwide{
      \phantom{*}  \& \phantom{*}  \& \phantom{*} \& 0      \& \phantom{*} \& \phantom{*} \& \phantom{*} \\
      \phantom{*} \& \phantom{*}  \& \& \vdots \&             \&             \&  \\
      \phantom{*} \&  \& \phantom{*}      \& 0      \&             \&             \&  \\
      0           \& \cdots \& 0      \& E      \& 0           \& \cdots      \& 0 \\
      \phantom{*} \&        \&        \& 0      \& \phantom{*}            \&       \&  \\
      \phantom{*} \&        \&        \& \vdots \&       \& \phantom{*}       \&  \\
      \phantom{*} \&        \&        \& 0      \&            \&       \& \phantom{*}  \\
    };
    \node[anchor = south east] (p-0-0) at (p-2-1.west |- p-1-1.north west) {};
    \node[fit=(p-1-1.center)(p-3-3.center),draw,anchor=center,text depth=0em,minimum height=4.5em,minimum width=6em]{$\scalebox{2}{\ensuremath{P_\lefts}}$};
    \node[fit=(p-5-5.center)(p-7-7.center),draw,anchor=center,text depth=0em,minimum height=4.5em,minimum width=6em]{$\scalebox{2}{\ensuremath{P_\rights}}$};
    \foreach[count = \i] \v in {
      \({[\Qf1,\Qf1]}\),
      \({\vphantom{\Qf{1}}\smash{\vdots}\;\;}\),
      \({[\Qf{1},\Qf{n}{-}\Qf{1}]}\),
      \({[\Qf{1},\Qf{n}]}\),
      \({[\Qf{2},\Qf{n}]}\),
      \({\vphantom{\Qf{1}}\smash{\vdots}\;\;}\),
      \({[\Qf{n},\Qf{n}]}\)
    }
    {
      \node[left=3.5ex] at (p-\i-1) {\v};
    }
    \foreach[count = \i] \v in {
      \({[\Qf1,\Qf1]}\),
      \({\cdots}\),
      \({[\Qf{1},\Qf{n}{-}\Qf{1}]}\),
      \({[\Qf{1},\Qf{n}]}\),
      \({[\Qf{2},\Qf{n}]}\),
      \({\cdots}\),
      \({[\Qf{n},\Qf{n}]}\)
    }
    {
      \node[above=1.5ex] at (p-1-\i) {\v};
    }
  }
\]
where $P_\lefts$ and $P_\rights$ are permutation matrices.
Furthermore, by recording the operations performed on the original block matrix $M(\phi, \psi)$,
this can be expressed as
\begin{equation}
  \label{eq:matrixtrans}
  M' = T' M(\phi,\psi) T
\end{equation}
for some invertible block upper-triangular matrices $T$ and $T'$ of the forms
\begin{equation}
  \label{eq:Uform}
  \scalebox{0.75}{
  \mattikz{
    \matheaderwide{
      *  \& \cdots  \& {*} \& 0      \& \phantom{*} \& \phantom{*} \& \phantom{*} \\
       \& \ddots  \& \vdots \& \vdots \&             \&             \&  \\
      0 \&  \& *      \& 0      \&             \&             \&  \\
      0           \& \cdots \& 0      \& *     \& *           \& \cdots      \& * \\
      \phantom{*} \&        \&        \& 0      \& {*}    \& \cdots      \& * \\
      \phantom{*} \&        \&        \& \vdots \&       \& \ddots       \& \vdots \\
      \phantom{*} \&        \&        \& 0      \& 0         \&       \& *  \\
    };
    \node[anchor = south east] (p-0-0) at (p-2-1.west |- p-1-1.north west) {};
    \foreach[count = \i] \v in {
      \({[\Qf1,\Qf1]}\),
      \({\vphantom{\Qf{1}}\smash{\vdots}\;\;}\),
      \({[\Qf{1},\Qf{n}{-}\Qf{1}]}\),
      \({[\Qf{1},\Qf{n}]}\),
      \({[\Qf{2},\Qf{n}]}\),
      \({\vphantom{\Qf{1}}\smash{\vdots}\;\;}\),
      \({[\Qf{n},\Qf{n}]}\)
    }
    {
      \node[left=3.5ex] at (p-\i-1) {\v};
    }
    \foreach[count = \i] \v in {
      \({[\Qf1,\Qf1]}\),
      \({\cdots}\),
      \({[\Qf{1},\Qf{n}{-}\Qf{1}]}\),
      \({[\Qf{1},\Qf{n}]}\),
      \({[\Qf{2},\Qf{n}]}\),
      \({\cdots}\),
      \({[\Qf{n},\Qf{n}]}\)
    }
    {
      \node[above=1.5ex] at (p-1-\i) {\v};
    }
  }
  }
  \text{ and }
  \scalebox{0.75}{
  \mattikz{
    \matheaderwide{
      *  \& \cdots  \& {*} \& *      \& \phantom{*} \& \phantom{*} \& \phantom{*} \\
       \& \ddots  \& \vdots \& \vdots \&             \&             \&  \\
      0 \&  \& *      \& *      \&             \&             \&  \\
      0           \& \cdots \& 0      \& *     \& 0           \& \cdots      \& 0 \\
      \phantom{*} \&        \&        \& 0      \& {*}    \& \cdots      \& * \\
      \phantom{*} \&        \&        \& \vdots \&       \& \ddots       \& \vdots \\
      \phantom{*} \&        \&        \& 0      \& 0         \&       \& *  \\
    };
    \node[anchor = south east] (p-0-0) at (p-2-1.west |- p-1-1.north west) {};
    \foreach[count = \i] {\v} in {
      \({[\Qf1,\Qf1]}\),
      \({\vphantom{\Qf{1}}\smash{\vdots}\;\;}\),
      \({[\Qf{1},\Qf{n}{-}\Qf{1}]}\),
      \({[\Qf{1},\Qf{n}]}\),
      \({[\Qf{2},\Qf{n}]}\),
      \({\vphantom{\Qf{1}}\smash{\vdots}\;\;}\),
      \({[\Qf{n},\Qf{n}]}\)
    }
    {
      \node[left=3.5ex] at (p-\i-1) {\v};
    }
    \foreach[count = \i] \v in {
      \({[\Qf1,\Qf1]}\),
      \({\cdots}\),
      \({[\Qf{1},\Qf{n}{-}\Qf{1}]}\),
      \({[\Qf{1},\Qf{n}]}\),
      \({[\Qf{2},\Qf{n}]}\),
      \({\cdots}\),
      \({[\Qf{n},\Qf{n}]}\)
    }
    {
      \node[above=1.5ex] at (p-1-\i) {\v};
    }
  }
}
\end{equation}
respectively.

\subsection{From the normal form to the interval decomposition}
\label{subsec:normal_form_to_decompo}
The form of $M'$ gives matchings:
\begin{enumerate}
\item between intervals $[\Qf{1},?]$ of the upper and lower path,
\item between intervals $[?,\Qf{n}]$ of the upper and lower path,
\item between intervals $[\Qf{1},\Qf{n}]$ of the upper and lower path,
\end{enumerate}
given by the permutation matrices $P_\lefts$, $P_\rights$, and the identity matrix $E$, respectively.
We show that by ``gluing'' these matched intervals,
and together with the intervals not intersecting $\Qf{1}$ nor $\Qf{n}$
(the terms $\upperX_\others$ and $\lowerX_\others$ in the partition of the intervals of $\upperX$ and $\lowerX$),
we obtain a bipath persistence module $W$ that is explicitly a direct sum of interval bipath persistence modules
and is isomorphic to the original bipath persistence module $V$.
For computations, we only need to do so (i.e.\ construct/write out the intervals of $W$).
Below, for convenience, we summarize this and explicitly write out what we mean by ``gluing''.
Note that we return to the original indexing vertices
$\{\hat0, 1,\hdots,i,\hdots,n,\hat1\}$ for the upper path $U$
and
$\{\hat0, 1',\hdots,j',\hdots,m',\hat1\}$ for the lower path $D$.
We also recall Definition~\ref{def:LRUD} for intervals in the bipath poset.
\begin{construction}
  \label{construction:intervals}
  Given the considerations of the above discussion, construct the following intervals of the bipath poset $B$ (allowing for multiplicities).
  \begin{enumerate}
  \item For each matched pair $[\hat0, \hat1]$ in $U$ and $[\hat0, \hat1]$ in $D$,
    construct the interval $B$ (i.e.\ the bipath poset itself).
    These correspond to region $\mathrm{B}$ in Figure~\ref{fig:bipathPD}.
  
    \item For each matched pair $[\hat0, i]$ in $U$ and $[\hat0, j']$ in $D$,
    construct the interval $\langle j', i \rangle\in \mathbb{L}(B)$.
    These correspond to intervals in region $\mathbb{L}$ of Figure~\ref{fig:bipathPD}. 
  \item For each matched pair $[i, \hat1]$ in $U$ and $[j', \hat1]$ in $D$,
    construct the interval $\langle i, j' \rangle\in \mathbb{R}(B)$.
    These correspond to intervals in region $\mathbb{R}$ of Figure~\ref{fig:bipathPD}.
  \item For each interval $[i_1,i_2]$ in $\upperX_\others$ (intervals of $U$ not intersecting $\hat0$ nor $\hat1$),
    construct the interval $\langle i_1, i_2 \rangle\in \mathbb{U}(B)$.
    These correspond to intervals in region $\mathbb{U}$ of Figure~\ref{fig:bipathPD}.
  \item For each interval $[j'_1,j'_2]$ in $\lowerX_\others$ (intervals of $D$ not intersecting $\hat0$ nor $\hat1$),
    construct the interval $\langle j'_2, j'_1 \rangle\in \mathbb{D}(B)$.
    These correspond to intervals in region $\mathbb{D}$ of Figure~\ref{fig:bipathPD}. 
  \end{enumerate}
\end{construction}
In the rest of this subsection, we provide the proof that this is algebraically valid.

The form of the invertible block upper triangular matrix $T$ as given above
in Equation~\eqref{eq:Uform} has only nonzero entries
in the block located in row $\Intp{a,b}$ and column $\Intp{c,d}$
only for pairs $\Intp{a,b} \opreltoeq \Intp{c,d}$.
By attaching the fixed morphisms
$
  f\itoi{a,b,c,d} 
\colon k_{\Intp{a,b}} \rightarrow k_{\Intp{c,d}}
$
to the matrix $T$ (i.e.\ use the entries of $T$ as coefficients),
we obtain an endomorphism of
$\upperX_\lefts \oplus \upperX_\centers \oplus \upperX_\rights$.
This is in fact an automorphism, because $T$ is invertible block upper-triangular.
Furthermore, taking a direct sum with identity maps for the terms $\upperX_\others$,
we obtain an automorphism for $\upperX = \upperX_\lefts \oplus \upperX_\centers \oplus \upperX_\rights \oplus \upperX_\others$, which we denote by
\[
  \hat{T} \colon \upperX \rightarrow \upperX.
\]
Similiarly, from $T'$ we obtain an automorphism
\[
  \hat{T'} \colon \lowerX \rightarrow \lowerX.
\]
Then, by construction,
$M' = T' M(\phi,\psi) T$
is a block matrix problem (Definition~\ref{defn:matrixproblem})
associated with the pair of isomorphisms
$(\phi', \psi') := (\hat{T}^{-1}\phi, \hat{T'} \psi)$:
\begin{eqnarray*}
  \begin{tikzcd}
    V_\upperp \rar{\phi} & \upperX \rar{\hat{T}^{-1}} & \upperX
  \end{tikzcd}
  & (\text{in } \rep (\upperp)), \\
  \begin{tikzcd}
    V_\lowerp \rar{\psi} & \lowerX \rar{\hat{T'}} & \lowerX
  \end{tikzcd}
  & (\text{in } \rep (\lowerp)).
\end{eqnarray*}

Recalling the poset $B'$ in Diagram~\eqref{diag:extendedbipath},
the inclusion functor $\iota \colon  \rep (B) \rightarrow \rep (B')$
and Lemma~\ref{lem:formalism1},
$\phi'$ and $\psi'$ induces an isomorphism $\iota(V) \rightarrow Z'$
where $Z'$ is the representation

\[
  \begin{tikzcd}[ampersand replacement=\&]
    \upperX(\hat{0}) \dar[swap]
    {\Lambda' :=      
      \smat{
        P_\lefts  & 0 \\
        0 & E
      }}
    \rar \& \upperX(1) \rar \& \cdots \rar \& \upperX(n) \rar  \& \upperX(\hat{1})
    \dar
    {\Gamma' :=
      \smat{
        E & 0 \\
        0 & P_\rights
      }} \\
    \lowerX(\hat{0}') \rar \& \lowerX(1') \rar \& \cdots \rar \& \lowerX(m') \rar  \&  \lowerX(\hat{1}')
  \end{tikzcd}
\]
  by construction (i.e.\ from the normal form of the block matrix problem).

  By permuting the summands, say of $X$ (write the permuted representation as $X'$), we obtain an isomorphism $Z' \cong Z''$
  where the ``vertical'' maps of $Z''$ are identity, that is
\[
  Z'':
  \begin{tikzcd}[ampersand replacement=\&]
    \upperX'(\hat{0}) \dar[swap]
    {\Lambda'' := 1}
    \rar \& \upperX'(1) \rar \& \cdots \rar \& \upperX'(n) \rar  \& \upperX'(\hat{1})
    \dar
    {\Gamma' := 1} \\
    \lowerX(\hat{0}') \rar \& \lowerX(1') \rar \& \cdots \rar \& \lowerX(m') \rar  \&  \lowerX(\hat{1}')
  \end{tikzcd}.
\]
  By the definition of the inclusion functor $\iota$, such a representation $Z''$ can be written as
  $Z'' = \iota(W)$ where $W \in \rep (B)$ is:
  \[    
    \begin{tikzcd}[row sep=0.1em, inner sep=0pt]
      & \upperX'(1) \rar & \cdots \rar & \upperX'(n) \rar  & \upperX'(\hat{1}) \ar[dr] & \\
      \left(\upperX'(\hat{0}) = \lowerX(\hat0)\right) \ar[ur] \ar[dr] & & & & & \left(\upperX'(\hat{1}) = \lowerX(\hat1)\right) \\    
      & \lowerX(1') \rar & \cdots \rar & \lowerX(m') \rar  &  \lowerX(\hat{1}') \ar[ur]&
    \end{tikzcd},
  \]
  i.e.\ defined by the restriction of $Z''$ to $B$.
  Combining, we obtain
  \[
    \iota(V) \cong Z' \cong Z'' = \iota(W)
  \]
  in $\rep (B')$.
  By Lemma~\ref{lem:formalism0}, we conclude that 
  \[
    V \cong W
  \]
  in $\rep (B)$, and we note that $W$ is (explicitly) a direct sum of intervals of $\rep(B)$,
  since both $\upperX'$ and $\lowerX$ are direct sums of intervals (in $\rep (\upperp)$ and $\rep (\lowerp)$ respectively).
  That is, we have computed an interval decomposition of $V$, corresponding to Construction~\ref{construction:intervals}.
  \subsection{Summary of the algorithm and an algorithm for bipath filtrations}
  \label{subsec:algosummary}
We summarize the entire procedure below as Algorithm~\ref{algo:main}.
\begin{algorithm}[H]
  \caption{Main Algorithm}
  \label{algo:main}
  \begin{algorithmic}[1]
    \Require bipath persistence module $V$
    \Procedure{IntervalDecompose}{$V$}    
    \State Interval-decompose the upper row $V_\upperp$ and lower row $V_\lowerp$.
    \State Construct the associated block matrix problem (Definition~\ref{defn:matrixproblem}).
    \State Reduce the block matrix problem to normal form (Subsection~\ref{subsec:normalform}).    
    \State \Return The intervals of Construction~\ref{construction:intervals}
    \EndProcedure
  \end{algorithmic}
\end{algorithm}

The above starts with a bipath persistence module $V$ as its input.
What if we are starting with a bipath filtration?
Consider the bipath filtration
\begin{equation*}
  S:
  \begin{tikzcd}[row sep=0.1em,column sep = 1.4em, inner sep=0pt]
    & S_1 \rar[hookrightarrow] & S_2 \rar[hookrightarrow] & \cdots \rar[hookrightarrow] & S_n \ar[dr,hookrightarrow] & \\
    S_{\hat0} \ar[ur,hookrightarrow] \ar[dr,hookrightarrow] & & & &  & S_{\hat1} \\
    & S_{1'} \rar[hookrightarrow] & S_{2'} \rar[hookrightarrow] & \cdots \rar[hookrightarrow] & S_{m'} \ar[ur,hookrightarrow] &
  \end{tikzcd}.
\end{equation*}
Applying the homology functor $H_q(-;k)$ we obtain the bipath persistence module $V = H_q(S;k)$: 
\begin{equation*}  
  \begin{tikzcd}[row sep=0.1em,column sep = 1.4em, inner sep=0pt]
    & H_q(S_1;k) \rar[] & H_q(S_2;k) \rar[] & \cdots \rar[] & H_q(S_n;k) \ar[dr] & \\
    H_q(S_{\hat0};k) \ar[ur] \ar[dr] & & & &  & H_q(S_{\hat1};k) \\
    & H_q(S_{1'};k) \rar[] & H_q(S_{2'};k) \rar[] & \cdots \rar[] & H_q(S_{m'};k) \ar[ur] &
  \end{tikzcd}
\end{equation*}
whose interval decomposition we want to compute.
Instead of explicitly computing $V$, say by pointwise computing homology vector spaces and the induced linear maps between them, we can instead apply the
standard persistent homology algorithm to
the two filtrations
\begin{equation*}    
S_\upperp\colon \  S_{\hat0} \subseteq S_1 \subseteq S_2 \subseteq \cdots
\subseteq S_n \subseteq S_{\hat1}  
\end{equation*}
and 
\begin{equation*}    
S_\lowerp\colon \  S_{\hat0} \subseteq S_{1'} \subseteq S_{2'} \subseteq \cdots
\subseteq S_{m'} \subseteq S_{\hat1}  
\end{equation*}
separately.

\begin{remark}
\label{rem:bipathfilt_const}
In addition to the intervals of the upper and lower rows,
the matrices $\Lambda$ and $\Gamma$  are needed to construct the associated block matrix problem.
For this, we do not need to explicitly compute the isomorphisms $\phi$ and $\psi$.
Instead, it suffices to record a set of representative cycles
associated to the intervals $[b,d]$ of the persistence diagrams of the upper and lower rows
(in fact, only the ones associated to the intervals $[\hat0, ?]$ and $[?, \hat1]$ are needed) together with bases for the image of the boundary at $\hat0$ and $\hat1$.
Note that the standard reduction algorithm for persistent homology
provides these information.
From these, one can compute the matrices $\Lambda$ (resp., $\Gamma$)
as change of basis matrices for the basis at $\hat0$ (resp., $\hat1$)
determined by (homology classes of) the representative cycles
of the upper row versus the one determined by the
representative cycles of the lower row.
\end{remark}

We summarize this in Algorithm~\ref{algo:main2}.
\begin{algorithm}[H]
  \caption{Main Algorithm -- for bipath filtrations}
  \label{algo:main2}
  \begin{algorithmic}[1]
    \Require Bipath filtration $S$
    \Procedure{IntervalDecompose}{$S,q$}    
    \State Compute the standard $q$th persistence diagram of $S_\upperp$ and $S_\lowerp$
    \StatexIndent[2] (with additional information noted in Remark~\ref{rem:bipathfilt_const}).
    \State Construct the associated block matrix problem with $\Lambda$ and $\Gamma$ as in Remark~\ref{rem:bipathfilt_const}.
    \State Reduce the block matrix problem to normal form (Subsection~\ref{subsec:normalform}).    
    \State \Return The intervals of Construction~\ref{construction:intervals}   
    \EndProcedure
  \end{algorithmic}
\end{algorithm}

Our main theorem is as follows (which is proven through this Section).
\begin{theorem}
  Given a bipath persistence module $V$ (resp., bipath filtration $S$),
  Algorithm~\ref{algo:main} (resp., Algorithm~\ref{algo:main2})
  correctly computes its interval decomposition (resp., $q$th bipath persistence diagram).
\end{theorem}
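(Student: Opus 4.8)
The plan is to assemble the lemmas and constructions of this section into one correctness argument, proving the statement for Algorithm~\ref{algo:main} first and then reducing Algorithm~\ref{algo:main2} to it. For Algorithm~\ref{algo:main}, I would begin by checking that line~2 is well posed: the restrictions $V_\upperp$ and $V_\lowerp$ are persistence modules over the equioriented $A$-type posets $\upperp$ and $\lowerp$, hence interval-decomposable by Theorem~\ref{thm:intervaldecomposable_iff} (equivalently, by the classical structure theorem), and isomorphisms $\phi,\psi$ onto such decompositions — together with the data needed to form $\Lambda=\psi_{\hat0}\phi_{\hat0}^{-1}$ and $\Gamma=\psi_{\hat1}\phi_{\hat1}^{-1}$ — are produced by the standard reduction algorithm. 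I would then confirm that the resulting object is genuinely a block matrix problem $M(\phi,\psi)$ in the sense of Definition~\ref{defn:matrixproblem}: by Lemma~\ref{lem:invertible} the off-diagonal blocks of $\Lambda$ and $\Gamma$ assemble into the block-triangular matrix~\eqref{eq:blockmatrix} with invertible diagonal blocks.

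Next I would verify that the reduction of Subsection~\ref{subsec:normalform} applies verbatim: clear the central block to $E$ (using Lemma~\ref{lem:invertible}), then clear the blocks above and to the right of $E$ by adding permissible multiples of the central rows/columns, then observe that the two remaining groups of blocks decouple and that each is an invertible matrix on which only the one-way operations of Lemma~\ref{lem:topermutation} (Algorithm~\ref{algo:topermutation}) are needed, producing permutation matrices $P_\lefts,P_\rights$. At each stage one checks that every operation used is permissible (Definition~\ref{defn:permissible}); termination is immediate, since Algorithm~\ref{algo:topermutation} and the finitely many preliminary clearings each terminate. Recording the operations on the two sides yields \eqref{eq:matrixtrans}, $M'=T'M(\phi,\psi)T$, with $T,T'$ invertible block upper-triangular of the form~\eqref{eq:Uform}.

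The core of the proof is the passage from the normal form $M'$ back to an interval decomposition of $V$, which I would carry out as in Subsection~\ref{subsec:normal_form_to_decompo}. One attaches the fixed morphisms $f\itoi{a,b,c,d}$ to the nonzero blocks of $T$ and of $T'$ — legitimate precisely because, by \eqref{eq:Uform}, the nonzero blocks of $T$ and $T'$ lie only in positions where the row label precedes the column label in the order~\eqref{eq:order}, together with the composition rule for the $f\itoi{a,b,c,d}$ — and adds identities on the summands $\upperX_\others$ and $\lowerX_\others$, obtaining automorphisms $\hat T$ of $\upperX$ and $\hat{T'}$ of $\lowerX$. The key point is that $M'$ is then the block matrix problem associated to the pair $(\phi',\psi'):=(\hat T^{-1}\phi,\ \hat{T'}\psi)$; granting this, passing to $\rep(B')$ via the fully faithful inclusion $\iota$ (Lemma~\ref{lem:formalism0}) and applying Lemma~\ref{lem:formalism1} to $(\phi',\psi')$ gives $\iota(V)\cong Z'$ with vertical maps $\smat{P_\lefts & 0\\ 0 & E}$ and $\smat{E & 0\\ 0 & P_\rights}$. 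Permuting the summands on one side turns these into identities, giving $Z'\cong Z''$ with $Z''=\iota(W)$ for a $W\in\rep(B)$ that is literally the direct sum of the intervals of Construction~\ref{construction:intervals} — the matchings $P_\lefts,P_\rights,E$ gluing the intervals $[\hat0,?]$, $[?,\hat1]$ and $[\hat0,\hat1]$ across the two paths, together with the already-interval summands $\upperX_\others,\lowerX_\others$. Full faithfulness of $\iota$ then yields $V\cong W$ in $\rep(B)$, which is the asserted interval decomposition; since $\mathcal{D}(V)$ (Definition~\ref{def:barcodeV}) is read off directly from it, Algorithm~\ref{algo:main} is correct.

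For Algorithm~\ref{algo:main2} it remains to justify bypassing the explicit computation of $V=H_q(S;k)$. Here $V_\upperp=H_q(S_\upperp;k)$ and $V_\lowerp=H_q(S_\lowerp;k)$ are exactly the persistence modules of the two ordinary filtrations $S_\upperp,S_\lowerp$, so the standard persistent homology algorithm computes their barcodes, and by Remark~\ref{rem:bipathfilt_const} recording representative cycles for the intervals $[\hat0,?]$ and $[?,\hat1]$ together with bases for the images of the boundary maps at $\hat0$ and $\hat1$ (all output by the standard reduction) suffices to build $\Lambda$ and $\Gamma$, hence $M(\phi,\psi)$, without ever forming $\phi$, $\psi$, or $V$. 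Running the same reduction and Construction~\ref{construction:intervals} then returns, by the argument above, an interval decomposition of $H_q(S;k)$, and $\mathcal{D}_q(S)=\mathcal{D}(H_q(S;k))$ is by Definition~\ref{def:barcodeV} precisely the associated multiset of intervals with multiplicities. The step I expect to be the main obstacle is the one flagged in the previous paragraph: proving that the permissible operations correspond exactly to post-composing $\phi$ (resp.\ $\psi$) with automorphisms of $\upperX$ (resp.\ $\lowerX$) that respect the $A$-type interval structure — equivalently, that the normal form $M'$ is again a bona fide block matrix problem of a new pair $(\phi',\psi')$ — so that the interval decompositions of the upper and lower rows survive the reduction and glue correctly at $\hat0$ and $\hat1$. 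This rests on the classification of $\Hom(k_I,k_J)$ for intervals over $A$-type posets and the composition rule for the $f\itoi{a,b,c,d}$, adapted from \cite{asashiba2019matrix} to the projective and injective indecomposables attached at $\hat0$ and $\hat1$.
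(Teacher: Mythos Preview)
Your proposal is correct and follows exactly the route the paper takes: the paper does not give a separate proof of this theorem but simply declares it ``proven through this Section,'' and what you have written is a faithful recapitulation of the argument of Section~\ref{section:matrixmethod}, invoking Lemmas~\ref{lem:formalism0}, \ref{lem:formalism1}, \ref{lem:invertible}, \ref{lem:topermutation}, Definition~\ref{defn:permissible}, and Subsection~\ref{subsec:normal_form_to_decompo} in the same order and for the same purposes. Your identification of the ``main obstacle'' --- that permissible operations correspond to automorphisms $\hat T,\hat{T'}$ of $\upperX,\lowerX$ so that $M'$ is again a block matrix problem for a new pair $(\phi',\psi')$ --- is precisely the hinge of Subsection~\ref{subsec:normal_form_to_decompo}, and the paper handles it just as you sketch, via the composition rule for the morphisms $f\itoi{a,b,c,d}$.
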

%%% Local Variables:
%%% mode: latex
%%% TeX-master: "main"
%%% End:

\section{Interpretation of bipath persistence diagrams}\label{sec:bipathPD}
  In this section, we illustrate the decomposition of a bipath persistence module obtained from a small example of a bipath filtration of simplicial complexes.
  In addition, we visualize the bipath persistence module (see subsection~\ref{sec:bipath_pd}) and its persistence diagram
  and give an interpretation for the visualization.

%\subsection{Simple example}\label{subsec:simpleexaple} 
Consider an abstract simplicial complex $\Delta$ whose $j$-faces are given by 
$$
\Delta^0 :=
\{a,b,c,d,e\}, \ 
\Delta^1 := \{\{a,b\}, \{a,c\}, \{a,d\}, \{a,e\}, \{b,c\},\{c,d\},\{c,e\},\{d,e\}\}, \ 
\Delta^2 := \{\{a,c,e\}\}, 
$$
and a $B_{3,2}$-filtration 
\begin{equation*}
  S\colon \ 
\begin{tikzcd}[row sep=0.1em,column sep = 1.4em, inner sep=0pt]
    & S_{1} \ar[r,hookrightarrow] & S_{2} \ar[r,hookrightarrow] & S_{3} \ar[dr,hookrightarrow] & \\ 
    S_{\hat0} \ar[ur,hookrightarrow] \ar[dr,hookrightarrow] & & & & S_{\hat1} \\
    & S_{1'} \ar[rr, hookrightarrow] & & S_{2'} \ar[ur,hookrightarrow] & 
\end{tikzcd} 
\end{equation*}
for $\Delta$ defined as follows 
(This is illustrated in Figure~\ref{fig:BFSC}): 
Let $S_{\hat{0}} := \Delta^0\cup \{\{a,b\}, \{b,c\}\}$ and $S_{\hat{1}} := \Delta$. 
In addition, let 
\begin{eqnarray*}
    S_{1} &:=& S_{\hat0} \cup \{\{a,c\},\{a,d\},\{a,e\},\{d,e\}\}, \\
    S_{2} &:=& S_{1} \cup \{\{c,d\}\},  \\
    S_{3} &:=& S_{2} \cup \{\{c,e\} \}, \\
    S_{1'} &:=& S_{\hat0} \cup \{\{a,c\},\{a,e\},\{c,e\} \},\\
    S_{2'} &:=&S_{1'} \cup \{\{a,d\},\{d,e\},\{a,c,e\}\}.  
\end{eqnarray*}

\begin{figure}[th]
\begin{center}
\includegraphics[width=90mm]{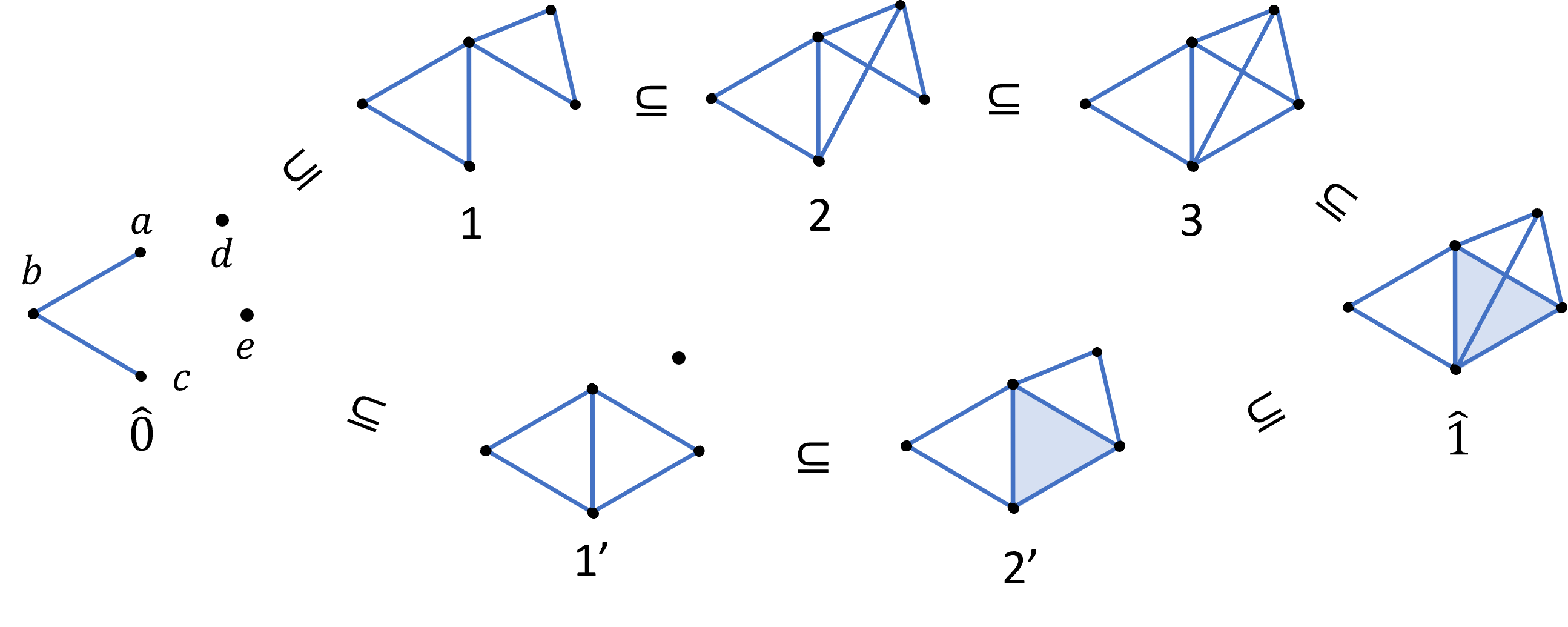}
\caption{A geometric realization of a bipath filtration $S$.}
\label{fig:BFSC}
\end{center}
\end{figure}

We apply the $q$th homology functor with coefficient field $k=\mathbb{F}_2$ (a field with two elements) to $S$ and obtain a bipath persistence module $V_q := H_q(S;\mathbb{F}_2)$ for each $q$. 
\begin{equation*} 
\begin{tikzcd}[row sep=0.1em,column sep = 1.4em, inner sep=0pt]
    & H_q(S_{1};\mathbb{F}_2) \ar[r] & H_q(S_{2};\mathbb{F}_2) \ar[r] & H_q(S_{3};\mathbb{F}_2) \ar[dr] & \\ 
    H_q(S_{\hat0};\mathbb{F}_2) \ar[ur] \ar[dr] & & & & H_q(S_{\hat1};\mathbb{F}_2). \\
    & H_q(S_{1'};\mathbb{F}_2) \ar[rr] & & H_q(S_{2'};\mathbb{F}_2) \ar[ur] & 
\end{tikzcd} 
\end{equation*}
For computing its persistence diagrams, let us decompose it into interval modules by using Algorithm~\ref{algo:main}.

We clearly have $V_q = 0$ for all $q\geq 2$. 
For $q\in \{0,1\}$, let $(V_{q})_U$ and $(V_{q})_D$ be restrictions of $V_q$ to the upper path $U$ and lower path $D$ respectively. 
Then, we have persistence modules 

\begin{equation*}
     X_q = \begin{cases}
        k_{[\hat{0},\hat{0}]}\oplus k_{[\hat{0},\hat{0}]}\oplus k_{[\hat{0},\hat{1}]} & \text{($q=0$),}\\ 
         k_{[1,\hat{1}]}\oplus k_{[1,\hat{1}]}\oplus k_{[2,\hat{1}]} \oplus k_{[3,3]}& \text{($q=1$),} 
    \end{cases}
    \ \ \text{and} \ \ 
    Y_q = \begin{cases}
        k_{[\hat{0},\hat{0}]}\oplus k_{[\hat{0},1']}\oplus k_{[\hat{0},\hat{1}]} & \text{($q=0$),} \\ 
         k_{[1',1']}\oplus k_{[1',\hat{1}]}\oplus k_{[2',\hat{1}]} \oplus k_{[\hat{1},\hat{1}]}& \text{($q=1$),} 
    \end{cases}
\end{equation*}
over $U$ and $D$ respectively, 
and a pair of isomorphisms $\phi_q \colon (V_q)_U \to X_q$ and $\psi_q \colon (V_q)_D \to Y_q$ such that 
they define the block matrix problem (Definition~\ref{defn:matrixproblem}) associated with $(\phi_q,\psi_q)$ of the following form for $q=0$ and $q=1$ respectively. 
\begin{equation*}
  \mattikz{
    \matheaderwide{
      1    \& 0 \& 0 \\
      0    \& 1 \& 0 \\
      0    \& 0 \& 1 \\
    };
    \node[anchor = south east] (p-0-0) at (p-2-1.west |- p-1-1.north west) {};
    \foreach[count = \i] \v in {
      \({[\hat{0},\hat{0}]}\),
     \({[\hat{0},1']}\),
      \({[\hat{0},\hat{1}]}\)
    }
    {
      \node[left=3.5ex] at (p-\i-1) {\v};
    }
    \foreach[count = \i] \v in {
      \({[\hat{0},\hat{0}]}\),
      \({[\hat{0},\hat{0}]}\),
      \({[\hat{0},\hat{1}]}\)
    }
    {
      \node[above=1.5ex] at (p-1-\i) {\v};
    }
  }
  \text{ \ and \ }
    \mattikz{
    \matheaderwide{
      1    \& 0 \& 0 \\
      0    \& 1 \& 1 \\
      0    \& 0 \& 1 \\
    };
    \node[anchor = south east] (p-0-0) at (p-2-1.west |- p-1-1.north west) {};
    \foreach[count = \i] \v in {
        \({[1',\hat{1}]}\),
        \({[2',\hat{1}]}\),
        \({[\hat{1},\hat{1}]}\)
    }
    {
      \node[left=3.5ex] at (p-\i-1) {\v};
    }
    \foreach[count = \i] \v in {
      \({[1,\hat{1}]}\),
      \({[1,\hat{1}]}\),
      \({[2,\hat{1}]}\)
    }
    {
      \node[above=1.5ex] at (p-1-\i) {\v};
    }
  }.
\end{equation*}
Solving this (i.e., computing its normal form) for each $q$, we obtain a collection of intervals of $B_{3,2}$ 
by Construction~\ref{construction:intervals}, 
which corresponds to the interval-decomposition of $V_q$. 
We conclude that 

\begin{equation*}
    V_q \cong \begin{cases}
        k_{B_{3,2}} \oplus 
        k_{\langle \hat{0},\hat{0}\rangle} \oplus  
        k_{\langle 1',\hat{0} \rangle} & 
        \text{($q=0$),} \\ 
        k_{\langle 1,1'\rangle} \oplus k_{\langle 1,2'\rangle} \oplus k_{\langle 2,\hat{1}\rangle} \oplus k_{\langle 3,3\rangle} \oplus k_{\langle 1',1'\rangle} & \text{($q=1$),} \\ 
        0 & \text{($q\geq 2$),} 
    \end{cases}
\end{equation*}
and 
\begin{equation}
\label{eq:persdigmS}
    \mathcal{D}_q(S) = \begin{cases}
        \left\{B_{3,2}, 
        \langle \hat{0},\hat{0} \rangle,
        \langle 1',\hat{0}\rangle \right\} & \text{($q=0$),}\\ 
        \left\{
        \langle 1,1'\rangle, 
        \langle 1,2'\rangle, 
        \langle 2,\hat{1}\rangle, 
        \langle 3,3\rangle, 
        \langle 1',1'\rangle 
        \right\} & \text{($q=1$),} \\ 
        \emptyset & \text{($q\geq 2$).}
    \end{cases}
\end{equation}
We describe the above bipath persistence diagrams in Figure~\ref{fig:0and1}.

We remark that we may use Algorithm~\ref{algo:main2} for a bipath filtration $S$ to obtain the persistence diagrams without explicitly computing $H_q(S;\mathbb{F}_2)$. 
Of course, it returns the same result as \eqref{eq:persdigmS}. 

\begin{figure}[th]
\begin{center}
\includegraphics[width=100mm]{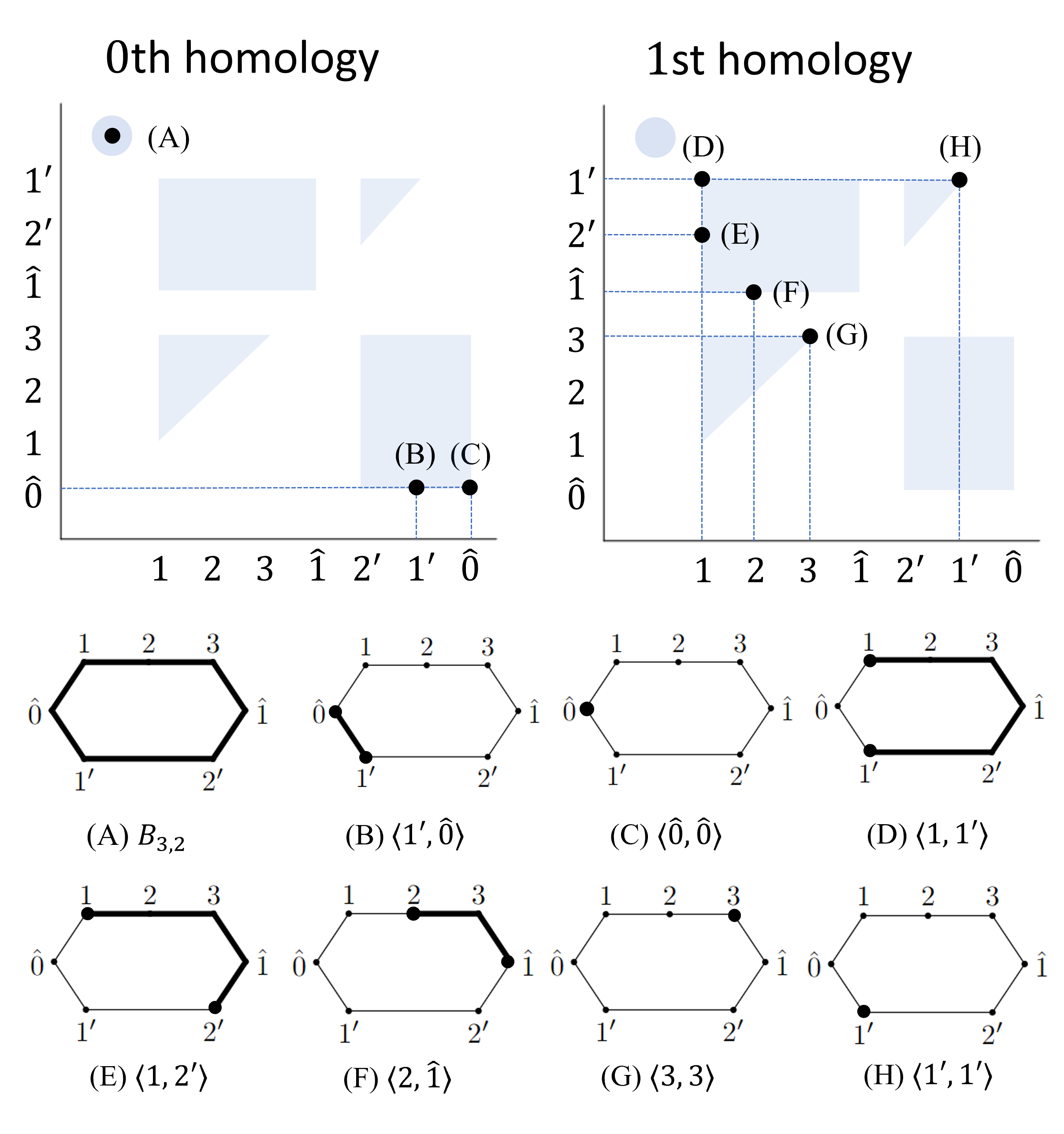}
\caption{
A visualization of $\mathcal{D}_q(S)$ for $q\in \{0,1\}$. Each point has the multiplicity one in this case.
}
\label{fig:0and1}
\end{center}
\end{figure}

Below, we give an interpretation for the persistence diagram, using some geometric intuition.
However, we warn that this is dependent on a choice of representative cycles for the homology classes,
and thus the interpretation (via representative ``holes'') is not unique, in principle.
\begin{itemize}
\item For $q=0$, every connected component is born at $\hat{0}$. 
  The interval $B_{3,2}$ corresponds to a connected component containing $a$ which is alive until the end. 
  Similarly, the interval $\langle 1',\hat{0}\rangle$ (resp., $\langle\hat{0},\hat{0}\rangle$) 
  corresponds to the connected component of $d$ (resp., $e$)
  which dies (gets connected to another connected component) at $1'$ (resp., $\hat{0}$). 
  These are illustrated by the points (A), (B), and (C) in Figure~\ref{fig:0and1}.
  
\item
  For $q=1$, the interval $\langle 1,1'\rangle$ corresponds to the hole formed by $a,b,c$,
  which is born at both $1$ (upper path) and $1'$ (lower path)
  and is alive until the end in both paths, illustrated by point (D) in Figure~\ref{fig:0and1}.  
  Similarly, the interval $\langle 1,2' \rangle$  corresponds to a hole formed by $a,d,e$, born at $1$ and $2'$
  and is alive until the end in both paths; this is point (E) in Figure~\ref{fig:0and1}. 

  The interval $\langle 2,\hat1 \rangle$  corresponds to a hole formed by $a,c,d$, which is born at $2$ (upper path)
  and $\hat1$ (lower path)  and is alive until the end $\hat1$.
  Note that it is only present in the lower path at $\hat1$. This corresponds to point (F) in Figure~\ref{fig:0and1}.
  
  Both the interval $\langle 3,3 \rangle$ (point (G) in Figure~\ref{fig:0and1})
  and the interval $\langle 1',1' \rangle$ (point (H) in Figure~\ref{fig:0and1})
  can be interpreted as
  corresponding to the hole formed by $a,c,e$,
  which is born and dies at $3$ and at $1'$ in the upper and lower path.
  Since they are not connected at the endpoints $\hat0$ and $\hat1$, these are two different intervals.
\end{itemize}

  One motivation for our introduction of bipath persistence is that it allows us to study the persistence of topological features across a pair of filtrations to compare the two filtrations.
  In the above example, the intervals (A) and (D) point to the existence of topological features
  persisting for long across both upper and lower filtrations,
  while intervals (B), (C), (G), and (H) may be seen as noise
  because of their short persistence.
  The intervals (E) and (F) correspond to topological features persisting
  for longer across the upper filtration than the lower filtration.
  This way, we can see the differences and compare topological features across two filtrations.

%%% Local Variables:
%%% mode: latex
%%% TeX-master: "main"
%%% End:

\section*{Acknowledgements}
The authors would like to thank 
  Tomoyuki Shirai for discussions concerning
  the graphical display of the bipath persistence diagrams.
This work is supported by JSPS Grant-in-Aid for Transformative Research Areas (A) (22H05105).
S.T. is supported by JST SPRING, Grant Number JPMJSP2148.

\bibliographystyle{alpha} 
\bibliography{decomposition.bib}

\newcommand{\etalchar}[1]{$^{#1}$}
\begin{thebibliography}{MMKM23}

\bibitem[AAF19]{aktas2019persistence}
Mehmet~E. Aktas, Esra Akbas, and Ahmed~El Fatmaoui.
\newblock Persistence homology of networks: methods and applications.
\newblock {\em Applied Network Science}, 4(1):1--28, 2019.

\bibitem[ABH24]{amiot2024invariants}
Claire Amiot, Thomas Brüstle, and Eric~J. Hanson.
\newblock Invariants of persistence modules defined by order-embeddings.
\newblock {\em arXiv preprint arXiv:2402.09190}, 2024.

\bibitem[AEHT19]{asashiba2019matrix}
Hideto Asashiba, Emerson~G. Escolar, Yasuaki Hiraoka, and Hiroshi Takeuchi.
\newblock Matrix method for persistence modules on commutative ladders of
  finite type.
\newblock {\em Japan Journal of Industrial and Applied Mathematics},
  36:97--130, 2019.

\bibitem[AET23]{aoki2023summand}
Toshitaka Aoki, Emerson~G. Escolar, and Shunsuke Tada.
\newblock Summand-injectivity of interval covers and monotonicity of interval
  resolution global dimensions.
\newblock {\em arXiv preprint arXiv:2308.14979}, 2023.

\bibitem[Art91]{Artin91}
Michael Artin.
\newblock {\em Algebra}.
\newblock Prentice Hall, Inc., Englewood Cliffs, NJ, 1991.

\bibitem[BCB20]{botnan2020decomposition}
Magnus Botnan and William Crawley-Boevey.
\newblock Decomposition of persistence modules.
\newblock {\em Proceedings of the American Mathematical Society},
  148(11):4581--4596, 2020.

\bibitem[BD13]{burghelea2011persistence}
Dan Burghelea and Tamal~K Dey.
\newblock Persistence for circle valued maps.
\newblock {\em Discrete \& Computational Geometry}, 50:69–98, 2013.

\bibitem[BL23]{botnan2022introduction}
Magnus~Bakke Botnan and Michael Lesnick.
\newblock An introduction to multiparameter persistence.
\newblock {\em Representations of Algebras and Related Structures}, pages
  77--150, 2023.

\bibitem[BS14]{bubenik2014categorification}
Peter Bubenik and Jonathan~A. Scott.
\newblock Categorification of persistent homology.
\newblock {\em Discrete \& Computational Geometry}, 51(3):600--627, 2014.

\bibitem[BSZ09]{bautista2009differential}
Raymundo Bautista, Leonardo Salmer{\'o}n, and Rita Zuazua.
\newblock {\em Differential tensor algebras and their module categories},
  volume 362.
\newblock Cambridge University Press, 2009.

\bibitem[CB88]{crawley1988tame}
W.~W. Crawley-Boevey.
\newblock On tame algebras and bocses.
\newblock {\em Proceedings of the London Mathematical Society}, 3(3):451--483,
  1988.

\bibitem[CCR13]{carlsson2013evolution}
Joseph~Minhow Chan, Gunnar Carlsson, and Raul Rabadan.
\newblock Topology of viral evolution.
\newblock {\em Proceedings of the National Academy of Sciences},
  110(46):18566--18571, 2013.

\bibitem[CDS10]{carlsson2010zigzag}
Gunnar Carlsson and Vin De~Silva.
\newblock Zigzag persistence.
\newblock {\em Foundations of computational mathematics}, 10:367--405, 2010.

\bibitem[CDSM09]{carlsson2009zigzag}
Gunnar Carlsson, Vin De~Silva, and Dmitriy Morozov.
\newblock Zigzag persistent homology and real-valued functions.
\newblock In {\em Proceedings of the twenty-fifth annual symposium on
  Computational geometry}, pages 247--256, 2009.

\bibitem[CSGO13]{chazal2013structure}
Frederic Chazal, Vin~De Silva, Marc Glisse, and Steve Oudot.
\newblock The structure and stability of persistence modules, 2013.

\bibitem[CZ05]{carlsson2005computing}
Gunnar Carlsson and Afra Zomorodian.
\newblock Computing persistent homology.
\newblock {\em Discrete \& Computational Geometry}, 33(2):249--274, 2005.

\bibitem[CZ09]{carlsson2009theory}
Gunnar Carlsson and Afra Zomorodian.
\newblock The theory of multidimensional persistence.
\newblock {\em Discrete \& Computational Geometry}, 42(1):71--93, 2009.

\bibitem[EH16]{escolar2016persistence}
Emerson~G. Escolar and Yasuaki Hiraoka.
\newblock Persistence modules on commutative ladders of finite type.
\newblock {\em Discrete \& Computational Geometry}, 55(1):100--157, 2016.

\bibitem[ELZ02]{edelsbrunner2002topological}
Herbert Edelsbrunner, David Letscher, and Afra Zomorodian.
\newblock Topological persistence and simplification.
\newblock {\em Discrete \& Computational Geometry}, 28:511--533, 2002.

\bibitem[Erd06]{erdmann2006blocks}
Karin Erdmann.
\newblock {\em Blocks of tame representation type and related algebras}, volume
  1428.
\newblock Springer, 2006.

\bibitem[Gab72]{Gabriel72}
Peter Gabriel.
\newblock Unzerlegbare {D}arstellungen. {I}.
\newblock {\em Manuscripta Math.}, 6:71--103; correction, ibid. 6 (1972), 309,
  1972.

\bibitem[GR92]{gabriel1992representations}
Peter Gabriel and Andrei~V. Roiter.
\newblock {\em Representations of finite-dimensional algebras}, volume~73.
\newblock Springer Science \& Business Media, 1992.

\bibitem[HNH{\etalchar{+}}16]{hiraoka2016hierarchical}
Yasuaki Hiraoka, Takenobu Nakamura, Akihiko Hirata, Emerson~G. Escolar, Kaname
  Matsue, and Yasumasa Nishiura.
\newblock Hierarchical structures of amorphous solids characterized by
  persistent homology.
\newblock {\em Proceedings of the National Academy of Sciences},
  113(26):7035--7040, 2016.

\bibitem[MMKM23]{myers2023temporal}
Audun Myers, David Mu{\~n}oz, Firas~A. Khasawneh, and Elizabeth Munch.
\newblock Temporal network analysis using zigzag persistence.
\newblock {\em EPJ Data Science}, 12(1):6, 2023.

\bibitem[MNR{\etalchar{+}}23]{mcdonald2023topological}
Robert~A. McDonald, Rosanna Neuhausler, Martin Robinson, Laurel~G. Larsen,
  Heather~A. Harrington, and Maria Bruna.
\newblock Topological descriptors for coral reef resilience using a stochastic
  spatial model.
\newblock {\em Journal of the Royal Society, Interface}, 20(205), 2023.

\bibitem[OPT{\etalchar{+}}17]{otter2017roadmap}
Nina Otter, Mason~A. Porter, Ulrike Tillmann, Peter Grindrod, and Heather~A.
  Harrington.
\newblock A roadmap for the computation of persistent homology.
\newblock {\em EPJ Data Science}, 6:1--38, 2017.

\bibitem[Oud17]{oudot2017persistence}
Steve~Y Oudot.
\newblock {\em Persistence theory: from quiver representations to data
  analysis}, volume 209.
\newblock American Mathematical Soc., 2017.

\bibitem[Roj06]{rojter2006matrix}
AV~Rojter.
\newblock Matrix problems and representations of bocs's.
\newblock In {\em Representation Theory I: Proceedings of the Workshop on the
  Present Trends in Representation Theory, Ottawa, Carleton University, August
  13--18, 1979}, pages 288--324. Springer, 2006.

\bibitem[Sou11]{sousbie1_2011persistent}
Thierry Sousbie.
\newblock The persistent cosmic web and its filamentary structure--{I}. theory
  and implementation.
\newblock {\em Monthly Notices of the Royal Astronomical Society},
  414(1):350--383, 2011.

\bibitem[SPK11]{sousbie2_2011persistent}
Thierry Sousbie, Christophe Pichon, and Hajime Kawahara.
\newblock The persistent cosmic web and its filamentary structure--{II}.
  illustrations.
\newblock {\em Monthly Notices of the Royal Astronomical Society},
  414(1):384--403, 2011.

\bibitem[TC11]{tausz2011applications}
Andrew Tausz and Gunnar Carlsson.
\newblock Applications of zigzag persistence to topological data analysis.
\newblock {\em arXiv preprint arXiv:1108.3545}, 2011.

\end{thebibliography}

\end{document}